\numberwithin{equation}{section}
\theoremstyle{plain}
\newtheorem{Lemma}{Lemma}[section]
\newtheorem{Proposition}[Lemma]{Proposition}
\newtheorem{Theorem}[Lemma]{Theorem}
\newtheorem{Corollary}[Lemma]{Corollary}
\theoremstyle{definition}
\newtheorem{Definition}[Lemma]{Definition}
\newtheorem{Example}[Lemma]{Example}
\newtheorem{Remark}[Lemma]{Remark}
\def\oe{\"{o}}
\def\B{{\rm B}}
\def\R{{\mathbb{R}}}
\def\N{{\mathbb{N}}}
\DeclareMathOperator*{\esssup}{ess\,\sup}
\begin{document}
\title{\textbf{Stability, uniqueness and existence of solutions to McKean-Vlasov SDEs in arbitrary moments}}
\author{Alexander Kalinin\footnote{Department of Mathematics, LMU Munich, Germany. {\tt alex.kalinin@mail.de}} \and Thilo Meyer-Brandis\setcounter{footnote}{6}\footnote{Department of Mathematics, LMU Munich, Germany. {\tt meyerbra@math.lmu.de}} \and Frank Proske\setcounter{footnote}{3}\footnote{Department of Mathematics, University of Oslo, Norway. {\tt proske@math.uio.no}}}
\maketitle

\begin{abstract}
We deduce stability and pathwise uniqueness for a McKean-Vlasov equation with random coefficients and a multidimensional Brownian motion as driver. Our analysis focuses on a non-Lipschitz drift coefficient and includes moment estimates for random It{\^o} processes that are of independent interest. For deterministic coefficients we provide unique strong solutions, even if the drift fails to be of affine growth. The theory that we develop rests on It{\^o}'s formula and leads to $p$-th moment and pathwise $\alpha$-exponential stability for $p\geq 2$ and $\alpha > 0$ with explicit Lyapunov exponents, regardless of whether a Lyapunov function exists.
\end{abstract}

\noindent
{\bf MSC2010 classification:} 60H20, 60H30, 60F25, 37H30, 45M10.\\
{\bf Keywords:}  McKean-Vlasov equation, stability, Lyapunov exponent, moment estimate,  limit theorem, asymptotic behaviour, pathwise uniqueness, strong solution, non-Lipschitz drift coefficient, It{\^o} process.

\section{Introduction}\label{se:1}

The study of McKean-Vlasov stochastic differential equations (McKean-Vlasov SDEs), also called mean-field SDEs, was initiated by~Kac~\cite{Kac56}, McKean~\cite{McK66} and Vlasov~\cite{Vla68}. Since then, these integral equations gained much interest in a variety of fields such as physics, economics, finance and mathematics. A crucial reason for this is the fact they can be used to model the propagation of chaos of interacting particles in a
plasma, as shown in~\cite{Vla68}. In the context of applications of these types, questions regarding stability, uniqueness and existence of solutions arise. To give precise answers under verifiable conditions, we continue our analysis of the companion paper~\cite{KalMeyPro21} and present new methods.

Let $d,m\in\N$ and $(\Omega,\mathscr{F},(\mathscr{F}_{t})_{t\in\R_{+}},P)$ be a filtered probability space that satisfies the usual conditions and on which there is a standard $d$-dimensional $(\mathscr{F}_{t})_{t\in\R_{+}}$-Brownian motion $W$. Then a McKean-Vlasov equation can be written in the form
\begin{equation}\label{eq:initial McKean-Vlasov}
X_{t} = X_{0} + \int_{0}^{t}b(s,X_{s},P_{X_{s}})\,ds + \int_{0}^{t}\sigma(s,X_{s},P_{X_{s}})\,dW_{s}\quad\text{for $t\in\R_{+}$ a.s.}
\end{equation}
Thereby, $b$ and $\sigma$ are two measurable maps on $\R_{+}\times\R^{m}\times\mathscr{P}(\R^{m})$ with values in $\R^{m}$ and $\R^{m\times d}$, respectively, $P_{X}$ denotes the law map of the solution process $X$ and, as usually, $\mathscr{P}(\R^{m})$ stands for the convex space of all Borel probability measures on $\R^{m}$.

The theory of mean-field SDEs has in fact undergone groundbreaking developments since the works of~\cite{Kac56},~\cite{McK66} and~\cite{Vla68} and proven to be an
indispensable mathematical tool. For instance, another important application in physics pertains to the
analysis of incompressible Navier-Stokes equations that were considered in the classical work~\cite{Ler34} by Leray and which have a deep link with mean-field SDEs, according to~Constantin and Iyer~\cite{ConIye08}. Recently, this connection was further explored by R\oe ckner and Zhao~\cite{RoeZha21}.

Moreover, mean-field games, studied by Lasry and Lions~\cite{LasLio07}, serve as applications in economics and may be used to explain the interaction and behaviour of agents in a vast network. For other
works related to mean-field games we refer to~\cite{CarDel13},~\cite{CarDel14},~\cite{CarDelLac16},~\cite{CarDel18I} and~\cite{CarDel18II}. Regarding applications in finance, see~\cite{CarFouMouSun18},~\cite{CarFouSun15},~\cite{FouIch13},~\cite{GarPapYan13} and \cite{KleKluRei15} in connection with systemic risk modelling.

Over the years, mean-field SDEs were studied from a mathematical point of view under various assumptions on the type of noise and the regularity of the coefficients. While the authors in~\cite{JouMelWoy08} consider L\'{e}vy noise, results on unique strong solutions, based on additive Gaussian noise and a discontinuous drift, were established in~\cite{BauMeyPro18},~\cite{BauMey19-1} and~\cite{BauMey19-2}, by using Malliavin calculus. Results on weak solutions can be found in~\cite{Jou97},~\cite{Chi94} and~\cite{MisVer20}. In~\cite{LiMin16} even path-dependent coefficient are treated. It is worth pointing out that mean-field equations of backward type were considered in~\cite{BucDjeLiPen09} and~\cite{BucLiPen09}, and infinite-dimensional partial differential equation (PDEs) related to mean-field SDEs were derived in~\cite{BucLiPenRai17}.

In the sequel, let $t_{0}\in\R_{+}$ and $\mathscr{P}$ be a separable metrisable topological space in $\mathscr{P}(\R^{m})$. For $p\in [2,\infty)$ the Polish space $\mathscr{P}_{p}(\R^{m})$ of all measures in $\mathscr{P}(\R^{m})$ with a finite $p$-th moment, endowed with the $p$-th Wasserstein metric, serves as our main application. Further, assume that along with $\mathscr{P}$ the maps
\begin{equation*}
\B:[t_{0},\infty)\times\Omega\times\R^{m}\times\mathscr{P}\rightarrow\R^{m}\quad\text{and}\quad \Sigma:[t_{0},\infty)\times\Omega\times\R^{m}\times\mathscr{P}\rightarrow\R^{m\times d}
\end{equation*}
are admissible in a suitable measurable sense, as explained in Section~\ref{se:2}. We shall focus on the following \emph{generalised McKean-Vlasov SDE} with such \emph{random drift} and \emph{diffusion coefficients}:
\begin{equation}\label{eq:McKean-Vlasov}
dX_{t} = \B_{t}(t,X_{t},P_{X_{t}})\,dt + \Sigma_{t}(X_{t},P_{X_{t}})\,dW_{t}\quad\text{for $t\in [t_{0},\infty)$.}
\end{equation}
In particular, if $\Sigma$ does not depend on the measure variable $\mu\in\mathscr{P}$, then we recover the setting of the previous work~\cite{KalMeyPro21}, in which a multidimensional Yamada-Watanabe approach was developed.

The objective of this paper is to deduce stability, uniqueness and existence of solutions to~\eqref{eq:McKean-Vlasov}, by presenting methods that handle the dependence of the diffusion coefficient with respect to the measure variable and, at the same time, allow for an irregular drift. Essentially, our contributions to the existing literature can be listed as follows:
\begin{enumerate}[(1)]
\item \emph{Pathwise uniqueness} for~\eqref{eq:McKean-Vlasov} is concisely shown in Corollary~\ref{co:pathwise uniqueness} if $\B$ and $\Sigma$ satisfy an Osgood condition that is only partially restrictive for $\B$. In the specific case that both coefficients are independent of $\mu\in\mathscr{P}$, turning~\eqref{eq:McKean-Vlasov} into an SDE, this condition is required on compact sets only.
\item As a consequence of the \emph{explicit $L^{p}$-comparison estimate} for~\eqref{eq:McKean-Vlasov} in Proposition~\ref{pr:specific stability moment estimate}, we obtain \emph{(asymptotic) $p$-th moment stability} in Corollary~\ref{co:moment stability} under partial and complete mixed continuity H\oe lder conditions on $\B$ and $\Sigma$, respectively, and transparent integrability hypotheses on the random partial H\oe lder coefficients with respect to $(x,\mu)\in\R^{m}\times\mathscr{P}$.
\item \emph{Exponential $p$-th moment stability} follows from Corollary~\ref{co:exponential moment stability} if partial and complete Lipschitz conditions hold for $\B$ and $\Sigma$, respectively, and the stability factor in~\eqref{eq:specific stability coefficient 2}, which can be viewed as functional of the partial Lipschitz coefficients, does not exceed a sum of power functions. The crucial fact is that this result provides \emph{explicit $p$-th moment Lyapunov exponents}.
\item \emph{Pathwise exponential stability} is shown in Corollary~\ref{co:pathwise stability} under the just mentioned Lipschitz conditions on $\B$ and $\Sigma$ with deterministic partial Lipschitz coefficients of certain growth and the same bound involving a sum of power functions for the stability factor $\gamma_{pq,\mathscr{P}}$ in~\eqref{eq:specific stability coefficient 3}, where $q\in [2,\infty)$. Then the \emph{pathwise Lyapunov exponent agrees with the $pq$-th moment Lyapunov exponent divided by $pq$}.
\item As the companion paper~\cite{KalMeyPro21}, this work demonstrates that for a detailed stability analysis of McKean-Vlasov equations verifiable assumptions can be given and the existence of Lyapunov functions does not have to be taken for granted.
\item \emph{Unique strong solutions} are derived in Theorem~\ref{th:strong existence} under the hypotheses that $\B$ and $\Sigma$ are deterministic, an Osgood growth or an affine growth estimate that is only partially restrictive for $\B$ holds and partial and complete Lipschitz conditions are satisfied by $\B$ and $\Sigma$, respectively. Thus, $\B$ is not forced to be of affine growth or Lipschitz continuous relative to $(x,\mu)\in\R^{m}\times\mathscr{P}$, as justified in Example~\ref{ex:deterministic integral maps} involving integral maps.
\end{enumerate}

We note that the contributions~(1),~(4) and~(6) are comparable to those in~\cite{KalMeyPro21} if we assume for the moment that $\Sigma$ is independent of $\mu\in\mathscr{P}$. In this case, the \emph{pathwise uniqueness} assertions of Corollary~3.9 in~\cite{KalMeyPro21} are applicable when $\Sigma$ merely satisfies an Osgood condition on compact sets with random regularity coefficients relative to a random basis of $\R^{m}$. Thereby, in contrast to Corollary~\ref{co:pathwise uniqueness} in this article, $\B$ is ought to satisfy \emph{a partial Osgood condition that depends on the choice of this basis}.

The \emph{pathwise exponential stability} statements of Corollary~3.19 in~\cite{KalMeyPro21} require $\B$ and $\Sigma$ to satisfy a partial Lipschitz condition and an $1/2$-H\oe lder condition in terms of a random basis, respectively. Then the same bound involving a sum of power functions as in this paper is imposed on the stability factor $\gamma_{\mathscr{P}}$ given by (3.11) in~\cite{KalMeyPro21}. This entails that one half of the first moment Lyapunov exponent, studied in Corollary~3.16 there, serves as pathwise Lyapunov exponent. We emphasize that while $\gamma_{\mathscr{P}}$ is merely influenced by the regularity of $\B$, the relevant stability factor $\gamma_{pq,\mathscr{P}}$ in~\eqref{eq:specific stability coefficient 3} for Corollary~\ref{co:pathwise stability} in this work, where $q\in [2,\infty)$, is based on the partial Lipschitz coefficients stemming from partial and complete Lipschitz conditions for $\B$ and $\Sigma$, respectively.

Moreover, for the \emph{strong existence and uniqueness result} in~\cite{KalMeyPro21}, Theorem~3.27, to hold, it is assumed that $\B$ and $\Sigma$ are deterministic, $\B$ satisfies a partial affine growth and a partial Lipschitz condition relative to a random basis and $\Sigma$ vanishes at the origin at all times and satisfies the same Osgood condition on compact sets, as mentioned before. In Theorem~\ref{th:strong existence} of this paper, however, the partial affine growth and partial Lipschitz conditions on $\B$ are less restrictive, and $\Sigma$ is supposed to satisfy an affine growth and a Lipschitz condition instead.
\smallskip

This work is structured as follows. In Section~\ref{se:2} the setting and the notation of our paper are introduced. In this context, we recall and extend several concepts from~\cite{KalMeyPro21} that are related to the generalised McKean-Vlasov equation~\eqref{eq:McKean-Vlasov}.

The main results are formulated in Section~\ref{se:3}. In detail, Section~\ref{se:3.1} gives a quantitative second moment estimate for the difference of two solutions from which pathwise uniqueness follows. In Section~\ref{se:3.2} we compare solutions in arbitrary moments, which in turn leads to standard, asymptotic and exponential stability in $p$-th moment. Then, Section~\ref{se:3.3} deals with pathwise stability and $L^{p}$-growth estimates. By combining these results, a strong existence and uniqueness result is stated in Section~\ref{se:3.4}. Thereby, all results are illustrated by a variety of examples involving integral maps.

Finally, Section~\ref{se:4} derives moment and pathwise asymptotic estimates for random It{\^o} processes, from which our main results will be inferred in the proofs that can be found in Section~\ref{se:5}.

\section{Preliminaries}\label{se:2}

In what follows, for any real-valued monotone function $f$ on an interval $I$ in $\R$ we define $f(a) := \lim_{v\downarrow a} f(v)$ for $a:=\inf I$, if $a\notin I$, and $f(b):=\lim_{v\uparrow b} f(v)$ for $b:=\sup I$, if $b\notin I$. Moreover, we use $|\cdot|$ as absolute value function, Euclidean norm or Hilbert-Schmidt norm and denote the transpose of any matrix $A\in\R^{m\times d}$ by $A'$.

\subsection{Processes with locally integrable elements}\label{se:2.1}

For $p\in [1,\infty)$ we introduce a pseudonorm $[\cdot]_{p}$ on the convex cone $\mathscr{L}_{+}^{p}(\Omega,\mathscr{F},P)$ of all random variables $X$ with $E[(X^{+})^{p}] < \infty$ and a  sublinear functional $[\cdot]_{\infty}$ on the convex cone $\mathscr{L}_{+}^{\infty}(\Omega,\mathscr{F},P)$ of all random variables $X$ with $X^{+} \leq c$ a.s.~for some $c\in\R_{+}$ by
\begin{equation}\label{eq:essential functionals}
\big[X\big]_{p} := E\big[(X^{+}\big)^{p}\big]^{\frac{1}{p}}\quad\text{and}\quad \big[X\big]_{\infty} := \esssup X.
\end{equation}
Then the inequalities of H\oe lder and Young show that any $\beta,\gamma\in [0,p]$ with $\beta\leq\gamma$, each $X\in\mathscr{L}_{+}^{p/\gamma}(\Omega,\mathscr{F},P)$, every $\R_{+}$-valued random variable $Y$ and all $x\in\R_{+}$ satisfy
\begin{equation}
\begin{split}\label{eq:essential inequality}
p x E\big[XY^{p-\gamma}]E\big[Y^{p}\big]^{\frac{\beta}{p}} &\leq p x E\big[X\big]_{\frac{p}{\gamma}}E\big[Y^{p}\big]^{1 - \frac{\gamma - \beta}{p}}\\
&\leq \big[X\big]_{\frac{p}{\gamma}}\big((\gamma - \beta)x^{\frac{p}{\gamma-\beta}} + (p + \beta - \gamma)E\big[Y^{p}\big]\big)
\end{split}
\end{equation}
provided $x=1$ if $\beta=\gamma$, since $1^{\infty} = \lim_{q\uparrow\infty} 1^{q} = 1$. By means of these bounds, we derive the quantitative $L^{p}$-estimate of Theorem~\ref{th:stability moment estimate} for $p\geq 2$, on which our main results rely. To allow for infinite values, let us generalise the definitions of $[\cdot]_{p}$ and $[\cdot]_{\infty}$ in~\eqref{eq:essential functionals} to any random variable $X$. 

Further, let $\mathscr{L}_{loc}^{p}(\R^{m\times d})$ denote the linear space of all $\R^{m\times d}$-valued Borel measurable locally $p$-fold integrable maps on $[t_{0},\infty)$ and $\mathscr{L}_{loc}^{p}(\R_{+}^{m\times d})$ stand for the convex cone of all maps in $\mathscr{L}_{loc}^{p}(\R^{m\times d})$ with non-negative coordinates.

The linear space of all $\R^{m\times d}$-valued $(\mathscr{F}_{t})_{t\in [t_{0},\infty)}$-progressively measurable processes on $[t_{0},\infty)\times\Omega$ is denoted by $\mathscr{S}(\R^{m\times d})$ and for $p\in [1,\infty]$ we let $\mathscr{S}_{loc}^{p}(\R^{m\times d})$ be the subspace of all $\kappa\in\mathscr{S}(\R^{m\times d})$ with locally integrable paths satisfying
\begin{equation*}
\int_{t_{0}}^{t}\big| \big[\kappa_{s}^{(i,j)}\big]_{p}\big|\,ds < \infty\quad\text{for all $t\in [t_{0},\infty)$}
\end{equation*}
and any $(i,j)\in\{1,\dots,m\}\times\{1,\dots,d\}$, where $\kappa^{(i,j)}$ is the $(i,j)$-entry of $\kappa$. Similarly, let $\mathscr{S}_{2,\mathrm{loc}}^{p}(\R^{m\times d})$ be the linear space of all $\kappa\in\mathscr{S}(\R^{m\times d})$ with locally square-integrable paths for which $[\kappa^{(i,j)}]_{p}$ is locally square-integrable for all $(i,j)\in\{1,\dots,m\}\times\{1,\dots,d\}$.

For instance, let $X$ be an $\mathscr{F}_{t_{0}}$-measurable random variable and $\hat{\kappa}$ be a measurable map on $[t_{0},\infty)$ with values in $\R_{+}^{m\times d}$. Then $\kappa\in\mathscr{S}(\R^{m\times d})$ defined by $\kappa_{s}(\omega):=\hat{\kappa}(s)X(\omega)$ lies in $\mathscr{S}_{2,\mathrm{loc}}^{p}(\R^{m\times d})$ if and only if $X\in\mathscr{L}_{+}^{p}(\Omega,\mathscr{F},P)$ and $\hat{\kappa}\in\mathscr{L}_{loc}^{2}(\R^{m\times d})$.

Finally, let $\mathscr{S}(\R_{+}^{m\times d})$ and $\mathscr{S}_{loc}^{p}(\R_{+}^{m\times d})$ stand for the convex cones of all processes in $\mathscr{S}(\R^{m\times d})$ and $\mathscr{S}_{loc}^{p}(\R^{m\times d})$ with non-negative entries, respectively. These concepts lead to sharp $L^{p}$-estimates for $p\in [2,\infty)$, as will be shown.

\subsection{Admissible spaces of probability measures and notions of solutions and stability}\label{se:2.2}

From now on, we let $\mathscr{P}$ be a separable metrisable topological space in $\mathscr{P}(\R^{m})$ that is \emph{admissible} in the sense of Definition~2.1 in~\cite{KalMeyPro21}. That is, for every metric space $S$, each probability space $(\tilde{\Omega},\tilde{\mathscr{F}},\tilde{P})$ and any continuous process $X:S\times\tilde{\Omega}\rightarrow\R^{m}$ satisfying $\tilde{P}_{X_{s}}\in\mathscr{P}$ for any $s\in S$, the map
\[
S\rightarrow\mathscr{P},\quad s\mapsto\tilde{P}_{X_{s}}
\]
is Borel measurable. Thereby, we write $\tilde{P}_{\tilde{X}}$ for the law $\tilde{P}\circ\tilde{X}^{-1}$ of any random vector $\tilde{X}:\tilde{\Omega}\rightarrow\R^{m}$ under $\tilde{P}$. Sufficient conditions for a metrisable topological space in $\mathscr{P}(\R^{m})$ to be admissible and examples of such spaces are given in Section~2.2 in~\cite{KalMeyPro21}.

In particular, our main application is included. Namely, for $p\in [1,\infty)$ the Polish space $\mathscr{P}_{p}(\R^{m})$ of all $\mu\in\mathscr{P}(\R^{m})$ admitting a finite $p$-th moment $\int_{\R^{m}}|x|^{p}\,\mu(dx)$, endowed with the \emph{$p$-th Wasserstein metric} defined via
\begin{equation}\label{eq:Wasserstein metric}
\nu_{p}(\mu,\nu):= \bigg(\inf_{\theta\in\mathscr{P}(\mu,\nu)} \int_{\R^{m}\times\R^{m}} |x-y|^{p}\,\theta(dx,dy)\bigg)^{\frac{1}{p}}
\end{equation}
is admissible. In this context, $\mathscr{P}(\mu,\nu)$ denotes the convex set of all Borel probability measures $\theta$ on $\R^{m}\times\R^{m}$ with first and second marginal distributions $\mu$ and $\nu$, respectively, for any $\mu,\nu\in\mathscr{P}(\R^{m})$.

Next, let $\mathscr{A}$ represent the progressive $\sigma$-field, consisting of all sets $A$ in $[t_{0},\infty)\times\Omega$ for which $\mathbbm{1}_{A}$ is progressively measurable. Then we shall call a map
\begin{equation*}
F:[t_{0},\infty)\times\Omega\times\R^{m}\times\mathscr{P}\rightarrow\R^{m\times d},\quad (s,\omega,x,\mu)\mapsto F_{s}(x,\mu)(\omega)
\end{equation*}
\emph{admissible} if it is $\mathscr{A}\otimes\mathscr{B}(\R^{m})\otimes\mathscr{B}(\mathscr{P})$-measurable. This property, which we also considered in Section~2.3 in~\cite{KalMeyPro21}, ensures that for each $X\in\mathscr{S}(\R^{m})$ and every Borel measurable map $\mu:[t_{0},\infty)\rightarrow\mathscr{P}$, the process
\begin{equation*}
[t_{0},\infty)\times\Omega\rightarrow\R^{m\times d},\quad (s,\omega)\mapsto F_{s}(X_{s}(\omega),\mu(s))(\omega)
\end{equation*}
is progressively measurable. Here and subsequently, we assume that the drift $\B$ and the diffusion $\Sigma$ of the McKean-Vlasov equation~\eqref{eq:McKean-Vlasov} are admissible, as described.

\begin{Definition}\label{de:solution}
A \emph{solution} to~\eqref{eq:McKean-Vlasov} is an $\R^{m}$-valued adapted continuous process $X$ such that $P_{X}$ is $\mathscr{P}$-valued, $\int_{t_{0}}^{t}|\B_{s}(X_{s},P_{X_{s}})| + |\Sigma_{s}(X_{s},P_{X_{s}})|^{2}\,ds < \infty$ for each $t\in [t_{0},\infty)$ and
\begin{equation*}
X_{t} = X_{t_{0}} + \int_{t_{0}}^{t}\B_{s}(X_{s},P_{X_{s}})\,ds + \int_{t_{0}}^{t}\Sigma_{s}(X_{s},P_{X_{s}})\,dW_{s}\quad\text{for all $t\in [t_{0},\infty)$ a.s.}
\end{equation*}
\end{Definition}

We readily check that for $\B$ and $\Sigma$ to be deterministic, it is necessary and sufficient that there are two Borel measurable maps $b$ and $\sigma$ on $[t_{0},\infty)\times\R^{m}\times\mathscr{P}$ with values in $\R^{m}$ and $\R^{m\times d}$, respectively, such that
\begin{equation}\label{eq:deterministic case}
\B_{s}(x,\mu) = b(s,x,\mu)\quad\text{and}\quad\Sigma_{s}(x,\mu) = \sigma(s,x,\mu)
\end{equation}
for all $(s,x,\mu)\in [t_{0},\infty)\times\R^{m}\times\mathscr{P}$. In this deterministic setting we may introduce \emph{weak and strong solutions} and write~\eqref{eq:McKean-Vlasov} formally as follows:
\begin{equation}\label{eq:deterministic McKean-Vlasov}
dX_{t} = b(t,X_{t},P_{X_{t}})\,dt + \sigma(t,X_{t},P_{X_{t}})\,dW_{t}\quad\text{for $t\in [t_{0},\infty)$}.
\end{equation}
Namely, any $\R^{m}$-valued $\mathscr{F}_{t_{0}}$-measurable random vector $\xi$ and the Brownian motion $W$ induce a filtration by $\null_{\xi}\mathscr{E}_{t}^{0}:=\sigma(\xi)\vee\sigma(W_{s} - W_{t_{0}}:s\in [t_{0},t])$ for all $t\in [t_{0},\infty)$. Then a solution $X$ to~\eqref{eq:deterministic McKean-Vlasov} with $X_{t_{0}} = \xi$ a.s.~is \emph{strong} if it is adapted to the right-continuous filtration of the augmented filtration of $(\null_{\xi}\mathscr{E}_{t}^{0})_{t\in [t_{0},\infty)}$.

A \emph{weak solution} is an $\R^{m}$-valued adapted continuous process $X$ defined on some filtered probability space $(\tilde{\Omega},\tilde{\mathscr{F}},(\tilde{\mathscr{F}}_{t})_{t\in\R_{+}},\tilde{P})$ that satisfies the usual conditions and on which there is a standard $d$-dimensional $(\tilde{\mathscr{F}}_{t})_{t\in\R_{+}}$-Brownian motion $\tilde{W}$ such that
\begin{equation*}
\tilde{P}_{X}\in\mathscr{P},\quad\int_{t_{0}}^{t}|b(s,X_{s},\tilde{P}_{X_{s}})| + |\sigma(s,X_{s},\tilde{P}_{X_{s}})|^{2}\,ds < \infty\quad\text{for any $t\in [t_{0},\infty)$}
\end{equation*}
and $X_{t} = X_{t_{0}} + \int_{t_{0}}^{t}b(s,X_{s},\tilde{P}_{X_{s}})\,ds + \int_{t_{0}}^{t}\sigma(s,X_{s},\tilde{P}_{X_{s}})\,d\tilde{W}_{s}$ for all $t\in [t_{0},\infty)$ a.s. In such a case, we shall say that $X$ solves~\eqref{eq:deterministic McKean-Vlasov} weakly on $(\tilde{\Omega},\tilde{\mathscr{F}},(\tilde{\mathscr{F}}_{t})_{t\in\R_{+}},\tilde{P})$ relative to $\tilde{W}$.

In general, we measure the regularity of the random coefficients $\B$ and $\Sigma$ with respect to the variable $\mu\in\mathscr{P}$ by means of an $\R_{+}$-valued Borel measurable functional $\vartheta$ on $\mathscr{P}\times\mathscr{P}$ and for $p\in [1,\infty)$ we assume that there is $c_{p,\mathscr{P}} > 0$ such that
\begin{equation}\label{eq:domination condition}
\vartheta(P_{X},P_{\tilde{X}}) \leq c_{p,\mathscr{P}}E\big[|X - \tilde{X}|^{p}\big]^{\frac{1}{p}}
\end{equation}
for any two $\R^{m}$-valued random vectors $X, \tilde{X}$ satisfying $P_{X},P_{\tilde{X}}\in\mathscr{P}$. For example, this condition is satisfied if $\vartheta$ is \emph{dominated} by the $p$-th Wasserstein metric $\vartheta_{p}$, given by~\eqref{eq:Wasserstein metric}, as follows:
\begin{equation}\label{eq:general domination condition}
\vartheta(\mu,\nu) \leq c_{p,\mathscr{P}}\vartheta_{p}(\mu,\nu)
\end{equation}
for any $\mu,\nu\in\mathscr{P}$. Thereby, we extend the definition of $\nu_{p}(\mu,\nu)$ for all $\mu,\nu\in\mathscr{P}(\R^{m})$, by allowing infinite values. Note that if in addition $\mathscr{P}$ lies in $\mathscr{P}_{p}(\R^{m})$ and satisfies
\begin{equation*}
\mu\circ\varphi^{-1}\in\mathscr{P}\quad\text{for all $\mu\in\mathscr{P}$}
\end{equation*}
and any $\R^{m}$-valued bounded uniformly continuous map $\varphi$ on $\R^{m}$ with $|\varphi| \leq |\cdot|$ such that $\vartheta$ is a pseudometric generating its topology, then $\mathscr{P}$ is automatically admissible, by Corollary~2.4 in~\cite{KalMeyPro21}.

\begin{Example}
Suppose that $\phi:\R\cup\{-\infty\}\rightarrow\R_{+}$ and $\varphi:\R^{m}\times\R^{m}\rightarrow\R$ are measurable, $\rho:\R_{+}\rightarrow\R_{+}$ is increasing and there is $c > 0$ such that
\begin{equation*}
\varphi(x,y) \leq \rho(|x-y|)\quad\text{and}\quad \rho(v + w)/c \leq \rho(v) + \rho(w)
\end{equation*}
for any $x,y\in\R^{m}$ and all $v,w\in\R_{+}$. Under the condition that $\mathscr{P}$ is included in the set $\mathscr{P}_{\rho}(\R^{m})$ of all $\mu\in\mathscr{P}(\R^{m})$ for which $\int_{\R^{m}}\rho(|x|)\,\mu(dx)$ is finite, we may take
\begin{equation*}
\vartheta(\mu,\nu) = \phi\bigg(\inf_{\theta\in\mathscr{P}(\mu,\nu)}\int_{\R^{m}\times\R^{m}}\varphi(x,y)\,\theta(dx,dy)\bigg)\quad\text{for all $\mu,\nu\in\mathscr{P}$}
\end{equation*}
and the following three statements hold:
\begin{enumerate}[(1)]
\item Whenever $f,g:\R^{m}\rightarrow\R$ are measurable and satisfy $\varphi(x,y) = f(x) - g(y) \leq \rho(|x-y|)$ for every $x,y\in\R^{m}$, then
\begin{equation*}
\vartheta(\mu,\nu) = \phi\bigg(\int_{\R^{m}}f(x)\,\mu(dx) - \int_{\R^{m}}g(y)\,\nu(dy)\bigg)\quad\text{for any $\mu,\nu\in\mathscr{P}$}.
\end{equation*}
\item For the choice $\phi(v) = v^{1/p}$ and $\rho(v) = v^{p}$ for any $v\in\R_{+}$ and $\varphi(x,y) = |x-y|^{p}$ for all $x,y\in\R^{m}$  we get that $\mathscr{P}_{\rho}(\R^{m}) = \mathscr{P}_{p}(\R^{m})$ and $\vartheta = \vartheta_{p}$.
\item The inclusion $\mathscr{P}\subset\mathscr{P}_{p}(\R^{m})$ and the domination condition~\eqref{eq:general domination condition} are valid as soon as $\phi(v)\leq (v^{+})^{1/p}$ for each $v\in\R\cup\{-\infty\}$ and $\rho(v) = c_{p,\mathscr{P}}^{p}v^{p}$ for any $v\in\R_{+}$.
\end{enumerate}
\end{Example}

We notice that even in the case $(\mathscr{P},\vartheta) = (\mathscr{P}_{2}(\R^{m}),\vartheta_{2})$ for any solution $X$ to~\eqref{eq:McKean-Vlasov} the function $[t_{0},\infty)\rightarrow\R_{+}$, $s\mapsto E[|X_{s}|^{2}]$ is not necessarily locally integrable.  For instance, let for the moment the following partial and complete affine growth conditions hold:
\begin{equation*}
x'\B(x,\mu) \leq |x|\big(\kappa  + \chi\vartheta_{2}(\mu,\delta_{0})\big)\quad\text{and}\quad |\Sigma(x,\mu)| \leq \kappa + \chi\vartheta_{2}(\mu,\delta_{0})
\end{equation*}
for every $(x,\mu)\in\R^{m}\times\mathscr{P}_{2}(\R^{m})$ and some $\kappa,\chi\in\R_{+}$, where $\delta_{0}$ is the Dirac measure in $0\in\R^{m}$. Then It{\^o}'s formula immediately yields that
\begin{align*}
X_{t}^{2} - X_{t_{0}}^{2} - 2\int_{t_{0}}^{t}X_{s}'\Sigma_{s}(X_{s},P_{X_{s}})&\,dW_{s} = \int_{t_{0}}^{t}2X_{s}'\B_{s}(X_{s},P_{X_{s}}) + |\Sigma_{s}(X_{s},P_{X_{s}})|^{2}\,ds\\
&\leq \int_{t_{0}}^{t} 2|X_{s}|\big(\kappa + \chi E\big[|X_{s}|^{2}\big]^{\frac{1}{2}}\big) + \big(\kappa + \chi E\big[|X_{s}|^{2}\big]^{\frac{1}{2}}\big)^{2}\,ds
\end{align*}
for all $t\in [t_{0},\infty)$ a.s., since $\vartheta_{p}(\mu,\delta_{0})^{p} = \int_{\R^{m}}|x|^{p}\,\mu(dx)$ for any $\mu\in\mathscr{P}_{p}(\R^{m})$. Although the first appearing Lebesgue integral is finite, the second may be infinite, since the condition $P_{X}\in\mathscr{P}$ in Definition~\ref{de:solution} is merely equivalent to the square-integrability of $X$.

Certainly, this observation depends on the positivity of the variable $\chi$. Since if $\chi = 0$, then Lemmas~\ref{le:2nd moment growth estimate} and~\ref{le:p-th moment growth estimate} show that $E[|X|^{2}]$ is locally bounded. Thus, as in~\cite{KalMeyPro21}, we shall state all uniqueness, stability and existence results under a local integrability condition, which takes \emph{growth in the measure variable} into account.

For this purpose, we let $\Theta$ denote an $[0,\infty]$-valued Borel measurable functional on $[t_{0},\infty)\times\mathscr{P}^{2}\times\mathscr{P}(\R^{m})$ when $\mathscr{P}(\R^{m})$ is endowed with the Prokhorov metric and the induced Borel $\sigma$-field.

\begin{Definition}
\emph{Pathwise uniqueness} holds for~\eqref{eq:McKean-Vlasov} (with respect to $\Theta$) if every two solutions $X$ and $\tilde{X}$ with $X_{t_{0}} = \tilde{X}_{t_{0}}$ a.s.~(and for which the measurable function
\begin{equation}\label{eq:integrable function}
[t_{0},\infty)\rightarrow\R_{+},\quad s\mapsto\Theta(s,P_{X_{s}},P_{\tilde{X}_{s}},P_{X_{s} - \tilde{X}_{s}})
\end{equation}
is locally integrable) are indistinguishable.
\end{Definition}

To consider a relevant class of functionals such as $\Theta$ let $\mathrm{R}_{c}$ be the cone of all $\R_{+}$-valued continuous functions on $\R_{+}$ that are positive on $(0,\infty)$ and vanish at the origin.

\begin{Example}\label{ex:functional}
Let $\eta,\lambda\in\mathscr{L}_{loc}^{1}(\R_{+})$ and $\rho,\varrho\in\mathrm{R}_{c}$ be such that $\rho$ is concave, $\varrho$ is increasing and
\begin{equation*}
\Theta(s,\mu,\tilde{\mu},\nu) = \lambda(s)\varrho\big(\vartheta(\mu,\tilde{\mu})\big) + \eta(s)\int_{\R^{m}}\rho(|x|^{p})\,\nu(dx)
\end{equation*}
for any $s\in [t_{0},\infty)$, all $\mu,\tilde{\mu}\in\mathscr{P}$ and every $\nu\in\mathscr{P}(\R^{m})$. Then $\Theta(s,\mu,\tilde{\mu},\nu) < \infty$ as soon as $\nu\in\mathscr{P}_{p}(\R^{m})$, and for any two continuous processes $X$ and $\tilde{X}$ we have
\begin{equation*}
\Theta(\cdot,P_{X},P_{\tilde{X}},P_{X-\tilde{X}}) = \lambda\varrho\big(\vartheta(P_{X},P_{\tilde{X}})^{p}\big) + \eta E\big[\rho(|X-\tilde{X}|^{p})\big],
\end{equation*}
which yields a locally integrable function if $E[|X-\tilde{X}|^{p}]$ is locally bounded, for example.
\end{Example}

Based on the stability concepts of Definition 2.11~in~\cite{KalMeyPro21} that involve first moments, we formulate \emph{generalised notions of stability} for~\eqref{eq:McKean-Vlasov} in a \emph{global meaning}. In this regard, a shift of the stochastic drift and diffusion coefficients is not required, as the explicit argumentation preceeding the stability definitions in~\cite{KalMeyPro21} explains.

\begin{Definition}\label{de:stability}
Let $\alpha > 0$ and $p\in [1,\infty)$.
\begin{enumerate}[(i)]
\item Equation~\eqref{eq:McKean-Vlasov} is \emph{stable in $p$-th moment} (relative to $\Theta$) if for any two solutions $X$ and $\tilde{X}$ (for which~\eqref{eq:integrable function} is locally integrable) it holds that
\begin{equation*}
\sup_{t\in [t_{0},\infty)} E\big[|X_{t} - \tilde{X}_{t}|^{p}\big] < \infty,
\end{equation*}
provided $E[|X_{t_{0}} - \tilde{X}_{t_{0}}|^{p}] < \infty$. If additionally $\lim_{t\uparrow\infty} E[|X_{t} - \tilde{X}_{t}|^{p}] = 0$, then we refer to \emph{asymptotic stability in $p$-th moment}.
\item We say that~\eqref{eq:McKean-Vlasov} is \emph{$\alpha$-exponentially stable in $p$-th moment} (relative to $\Theta$) if there exist $\lambda < 0$ and $c\in\R_{+}$ such that any two solutions $X$ and $\tilde{X}$ satisfy
\begin{equation}\label{eq:exponential p-th moment stability}
E\big[|X_{t} - \tilde{X}_{t}|^{p}\big] \leq ce^{\lambda(t-t_{0})^{\alpha}}E\big[|X_{t_{0}} - \tilde{X}_{t_{0}}|^{p}\big]
\end{equation}
for each $t\in [t_{0},\infty)$ as soon as $E[|X_{t_{0}} - \tilde{X}_{t_{0}}|^{p}] < \infty$ (and~\eqref{eq:integrable function} is locally integrable). In this case, $\lambda$ is a \emph{$p$-th moment $\alpha$-Lyapunov exponent} for~\eqref{eq:McKean-Vlasov}.
\item We call~\eqref{eq:McKean-Vlasov} \emph{pathwise $\alpha$-exponentially} (relative to an initial $p$-th moment and $\Theta$) if there is $\lambda < 0$ such that
\begin{equation*}
\limsup_{t\uparrow\infty}\frac{1}{t^{\alpha}}\log\big(|X_{t} - \tilde{X}_{t}|\big) \leq \lambda\quad\text{a.s.}
\end{equation*}
(whenever $E[|X_{t_{0}} - \tilde{X}_{t_{0}}|^{p}] < \infty$ and~\eqref{eq:integrable function} is locally integrable), in which case $\lambda$ is a \emph{pathwise $\alpha$-Lyapunov exponent} for~\eqref{eq:McKean-Vlasov}.
\end{enumerate}
\end{Definition}

\begin{Remark}\label{re:exponential moment stability}
Suppose that $E[|X-\tilde{X}|^{p}]$ is locally bounded. Then from~\eqref{eq:exponential p-th moment stability} we directly infer that
\begin{equation*}
\limsup_{t\uparrow\infty} \frac{1}{t^{\alpha}}\log\big(E\big[|X_{t} - \tilde{X}_{t}|^{p}\big]\big) \leq \lambda.
\end{equation*}
Conversely, the latter bound yields the former for $\lambda + \varepsilon$ instead of $\lambda$ for any $\varepsilon > 0$. For a more general reasoning see Remark 2.12 in~\cite{KalMeyPro21}.
\end{Remark}

\section{Main results}\label{se:3}

\subsection{A quantitative second moment estimate and pathwise uniqueness}\label{se:3.1}

By comparing solutions to~\eqref{eq:McKean-Vlasov} with possibly different drift and diffusion coefficients, we obtain pathwise uniqueness. In this regard, let $\tilde{\B}$ and $\tilde{\Sigma}$ be two admissible maps on $[t_{0},\infty)\times\Omega\times\R^{m}\times\mathscr{P}$, taking their values in $\R^{m}$ and $\R^{m\times d}$, respectively.

First, let us introduce an \emph{uniform error and continuity condition} for the coefficients $(\B,\Sigma)$ and $(\tilde{\B},\tilde{\Sigma})$ that is only partially restrictive for the drift coefficients $\B$ and $\tilde{\B}$ and involves the cone $\mathrm{R}_{c}$ introduced just before Example~\ref{ex:functional}:
\begin{enumerate}[label=(C.\arabic*), ref=C.\arabic*, leftmargin=\widthof{(C.1)} + \labelsep]
\item\label{con:1} There are $\alpha\in (0,1]$, $\varepsilon,\lambda\in\mathscr{S}_{loc}^{1}(\R_{+})$, $\eta\in\mathscr{S}_{loc}^{1/(1-\alpha)}(\R_{+})$ and $\rho,\varrho\in\mathrm{R}_{c}$ such that $\rho^{1/\alpha}$ is concave, $\varrho$ is increasing and
\begin{equation*}
2(x-\tilde{x})'\big(\B(x,\mu) - \tilde{\B}(\tilde{x},\tilde{\mu})\big) + |\Sigma(x,\mu) - \tilde{\Sigma}(\tilde{x},\tilde{\mu})|^{2} \leq \varepsilon + \eta\rho(|x-\tilde{x}|^{2}) + \lambda\varrho\big(\vartheta(\mu,\tilde{\mu})^{2}\big)
\end{equation*}
for any $x,\tilde{x}\in\R^{m}$ and all $\mu,\tilde{\mu}\in\mathscr{P}$ a.e.~on $[t_{0},\infty)$ a.s.
\end{enumerate}

In the case that $(\B,\Sigma)=(\tilde{\B},\tilde{\Sigma})$ and $\varepsilon$ vanishes,~\eqref{con:1} is simply a \emph{partial uniform continuity condition} for the coefficients of~\eqref{eq:McKean-Vlasov}. Moreover, we may interpret the expression $\varepsilon$ as \emph{error bound} for the differences $\B-\tilde{\B}$ and $\Sigma-\tilde{\Sigma}$.

For the succeeding $L^{2}$-estimate based on Bihari's inequality we recall that for any $\rho\in\mathrm{R}_{c}$ the function $\Phi_{\rho}\in C^{1}((0,\infty))$ given by
\begin{equation}\label{eq:rho-function 1}
\Phi_{\rho}(w) := \int_{1}^{w}\frac{1}{\rho(v)}\,dv
\end{equation}
is a strictly increasing $C^{1}$-diffeomorphism onto the interval $(\Phi_{\rho}(0),\Phi_{\rho}(\infty))$. Let $D_{\rho}$ be the set of all $(v,w)\in\R_{+}^{2}$ with $\Phi_{\rho}(v) + w < \Phi_{\rho}(\infty)$ and note that $\Psi_{\rho}:D_{\rho}\rightarrow\R_{+}$ given by
\begin{equation}\label{eq:rho-function 2}
\Psi_{\rho}(v,w) := \Phi_{\rho}^{-1}\big(\Phi_{\rho}(v) + w\big)
\end{equation}
is a continuous extension of a locally Lipschitz continuous function that is increasing in each coordinate. Under~\eqref{con:1}, we use the functions $\gamma,\delta\in\mathscr{L}_{loc}^{1}(\R_{+})$ defined by
\begin{equation*}
\gamma:=\alpha[\eta]_{\frac{1}{1-\alpha}} + \beta E[\lambda]\quad\text{and}\quad \delta:=(1-\alpha)[\eta]_{\frac{1}{1-\alpha}} + (1-\beta)E[\lambda]
\end{equation*}
for fixed $\beta\in (0,1]$ to give a \emph{quantitative $L^{2}$-bound}. Thereby, we require until the end of this section that the $L^{p}$-norm bound~\eqref{eq:domination condition} is valid for $p=2$.

\begin{Proposition}\label{pr:specific abstract second moment estimate}
Let~\eqref{con:1} hold, $X$ and $\tilde{X}$ be solutions to~\eqref{eq:McKean-Vlasov} with respective coefficients $(\B,\Sigma)$ and $(\tilde{\B},\tilde{\Sigma})$ so that $E[|Y_{t_{0}}|^{2}] < \infty$ for $Y:=X-\tilde{X}$ and $E[\lambda]\varrho(\vartheta(P_{X},P_{\tilde{X}})^{2})$ be locally integrable. Define $\rho_{0},\varrho_{0}\in C(\R_{+})$ via
\begin{equation*}
\rho_{0}(v) := \rho(v)^{\frac{1}{\alpha}}\quad\text{and}\quad\varrho_{0}(v) := \rho_{0}(v)\vee\varrho(c_{2,\mathscr{P}}^{2}v)^{\frac{1}{\beta}}
\end{equation*}
and assume that $\Phi_{\rho_{0}}(\infty) = \infty$ or $E[\eta\rho(|Y|^{2})]$ is locally integrable. 
Then $E[|Y|^{2}]$ is locally bounded and
\begin{equation*}
\sup_{s\in [t_{0},t]}E\big[|Y_{s}|^{2}\big] \leq\Psi_{\varrho_{0}}\bigg(E\big[|Y_{t_{0}}|^{2}\big] + \int_{t_{0}}^{t}E[\varepsilon_{s}] + \delta(s)\,ds,\int_{t_{0}}^{t}\gamma(s)\,ds\bigg)
\end{equation*}
for each $t\in [t_{0},t_{0}^{+})$, where $t_{0}^{+}$ represents the supremum over all $t\in [t_{0},\infty)$ for which
\begin{equation*}
\bigg(E\big[|Y_{t_{0}}|^{2}\big] + \int_{t_{0}}^{t}E[\varepsilon_{s}] + \delta(s)\,ds,\int_{t_{0}}^{t}\gamma(s)\,ds\bigg)\in D_{\varrho_{0}}.
\end{equation*}
\end{Proposition}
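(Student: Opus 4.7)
The plan is to apply It{\^o}'s formula to $|Y|^2$, use (C.1) to absorb the drift--diffusion bracket into three controlled pieces, reduce the right-hand side (after taking expectation) to a nonlinear Gronwall inequality in $E[|Y|^2]$, and close with Bihari's lemma through $\Phi_{\varrho_0}$ and $\Psi_{\varrho_0}$. Concretely, applying It{\^o}'s formula to $|Y_t|^2$ and bounding the drift--diffusion bracket by (C.1) yields
\begin{equation*}
|Y_t|^2 - |Y_{t_0}|^2 \le \int_{t_0}^t \varepsilon_s + \eta_s \rho(|Y_s|^2) + \lambda_s\varrho\big(\vartheta(P_{X_s},P_{\tilde X_s})^2\big)\,ds + M_t,
\end{equation*}
where $M$ is a local martingale arising from the stochastic integral. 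A standard localisation $\tau_n \uparrow \infty$ capping $|Y|$ and the quadratic variation of $M$, combined with the assumed local integrability of $\varepsilon$, $\eta$ and of $s\mapsto E[\lambda_s]\varrho(\vartheta(P_{X_s},P_{\tilde X_s})^2)$, legitimises taking expectation and erases $M$.

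The second step is to control the two nonlinear terms by a function of $E[|Y_{s\wedge\tau_n}|^2]$. Writing $\eta_s\rho(|Y|^2) = \eta_s \rho_0(|Y|^2)^\alpha$ with $\rho_0 = \rho^{1/\alpha}$ concave, H{\"o}lder with conjugate exponents $(1/(1-\alpha),1/\alpha)$ followed by Jensen gives
\begin{equation*}
E\big[\eta_s\rho(|Y|^2)\big] \le [\eta_s]_{1/(1-\alpha)}\, \rho_0\big(E[|Y|^2]\big)^\alpha,
\end{equation*}
and the specialisation $u^\alpha \le \alpha u + (1-\alpha)$ of~\eqref{eq:essential inequality} splits the result into $\alpha[\eta_s]_{1/(1-\alpha)}\rho_0(E[|Y|^2]) + (1-\alpha)[\eta_s]_{1/(1-\alpha)}$, producing the $\eta$-contributions to $\gamma$ and $\delta$. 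For the $\lambda\varrho$-term, the $L^2$-domination~\eqref{eq:domination condition} together with monotonicity of $\varrho$ gives $\varrho(\vartheta(P_X,P_{\tilde X})^2) \le \varrho(c_{2,\mathscr{P}}^2 E[|Y|^2])$, and the Young split $v \le \beta v^{1/\beta} + (1-\beta)$ separates $E[\lambda_s]\varrho(c_{2,\mathscr{P}}^2 E[|Y|^2])$ into $\beta E[\lambda_s]\varrho(c_{2,\mathscr{P}}^2 E[|Y|^2])^{1/\beta} + (1-\beta)E[\lambda_s]$. By definition of $\varrho_0 = \rho_0 \vee \varrho(c_{2,\mathscr{P}}^2\,\cdot\,)^{1/\beta}$, both nonlinear pieces are then dominated by $\gamma(s)\varrho_0(E[|Y_{s\wedge\tau_n}|^2])$.

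Setting $f_n(t) := \sup_{r\in[t_0,t]}E[|Y_{r\wedge\tau_n}|^2]$ and using that the resulting right-hand side is non-decreasing in $t$, one obtains the Volterra inequality
\begin{equation*}
f_n(t) \le E[|Y_{t_0}|^2] + \int_{t_0}^t \big(E[\varepsilon_s] + \delta(s)\big)\,ds + \int_{t_0}^t \gamma(s)\varrho_0(f_n(s))\,ds,
\end{equation*}
to which Bihari's lemma, phrased via $\Phi_{\varrho_0}$ and $\Psi_{\varrho_0}$, applies and delivers the stated $\Psi_{\varrho_0}$-bound on $[t_0,t_0^+)$; Fatou's lemma and continuity of $\Psi_{\varrho_0}$ then transport the estimate to $f(t) := \sup_{r\in[t_0,t]}E[|Y_r|^2]$. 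The main obstacle will be the interplay of the localisation with the H{\"o}lder/Jensen step, which requires $E[\eta\rho(|Y|^2)]$ to be finite: this is precisely the role of the disjunctive hypothesis---when $\Phi_{\rho_0}(\infty) = \infty$ the Bihari bound for $f_n$ stays uniformly bounded on compacts so Fatou suffices, whereas in the alternative case the a priori local integrability of $E[\eta\rho(|Y|^2)]$ lets the H{\"o}lder reduction be performed directly, independently of the localising limit.
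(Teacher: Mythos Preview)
Your overall strategy—It{\^o}'s formula, condition~\eqref{con:1}, localisation, the H{\"o}lder/Jensen/Young reductions, and Bihari's inequality—is the right one and matches the paper. There is, however, a genuine gap in the step where you pass to the Volterra inequality for $f_n$. The term $\lambda_s\varrho(\vartheta(P_{X_s},P_{\tilde X_s})^2)$ is controlled, via~\eqref{eq:domination condition}, by $\varrho(c_{2,\mathscr{P}}^2E[|Y_s|^2])$, which involves the \emph{unlocalised} second moment. You then assert that ``both nonlinear pieces are dominated by $\gamma(s)\varrho_0(E[|Y_{s\wedge\tau_n}|^2])$'', but $E[|Y_s|^2]$ and $E[|Y_{s\wedge\tau_n}|^2]$ are not comparable in general, so the displayed inequality for $f_n$ does not follow. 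A single Bihari step with $\varrho_0$ applied to the localised moment is not available.

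The paper (Proposition~\ref{pr:abstract second moment estimate}) resolves this by a two-pass argument. In the first pass the measure-dependent piece $E[\lambda]\varrho(\vartheta(P_X,P_{\tilde X})^2)$ is kept as a \emph{free} locally integrable term—this is precisely the role of the hypothesis that $E[\lambda]\varrho(\vartheta(P_X,P_{\tilde X})^2)$ be locally integrable—and Bihari's inequality is applied with the smaller modulus $\rho_0$ alone to the localised inequality. Under $\Phi_{\rho_0}(\infty)=\infty$ this gives a uniform-in-$n$ bound, and Fatou yields that $E[|Y|^2]$ is locally bounded; under the alternative hypothesis one passes to the limit directly via dominated convergence. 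Only then, with the inequality written for the unlocalised $E[|Y_t|^2]$, does one invoke~\eqref{eq:domination condition} and the Young split to replace $E[\lambda]\varrho(\theta^2)$ by $\beta E[\lambda]\varrho_0(E[|Y|^2])+(1-\beta)E[\lambda]$ and close with a second Bihari application, now with $\varrho_0$. Your discussion of the disjunctive hypothesis correctly flags the need for care with the $\eta\rho$ term, but misses that the $\lambda\varrho$ term forces this two-stage structure.
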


\begin{Remark}
From $\Phi_{\varrho_{0}}(\infty) = \infty$ it follows that $\Phi_{\rho_{0}}(\infty) = \infty$ and $D_{\varrho_{0}} = \R_{+}^{2}$. Thus,  $E[|Y|^{2}]$ is bounded in this case if $E[\varepsilon]$, $\gamma$ and $\delta$ are integrable. Moreover, the conditions
\begin{equation*}
\Phi_{\varrho_{0}}(0) = -\infty,\quad Y_{t_{0}}=0\quad\text{a.s.}\quad\text{and}\quad E[\varepsilon]=\delta = 0\quad\text{a.e.}
\end{equation*}
imply $t_{0}^{+} = \infty$ and $Y= 0$ a.s. This fact will be used to derive pathwise uniqueness.
\end{Remark}

\begin{Example}\label{ex:modulus of continuity}
Assume that $c_{2,\mathscr{P}}=\alpha=\beta=1$ and $\rho(v) = \varrho(v) = \hat{\alpha} v(|\log(v)| + 1)$ for any $v\in\R_{+}$ and some $\hat{\alpha}\in (0,1]$. Then $\Phi_{\varrho_{0}}(0) = - \infty$, $\Phi_{\varrho_{0}}(\infty) = \infty$ and we have
\begin{equation*}
\Psi_{\varrho_{0}}(v,w) =
\begin{cases}
e^{(1+\log(v))e^{\hat{\alpha}w} -1} & \text{if $v\geq 1$},\\
e^{\frac{e^{\hat{\alpha} w}}{1-\log(v)} -1 }& \text{if $1 > v \geq \exp(1-e^{\hat{\alpha} w})$},\\
e^{1 - (1-\log(v))e^{-\hat{\alpha} w}} & \text{if $v < \exp(1 - \hat{\alpha} w)$}
\end{cases}
\end{equation*}
for every $v,w\in\R_{+}$, which leads to an explicit $L^{2}$-estimate in Proposition~\ref{pr:specific abstract second moment estimate}.
\end{Example}

To deduce pathwise uniqueness from the comparison, we restrict~\eqref{con:1} to the case when $(\B,\Sigma)$ $=(\tilde{\B},\tilde{\Sigma})$, $\alpha=1$, $\epsilon=0$ and $\eta$ is deterministic. Further, if $\B$ and $\Sigma$ are independent of $\mu\in\mathscr{P}$, then this condition will be imposed on compact sets only:
\begin{enumerate}[label=(C.\arabic*), ref=C.\arabic*, leftmargin=\widthof{(C.3)} + \labelsep]
\setcounter{enumi}{1}
\item\label{con:2} There are $\eta\in\mathscr{L}_{loc}^{1}(\R_{+})$, $\lambda\in\mathscr{S}_{loc}^{1}(\R_{+})$ and $\rho,\varrho\in\mathrm{R}_{c}$ so that $\rho$ is concave, $\varrho$ is increasing and
\begin{equation*}
2(x-\tilde{x})'\big(\B(x,\mu) - \B(\tilde{x},\tilde{\mu})\big) + |\Sigma(x,\mu) - \Sigma(\tilde{x},\tilde{\mu})|^{2} \leq \eta\rho(|x-\tilde{x}|^{2}) + \lambda\varrho\big(\vartheta(\mu,\tilde{\mu})^{2}\big)
\end{equation*}
for any $x,\tilde{x}\in\R^{m}$ and all $\mu,\tilde{\mu}\in\mathscr{P}$ a.e.~on $[t_{0},\infty)$ a.s.
\item\label{con:3} $\B$ and $\Sigma$ are independent of $\mu\in\mathscr{P}$ and for each $n\in\N$ there are $\eta_{n}\in\mathscr{L}_{loc}^{1}(\R_{+})$ and a concave function $\rho_{n}\in\mathrm{R}_{c}$ such that
\begin{equation*}
2(x-\tilde{x})'\big(\hat{\B}(x) - \hat{\B}(\tilde{x})\big) + |\hat{\Sigma}(x) - \hat{\Sigma}(\tilde{x})|^{2} \leq \eta_{n}\rho_{n}(|x-\tilde{x}|^{2})
\end{equation*}
for every $x,\tilde{x}\in\R^{m}$ with $|x|\vee|\tilde{x}|\leq n$ a.e.~on $[t_{0},\infty)$ a.s., where $\hat{\B}:=\B(\cdot,\mu_{0})$ and $\hat{\Sigma}:=\Sigma(\cdot,\mu_{0})$ for fixed $\mu_{0}\in\mathscr{P}$.
\end{enumerate}

Under~\eqref{con:2}, pathwise uniqueness for~\eqref{eq:McKean-Vlasov} follows with respect to the $\R_{+}$-valued Borel measurable functional $\Theta$ on $[t_{0},\infty)\times\mathscr{P}^{2}\times\mathscr{P}_{2}(\R^{m})$ defined via
\begin{equation*}
\Theta(s,\mu,\tilde{\mu},\nu) := E\big[\lambda_{s}\big]\varrho\big(\vartheta(\mu,\tilde{\mu})^{2}\big) + \int_{\R^{m}}\rho(|y|^{2})\,\nu(dy).
\end{equation*}
If $\Phi_{\rho}(\infty) = \infty$, then the second term plays no role and we consider $\Theta:[t_{0},\infty)\times\mathscr{P}^{2}\rightarrow\R_{+}$ given by $\Theta(\cdot,\mu,\tilde{\mu}):=E[\lambda]\varrho(\vartheta(\mu,\tilde{\mu})^{2})$ instead, as this yields a weaker local integrability condition.
\begin{Corollary}\label{co:pathwise uniqueness}
The following two assertions hold:
\begin{enumerate}[(i)]
\item If~\eqref{con:2} holds and $\int_{0}^{1}(\rho(v)\vee\varrho(c_{2,\mathscr{P}}^{2}v))^{-1}\,dv = \infty$, then pathwise uniqueness for~\eqref{eq:McKean-Vlasov} relative to $\Theta$ is valid.
\item If~\eqref{con:3} is satisfied and $\int_{0}^{1}\rho_{n}(v)^{-1}\,dv = \infty$ for all $n\in\N$, then pathwise uniqueness for the SDE~\eqref{eq:McKean-Vlasov} follows.
\end{enumerate}
\end{Corollary}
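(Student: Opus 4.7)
For part~(i), I plan to invoke Proposition~\ref{pr:specific abstract second moment estimate} with $(\tilde{\B},\tilde{\Sigma})=(\B,\Sigma)$, $\alpha=\beta=1$ and $\varepsilon=0$, so that~\eqref{con:2} becomes exactly the instance of~\eqref{con:1} that survives those substitutions; the deterministic $\eta\in\mathscr{L}_{loc}^{1}(\R_{+})$ sits in $\mathscr{S}_{loc}^{\infty}(\R_{+})$ because $[\eta]_{\infty}=\eta$ in the deterministic case, so that the integrability requirement~\eqref{con:1} places on $\eta$ reduces to local integrability. A key preliminary observation is that every concave $\rho\in\mathrm{R}_{c}$ is sublinear at infinity: from $\rho(0)=0$ together with concavity, the ratio $\rho(v)/v$ is non-increasing, yielding $\rho(v)\leq\rho(1)v$ for $v\geq1$ and hence $\Phi_{\rho_{0}}(\infty)=\Phi_{\rho}(\infty)=\infty$. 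By the comment directly preceding Corollary~\ref{co:pathwise uniqueness} this permits us to work with the weaker functional $\Theta(\cdot,\mu,\tilde{\mu})=E[\lambda]\varrho(\vartheta(\mu,\tilde{\mu})^{2})$ and dispenses with any separate check of local integrability of $E[\eta\rho(|Y|^{2})]$.

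With $\alpha=\beta=1$ one has $\delta\equiv0$, $\gamma=\eta+E[\lambda]$ and $\varrho_{0}(v)=\rho(v)\vee\varrho(c_{2,\mathscr{P}}^{2}v)$, so the divergence $\int_{0}^{1}\varrho_{0}(v)^{-1}\,dv=\infty$ is precisely $\Phi_{\varrho_{0}}(0)=-\infty$. This simultaneously forces $t_{0}^{+}=\infty$ --- the pair $(0,\int_{t_{0}}^{t}\gamma(s)\,ds)$ always lies in $D_{\varrho_{0}}$ --- and $\Psi_{\varrho_{0}}(0,w)=\Phi_{\varrho_{0}}^{-1}(-\infty)=0$ for every $w\in\R_{+}$. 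Combined with $Y_{t_{0}}=0$ a.s.~for $Y:=X-\tilde{X}$, Proposition~\ref{pr:specific abstract second moment estimate} delivers $\sup_{s\in[t_{0},t]}E[|Y_{s}|^{2}]=0$ for every $t\geq t_{0}$, and continuity of the two solutions upgrades pointwise vanishing to indistinguishability.

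For part~(ii), Proposition~\ref{pr:specific abstract second moment estimate} cannot be applied directly because~\eqref{con:3} only imposes an Osgood estimate on compact sets, so I localise via the stopping times $\tau_{n}:=\inf\{t\geq t_{0}:|X_{t}|\vee|\tilde{X}_{t}|>n\}$, which increase almost surely to infinity by path continuity. Applying It\^o's formula to $|Y|^{2}$ on $[t_{0},\tau_{n}]$ with $Y:=X-\tilde{X}$, using~\eqref{con:3} to bound the drift, and exploiting that the martingale integrand is uniformly bounded on this interval --- so that the stopped stochastic integral is a true martingale --- I take expectations and apply Jensen's inequality for the concave $\rho_{n}$ to arrive at the scalar Bihari inequality
\begin{equation*}
f_{n}(t)\leq\int_{t_{0}}^{t}\eta_{n}(s)\rho_{n}\big(f_{n}(s)\big)\,ds,\qquad f_{n}(t):=E\big[|Y_{t\wedge\tau_{n}}|^{2}\big],
\end{equation*}
with $f_{n}(t_{0})=0$. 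The hypothesis $\int_{0}^{1}\rho_{n}(v)^{-1}\,dv=\infty$ then forces $f_{n}\equiv0$ for each $n$, and sending $n\uparrow\infty$ produces $Y\equiv0$ a.s.

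The main technical obstacle is the bookkeeping in~(i), in particular the convention $(1-\alpha)[\eta]_{1/(1-\alpha)}=0$ when $\alpha=1$ and the automatic divergence $\Phi_{\rho}(\infty)=\infty$ for concave $\rho\in\mathrm{R}_{c}$, without which one would be forced into the more restrictive second branch of the hypothesis of Proposition~\ref{pr:specific abstract second moment estimate}. In~(ii) the only delicate step is confirming the true-martingale property on $[t_{0},\tau_{n}]$, after which the localisation and Bihari argument are routine.
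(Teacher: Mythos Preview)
Your argument for part~(i) is correct and coincides with the paper's: both reduce to Proposition~\ref{pr:specific abstract second moment estimate} with $(\tilde{\B},\tilde{\Sigma})=(\B,\Sigma)$, $\alpha=\beta=1$, $\varepsilon=0$, and then read off $\Psi_{\varrho_{0}}(0,w)=0$ from $\Phi_{\varrho_{0}}(0)=-\infty$. Your explicit observation that concavity of $\rho$ forces $\Phi_{\rho}(\infty)=\infty$ is a nice clarification; the paper handles this implicitly via the indicator $\mathbbm{1}_{\R_{+}}(\Phi_{\rho}(\infty))$ in its choice of $\Theta$.

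For part~(ii) you take a more elementary route than the paper. The paper introduces the radial retraction $\varphi_{n}$ onto the ball of radius $n$, defines modified coefficients $\null_{n}\B:=\hat{\B}(\varphi_{n}(\cdot))\mathbbm{1}_{\{\tau_{n}>\cdot\}}$ and $\null_{n}\Sigma:=\hat{\Sigma}(\varphi_{n}(\cdot))\mathbbm{1}_{\{\tau_{n}>\cdot\}}$, observes that $X^{\tau_{n}}$ and $\tilde{X}^{\tau_{n}}$ solve the SDE with these new coefficients, checks that the latter satisfy~\eqref{con:1} \emph{globally} with $\varepsilon=\lambda=0$, and then reapplies Proposition~\ref{pr:specific abstract second moment estimate}. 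This has the advantage of reusing the localisation already built into that proposition's proof (via Lemma~\ref{le:auxiliary p-th moment estimate}). Your direct It\^o--Bihari computation avoids the retraction construction and is conceptually cleaner, but contains a small gap: you assert that the stochastic-integral integrand $Y_{s}'(\hat{\Sigma}(X_{s})-\hat{\Sigma}(\tilde{X}_{s}))$ is \emph{uniformly bounded} on $[t_{0},\tau_{n}]$. Condition~\eqref{con:3} only bounds the combination $2(x-\tilde{x})'(\hat{\B}(x)-\hat{\B}(\tilde{x}))+|\hat{\Sigma}(x)-\hat{\Sigma}(\tilde{x})|^{2}$ from above, and since the drift contribution can be arbitrarily negative this does not control $|\hat{\Sigma}(x)-\hat{\Sigma}(\tilde{x})|$ on its own. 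The fix is routine --- insert a second stopping time such as $\sigma_{k}:=\inf\{t\geq t_{0}:\int_{t_{0}}^{t}|\hat{\Sigma}(X_{s})-\hat{\Sigma}(\tilde{X}_{s})|^{2}\,ds\geq k\}\wedge\tau_{n}$, take expectations at level $\sigma_{k}$, and pass to the limit $k\uparrow\infty$ using $|Y^{\tau_{n}}|\leq 2n$ and dominated convergence --- but as written the true-martingale claim is not justified.
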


\begin{Remark}\label{re:pathwise uniqueness}
If $\B$ and $\Sigma$ are deterministic, in which case~\eqref{eq:deterministic case} holds, then pathwise uniqueness for~\eqref{eq:deterministic McKean-Vlasov} in the standard sense follows from the corollary, by stating the uniform continuity condition~\eqref{con:2} when $\lambda$ is independent of $\omega\in\Omega$ and lies in $\mathscr{L}_{loc}^{1}(\R_{+})$ and by using the domination condition~\eqref{eq:general domination condition} instead of the $L^{p}$-bound~\eqref{eq:domination condition} for $p=2$.
\end{Remark}

As application we consider the case that $\B$ and $\Sigma$ are \emph{integral maps}. Thereby, an $\R^{m\times d}$-valued map on $[t_{0},\infty)\times\Omega\times\R^{m}\times\R^{m}$ will be called admissible, as introduced in Section~\ref{se:2.2}, by viewing it as a map on $[t_{0},\infty)\times\Omega\times\R^{2m}\times\mathscr{P}$ that is independent of the measure variable $\mu\in\mathscr{P}$.

\begin{Example}
Let $\null_{0}\B$ and $\null_{0}\Sigma$ be maps on $[t_{0},\infty)\times\Omega\times\R^{m}\times\R^{m}$ with values in $\R^{m}$ and $\R^{m\times d}$ that are admissible such that $\null_{0}\B(x,\cdot)$ and $\null_{0}\Sigma(x,\cdot)$ are $\mu$-integrable,
\begin{equation}
\B(x,\mu) = \int_{\R^{m}}\null_{0}\B(x,y)\,\mu(dy)\quad\text{and}\quad \Sigma(x,\mu) = \int_{\R^{m}}\null_{0}\Sigma(x,y)\,\mu(dy)
\end{equation}
for all $(x,\mu)\in [t_{0},\infty)\times\mathscr{P}$. Then the following two assertions hold:
\begin{enumerate}[(1)]
\item Suppose that there are $\eta\in\mathscr{L}_{loc}^{1}(\R_{+})$, $\lambda\in\mathscr{S}_{loc}^{1}(\R_{+})$ and concave functions $\rho,\varrho\in\mathrm{R}_{c}$ such that $\varrho$ is increasing and
\begin{equation*}
2(x-\tilde{x})'\big(\null_{0}\B(x,y) - \null_{0}\B(\tilde{x},\tilde{y})\big) + |\null_{0}\Sigma(x,y) - \null_{0}\Sigma(\tilde{x},\tilde{y})|^{2}\leq \eta\rho(|x-\tilde{x}|^{2}) + \lambda\varrho(|y-\tilde{y}|^{2})
\end{equation*}
for all $x,\tilde{x},y,\tilde{y}\in\R^{m}$ a.e.~on $[t_{0},\infty)$ a.s. Then~\eqref{con:2} is valid for the choice $\vartheta=\vartheta_{2}$.
\item If in fact there are $\eta\in\mathscr{L}_{loc}^{1}(\R_{+}^{2})$, $\hat{\eta}_{1}\in\mathscr{L}_{loc}^{2}(\R_{+})$ and $\hat{\eta}_{2}\in\mathscr{S}_{loc}^{2}(\R_{+})$ such that
\begin{align*}
(x-\tilde{x})'\big(\null_{0}\B(x,y) - \null_{0}\B(\tilde{x},\tilde{y})\big) &\leq |x-\tilde{x}|\big(\eta_{1}|x-\tilde{x}| + \eta_{2}|y-\tilde{y}|\big)\\
\text{and}\quad |\null_{0}\Sigma(x,y) - \null_{0}\Sigma(\tilde{x},\tilde{y})| &\leq \hat{\eta}_{1}|x-\tilde{x}| + \hat{\eta}_{2}|y-\tilde{y}|
\end{align*}
for any $x,\tilde{x},y,\tilde{y}\in\R^{m}$ a.e.~on $[t_{0},\infty)$ a.s., then~\eqref{con:2} follows for $\vartheta = \vartheta_{1}$ and $\rho(v)$ $=\varrho(v) = v$ for each $v\in\R_{+}$. In this case,
\begin{equation*}
\Theta(\cdot,\mu,\tilde{\mu}) = (\eta_{2} + 2[\hat{\eta}_{2}]_{2}^{2})\vartheta_{1}(\mu,\tilde{\mu})^{2}
\end{equation*}
for all $\mu,\tilde{\mu}\in\mathscr{P}$ and, under the hypothesis that $\vartheta=\vartheta_{1}$, pathwise uniqueness for~\eqref{eq:McKean-Vlasov} with respect to $\Theta$ holds.
\end{enumerate}
\end{Example}

\subsection{An explicit moment estimate and moment stability}\label{se:3.2}

In this section we compare two solutions with varying drift and diffusion coefficients in the $L^{p}$-norm for $p\in [2,\infty)$. The resulting estimate implies standard, asymptotic and exponential stability in $p$-th moment. 

To this end, we require a \emph{uniform error and mixed H\oe lder continuity condition} for $(\B,\tilde{\B})$ and $(\Sigma,\tilde{\Sigma})$ that is only \emph{partially restrictive for the drift coefficients}:
\begin{enumerate}[label=(C.\arabic*), ref=C.\arabic*, leftmargin=\widthof{(C.4)} + \labelsep]
\setcounter{enumi}{3}
\item\label{con:4} There are $l\in\N$, $\alpha,\beta\in [0,1]^{l}$ with $\alpha + \beta\in [0,1]^{l}$, two $\R_{+}^{l}$-valued measurable maps $\zeta,\hat{\zeta}$ on $[t_{0},\infty)$ and for each $k\in\{1,\dots,l\}$ there are
\begin{equation*}
\null_{k}\eta\in\mathscr{S}_{loc}^{\frac{p}{1-\alpha_{k}}}(\R)\quad\text{and}\quad\null_{k}\hat{\eta}\in\mathscr{S}_{2,\mathrm{loc}}^{\frac{p}{1-\alpha_{k}}}(\R_{+})
\end{equation*}
such that $(x-\tilde{x})'(\B(x,\mu) - \tilde{\B}(\tilde{x},\tilde{\mu}))\leq \sum_{k=1}^{l}\zeta_{k} \,\null_{k}\eta|x-\tilde{x}|^{1 + \alpha_{k}}\vartheta(\mu,\tilde{\mu})^{\beta_{k}}$ and
\begin{equation*}
|\Sigma(x,\mu) - \tilde{\Sigma}(\tilde{x},\tilde{\mu})| \leq  \sum_{k=1}^{l}\hat{\zeta}_{k}\,\null_{k}\hat{\eta}|x-\tilde{x}|^{\alpha_{k}}\vartheta(\mu,\tilde{\mu})^{\beta_{k}}
\end{equation*}
for any $x,\tilde{x}\in\R^{m}$ and all $\mu,\tilde{\mu}\in\mathscr{P}$ a.e.~on $[t_{0},\infty)$ a.s. Further, for each $k\in\{1,\dots,l\}$ we have $\zeta_{k}=\hat{\zeta}_{k}=1$, if $\alpha_{k} + \beta_{k} = 1$, and the functions
\begin{equation}\label{eq:local integrability condition}
\zeta_{k}^{\frac{p}{1-\alpha_{k}-\beta_{k}}}\big[\null_{k}\eta\big]_{\frac{p}{1-\alpha_{k}}}\quad\text{and}\quad \hat{\zeta}_{k}^{\frac{2p}{1-\alpha_{k}-\beta_{k}}}\big[\null_{k}\hat{\eta}\big]_{\frac{p}{1-\alpha_{k}}}^{2}
\end{equation}
are locally integrable.
\end{enumerate}

\begin{Remark}
It is feasible to take $\zeta_{k} = \hat{\zeta}_{k} = 1$ for any $k\in\{1,\dots,l\}$ in~\eqref{con:4}. In this case, the functions in~\eqref{eq:local integrability condition} are automatically locally integrable and we allow for the setting 
\begin{equation*}
\null_{k}\eta = \null_{k}\kappa \quad\text{and}\quad \null_{k}\hat{\eta} =  \null_{k}\hat{\kappa}\quad\text{for each $k\in\{1,\dots,l\}$}
\end{equation*}
and some $\null_{k}\kappa\in\mathscr{L}_{loc}^{1}(\R)$ and $\null_{k}\hat{\kappa}\in\mathscr{L}_{loc}^{2}(\R_{+})$. However, the possibility to choose $\zeta$ and $\hat{\zeta}$ leads to the error estimate~\eqref{eq:error estimate} in Theorem~\ref{th:strong existence}, the announced strong existence result.
\end{Remark}

\begin{Example}\label{ex:error and Hoelder condition}
For $\alpha,\beta\in ]0,1]$ assume that there exist $\R_{+}$-valued measurable functions $\delta,\hat{\delta}$ on $[t_{0},\infty)$ and processes
\begin{equation*}
\null_{1}\eta\in\mathscr{S}_{loc}^{\frac{p}{1-\alpha}}(\R),\quad \null_{2}\eta\in\mathscr{S}_{2,loc}^{\frac{p}{1-\alpha}}(\R_{+}),\quad\varepsilon,\null_{2}\eta\in\mathscr{S}_{loc}^{p}(\R_{+})\quad\text{and}\quad\hat{\varepsilon},\null_{2}\hat{\eta}\in\mathscr{S}_{2,loc}^{p}(\R_{+})
\end{equation*}
such that $(x-\tilde{x})'(\B(x,\mu) - \tilde{\B}(\tilde{x},\tilde{\mu})) \leq |x-\tilde{x}|(\delta \varepsilon + \null_{1}\eta |x-\tilde{x}|^{\alpha} + \null_{2}\eta\vartheta(\mu,\tilde{\mu})^{\beta})$ and
\begin{equation*}
|\Sigma(x,\mu) - \tilde{\Sigma}(\tilde{x},\tilde{\mu})|\leq \hat{\delta} \hat{\varepsilon} + \null_{1}\hat{\eta}|x-\tilde{x}|^{\alpha} + \null_{2}\hat{\eta}\vartheta(\mu,\tilde{\mu})^{\beta}
\end{equation*}
for every $x,\tilde{x}\in\R^{m}$ and any $\mu,\tilde{\mu}\in\mathscr{P}$ a.e.~on $[t_{0},\infty)$. Further, let $\delta^{p}[\varepsilon]_{p}$ and $\hat{\delta}^{2p}[\hat{\varepsilon}]_{p}^{2}$ be locally integrable. Then~\eqref{con:4} holds for $l=3$ if we replace the coefficients
\begin{equation*}
\alpha,\quad \beta,\quad \zeta,\quad\hat{\zeta}\quad (\null_{1}\eta,\null_{2}\eta,\null_{3}\eta)\quad\text{and}\quad (\null_{1}\hat{\eta},\null_{2}\hat{\eta},\null_{3}\hat{\eta}),
\end{equation*}
appearing there, by $(0,\alpha,0)$, $(0,0,\beta)$, $(\delta,1,1)$, $(\hat{\delta},1,1)$, $(\varepsilon,\null_{1}\eta,\null_{2}\eta)$ and $(\hat{\varepsilon},\null_{1}\hat{\eta},\null_{2}\hat{\eta})
$.
\end{Example}

Generally, we note in~\eqref{con:4} that for any $k\in\{1,\dots,l\}$ with $\alpha_{k} = \beta_{k} = 0$ the coefficients $\zeta_{k}$, $\null_{k}\eta$, $\hat{\zeta}_{k}$ and $\null_{k}\hat{\eta}$ serve as \emph{error terms} for $\B-\tilde{\B}$ and $\Sigma-\tilde{\Sigma}$, respectively. Further, under this condition, we introduce two coefficients $\gamma_{p,\mathscr{P}}\in\mathscr{L}_{loc}^{1}(\R)$ and $\delta_{p,\mathscr{P}}\in\mathscr{L}_{loc}^{1}(\R_{+})$ by
\begin{equation}\label{eq:specific stability coefficient}
\begin{split}
\gamma_{p,\mathscr{P}} &:= \sum_{k=1}^{l}(p-1+\alpha_{k} + \beta_{k})c_{p,\mathscr{P}}^{\beta_{k}}\big[\null_{k}\eta\big]_{\frac{p}{1-\alpha_{k}}}\\
&\quad + c_{p}\sum_{j,k=1}^{l}(p-2 + \alpha_{j} + \alpha_{k} + \beta_{j} + \beta_{k})c_{p,\mathscr{P}}^{\beta_{j} + \beta_{k}}\big[\null_{j}\hat{\eta}\,\null_{k}\hat{\eta}\big]_{\frac{p}{2-\alpha_{j} - \alpha_{k}}}
\end{split}
\end{equation}
and
\begin{equation}\label{eq:drift stability coefficient}
\begin{split}
&\delta_{p,\mathscr{P}} := \sum_{k=1}^{l}(1-\alpha_{k} - \beta_{k})c_{p,\mathscr{P}}^{\beta_{k}}\zeta_{k}^{\frac{p}{1-\alpha_{k} - \beta_{k}}}\big[\null_{k}\eta\big]_{\frac{p}{1-\alpha_{k}}}\\
&\quad + c_{p}\sum_{j,k=1}^{l}(2-\alpha_{j}-\alpha_{k}-\beta_{j}-\beta_{k})c_{p,\mathscr{P}}^{\beta_{j} + \beta_{k}}(\hat{\zeta}_{j}\hat{\zeta}_{k})^{\frac{p}{2-\alpha_{j}-\alpha_{k}-\beta_{j}-\beta_{k}}}\big[\null_{j}\hat{\eta}\,\null_{k}\hat{\eta}\big]_{\frac{p}{2-\alpha_{j}-\alpha_{k}}}
\end{split}
\end{equation}
with $c_{p} := (p-1)/2$. Thereby, we observe that the term $[\null_{j}\hat{\eta}\null_{k}\hat{\eta}]_{p/(2-\alpha_{j}-\alpha_{k})}$ is indeed locally integrable for any $j,k\in\{1,\dots,l\}$, since H\oe lder's inequality gives
\begin{equation*}
\int_{t_{0}}^{t}\big[\null_{j}\hat{\eta}_{s}\,\null_{k}\hat{\eta}_{s}\big]_{\frac{p}{2-\alpha_{j} - \alpha_{k}}}\,ds \leq \int_{t_{0}}^{t} \big[\null_{j}\hat{\eta}_{s}\big]_{\frac{p}{1-\alpha_{j}}} \big[\null_{k}\hat{\eta}_{s}\big]_{\frac{p}{1-\alpha_{k}}}\,ds < \infty\quad\text{for all $t\in [t_{0},\infty)$.}
\end{equation*}
Moreover, since $\zeta_{k}=\hat{\zeta}_{k}=1$ for all $k\in\{1,\dots,l\}$ with $\alpha_{k} + \beta_{k} = 1$ and $1^{\infty} = \lim_{q\uparrow\infty} 1^{q} = 1$, by convention, Young's inequality yields that
\begin{equation*}
(2-\alpha_{j}-\alpha_{k}-\beta_{j}-\beta_{k})(\hat{\zeta}_{j}\hat{\zeta}_{k})^{\frac{p}{2-\alpha_{j}-\alpha_{k}-\beta_{j}-\beta_{k}}} \leq (1-\alpha_{j}-\beta_{j})\hat{\zeta}_{j}^{\frac{p}{1-\alpha_{j}-\beta_{j}}} + (1-\alpha_{k}-\beta_{k})\hat{\zeta}_{k}^{\frac{p}{1-\alpha_{k}-\beta_{k}}}
\end{equation*}
for any $j,k\in\{1,\dots,l\}$. This clarifies the local integrability of the expressions appearing inside the second sum in~\eqref{eq:drift stability coefficient}.

By means of the functions $\gamma_{p,\mathscr{P}}$ and $\delta_{p,\mathscr{P}}$ we get an \emph{explicit $L^{p}$-comparison estimate} under a local integrability condition involving the $\R_{+}$-valued Borel measurable functional $\Theta$ on $[t_{0},\infty)\times\mathscr{P}^{2}$ given by
\begin{equation}\label{eq:integrability functional}
\Theta(\cdot,\mu,\tilde{\mu}):=\sum_{\substack{k=1,\\ \beta_{k} > 0}}^{l}\zeta_{k}\big[\null_{k}\eta\big]_{\frac{p}{1-\alpha_{k}}}\vartheta(\mu,\tilde{\mu})^{\beta_{k}} 
+ \hat{\zeta}_{k}^{2}\big[\null_{k}\hat{\eta}\big]_{\frac{p}{1-\alpha_{k}}}^{2}\vartheta(\mu,\tilde{\mu})^{2\beta_{k}}.
\end{equation}
For instance, let $\mu,\tilde{\mu}$ be two $\mathscr{P}$-valued Borel measurable functions on $[t_{0},\infty)$ for which the function $[t_{0},\infty)\rightarrow\R_{+}$, $s\mapsto\vartheta(\mu,\tilde{\mu})(s)$ is locally bounded. Then $\Theta(\cdot,\mu,\tilde{\mu})$ is locally integrable, because the functions
\begin{equation*}
\zeta_{k}[\null_{k}\eta]_{\frac{p}{1-\alpha_{k}}}\quad\text{and}\quad \hat{\zeta}_{k}^{2}[\null_{k}\hat{\eta}]_{\frac{p}{1-\alpha_{k}}}^{2}
\end{equation*}
possess this property for each $k\in\{1,\dots,l\}$, as another application of Young's inequality shows. In particular, if $\beta = 0$, then there is no dependence on the measure variable to consider and $\Theta = 0$.

\begin{Proposition}\label{pr:specific stability moment estimate}
Let~\eqref{con:4} hold, $X$ and $\tilde{X}$ be solutions to~\eqref{eq:McKean-Vlasov} with respective coefficients $(\B,\Sigma)$ and $(\tilde{\B},\tilde{\Sigma})$ such that $E[|Y_{t_{0}}|^{p}] < \infty$ for $Y:=X-\tilde{X}$ and $\Theta(\cdot,P_{X},P_{\tilde{X}})$ be locally integrable. Then
\begin{equation}\label{eq:specific stability moment estimate}
E\big[|Y_{t}|^{p}\big] \leq e^{\int_{t_{0}}^{t}\gamma_{p,\mathscr{P}}(s)\,ds}E\big[|Y_{t_{0}}|^{p}\big] + \int_{t_{0}}^{t}e^{\int_{s}^{t}\gamma_{p,\mathscr{P}}(\tilde{s})\,d\tilde{s}}\delta_{p,\mathscr{P}}(s)\,ds
\end{equation}
for all $t\in [t_{0},\infty)$. In particular, if $\gamma_{p,\mathscr{P}} ^{+}$ and $\delta_{p,\mathscr{P}}$ are integrable, then $E[|Y|^{p}]$ is bounded. If in addition $\gamma_{p,\mathscr{P}}^{-}$ fails to be integrable, then
\begin{equation*}
\lim_{t\uparrow\infty} E\big[|Y_{t}|^{p}\big] = 0.
\end{equation*}
\end{Proposition}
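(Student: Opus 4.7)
The plan is to apply It{\^o}'s formula to $|Y_t|^p$, localize to remove the martingale part, bound the resulting drift and quadratic terms via~\eqref{con:4}, the domination condition~\eqref{eq:domination condition}, and the elementary inequality~\eqref{eq:essential inequality}, and then close by Gronwall's lemma. Since $p\geq 2$, the map $x\mapsto |x|^{p}$ lies in $C^{2}(\R^{m})$ with Hessian $p|x|^{p-2}I + p(p-2)|x|^{p-4}xx'$, so using $|A'x|^{2}\leq |A|^{2}|x|^{2}$ to bound the trace term involving the quadratic covariation yields the pathwise inequality
\[
d|Y_{s}|^{p} \leq p|Y_{s}|^{p-2}Y_{s}'\big(\B_{s}(X_{s},P_{X_{s}})-\tilde{\B}_{s}(\tilde{X}_{s},P_{\tilde{X}_{s}})\big)\,ds + dM_{s} + pc_{p}|Y_{s}|^{p-2}|\Sigma_{s}(X_{s},P_{X_{s}})-\tilde{\Sigma}_{s}(\tilde{X}_{s},P_{\tilde{X}_{s}})|^{2}\,ds,
\]
where $c_{p}=(p-1)/2$ and $M$ is a local martingale. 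Substituting the bounds from~\eqref{con:4} and expanding the square in the diffusion term produces a sum of expressions of the form $|Y_{s}|^{p-\gamma}$ times a random weight times $\vartheta(P_{X_{s}},P_{\tilde{X}_{s}})^{\beta}$.

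Next I would localize via $\tau_{n}:=\inf\{s\geq t_{0}: |Y_{s}|\geq n\}$ to kill the martingale, take expectations, and apply $\vartheta(P_{X_{s}},P_{\tilde{X}_{s}})^{\beta}\leq c_{p,\mathscr{P}}^{\beta}E[|Y_{s}|^{p}]^{\beta/p}$ for each exponent $\beta$ that appears. The inequality~\eqref{eq:essential inequality} is then invoked with $(\gamma,\beta,x,X) = (1-\alpha_{k},\beta_{k},\zeta_{k},{}_{k}\eta_{s})$ for each drift contribution and $(\gamma,\beta,x,X) = (2-\alpha_{j}-\alpha_{k},\beta_{j}+\beta_{k},\hat{\zeta}_{j}\hat{\zeta}_{k},{}_{j}\hat{\eta}_{s}\,{}_{k}\hat{\eta}_{s})$ for each diffusion contribution; collecting terms reproduces precisely the factor in front of $E[|Y_{s}|^{p}]$ given by $\gamma_{p,\mathscr{P}}$ in~\eqref{eq:specific stability coefficient} and the inhomogeneous term $\delta_{p,\mathscr{P}}$ in~\eqref{eq:drift stability coefficient}. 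Letting $n\to\infty$ via Fatou's lemma, where the local integrability of $\Theta(\cdot,P_{X},P_{\tilde{X}})$ keeps the time integral of the right-hand side finite, one obtains the integral version of
\[
\frac{d}{dt}E[|Y_{t}|^{p}] \leq \gamma_{p,\mathscr{P}}(t)\,E[|Y_{t}|^{p}] + \delta_{p,\mathscr{P}}(t),
\]
and Gronwall's lemma delivers~\eqref{eq:specific stability moment estimate}.

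The main obstacle is bookkeeping at the boundary case $\alpha_{k}+\beta_{k}=1$: here~\eqref{eq:essential inequality} enforces $x=1$, which is exactly matched by the hypothesis $\zeta_{k}=\hat{\zeta}_{k}=1$ in~\eqref{con:4} and, via the convention $1^{\infty}=1$, makes the apparent error term $(1-\alpha_{k}-\beta_{k})\zeta_{k}^{p/(1-\alpha_{k}-\beta_{k})}$ in $\delta_{p,\mathscr{P}}$ vanish. A secondary subtlety is local boundedness of $t\mapsto E[|Y_{t}|^{p}]$ prior to invoking Gronwall; this falls out of the same scheme by first working with the truncated quantity $E[|Y_{t\wedge\tau_{n}}|^{p}]$, which is bounded by $n^{p}$. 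For the asymptotic assertions, integrability of $\gamma_{p,\mathscr{P}}^{+}$ and $\delta_{p,\mathscr{P}}$ immediately bounds both summands on the right of~\eqref{eq:specific stability moment estimate}, while non-integrability of $\gamma_{p,\mathscr{P}}^{-}$ gives $\int_{t_{0}}^{t}\gamma_{p,\mathscr{P}}(s)\,ds\to-\infty$; splitting $\int_{t_{0}}^{t}e^{\int_{s}^{t}\gamma_{p,\mathscr{P}}(\tilde{s})\,d\tilde{s}}\delta_{p,\mathscr{P}}(s)\,ds$ over $[t_{0},T]\cup[T,t]$ with $T$ large enough so that $\int_{T}^{\infty}\delta_{p,\mathscr{P}}<\varepsilon$ then yields the zero limit.
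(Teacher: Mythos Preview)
Your approach is essentially the paper's: the paper reduces the proposition to the general $L^{p}$-estimate for It\^o processes (Theorem~\ref{th:stability moment estimate}), whose proof is exactly It\^o's formula for $|Y|^{p}$, localization, the inequality~\eqref{eq:essential inequality}, and a Gronwall-type closure via the integrating factor $u(t)=\exp(-\int_{t_{0}}^{t}\gamma_{p,\mathscr{P}}(s)\,ds)$. You do the same computation directly rather than through the abstract lemmas.

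There is, however, a circularity in your handling of local boundedness. After localizing at $\tau_{n}$ and taking expectations, the bound still contains $\theta_{s}:=\vartheta(P_{X_{s}},P_{\tilde{X}_{s}})$, and~\eqref{eq:domination condition} gives $\theta_{s}\leq c_{p,\mathscr{P}}E[|Y_{s}|^{p}]^{1/p}$ with the \emph{untruncated} $Y_{s}$. Substituting this before $E[|Y_{s}|^{p}]<\infty$ is known yields an inequality for $E[|Y_{t}^{\tau_{n}}|^{p}]$ whose right-hand side depends on the possibly infinite full moment $E[|Y_{s}|^{p}]$, so Gronwall does not close; the observation that $E[|Y_{t\wedge\tau_{n}}|^{p}]\leq n^{p}$ does not help here since $\theta_{s}$ is not controlled by the stopped process. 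The paper's proof of Theorem~\ref{th:stability moment estimate} resolves this by a two-pass argument: in the first pass one keeps $\theta_{s}^{\beta_{k}}$ unsubstituted and applies~\eqref{eq:essential inequality} only in the $|Y|$-variable, producing auxiliary coefficients $\hat{\gamma}_{p},\hat{\delta}_{p}$ that depend on $\theta$; the assumed local integrability of $\Theta(\cdot,P_{X},P_{\tilde{X}})$ then makes this a closed Gronwall inequality for $E[|Y^{\tau_{n}}|^{p}]$ uniformly in $n$, and Fatou's lemma gives local boundedness of $E[|Y|^{p}]$. Only in the second pass does one substitute $\theta_{s}\leq c_{p,\mathscr{P}}E[|Y_{s}|^{p}]^{1/p}$ and invoke~\eqref{eq:essential inequality} in full to produce $\gamma_{p,\mathscr{P}}$ and $\delta_{p,\mathscr{P}}$. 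Your sentence about $\Theta$ keeping ``the time integral of the right-hand side finite'' belongs to this first pass, not to the step where $\gamma_{p,\mathscr{P}}$ and $\delta_{p,\mathscr{P}}$ already appear.
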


\begin{Remark}
The term $\delta_{p,\mathscr{P}}$ contains all the coefficients $\zeta_{k}$ and $\hat{\zeta}_{k}$, where $k\in\{1,\dots,l\}$, and is based on all H\oe lder exponents in $(0,1)$ appearing in~\eqref{con:4} in the following sense: $\delta_{p,\mathscr{P}}(s) = 0$ for fixed $s\in [t_{0},\infty)$ if and only if both conditions
\begin{equation*}
\zeta_{k}(s) = 0\quad\text{or}\quad \null_{k}\eta_{s} \leq 0\quad\text{a.s.}\quad\text{and}\quad \hat{\zeta}_{k}(s) = 0\quad\text{or}\quad \null_{k}\hat{\eta}_{s} = 0\quad\text{a.s.}
\end{equation*}
hold for every $k\in\{1,\dots,l\}$ with $\alpha_{k} + \beta_{k} < 1$.
\end{Remark}

Until the end of this section let $(\B,\Sigma)=(\tilde{\B},\tilde{\Sigma})$. Then~\eqref{con:4} turns into a \emph{mixed H\oe lder continuity condition} for $\B$ and $\Sigma$ if all the error terms disappear, that is, $\alpha_{k} + \beta_{k} > 0$ for any $k\in\{1,\dots,l\}$. Noteworthy, even if these expressions are in place, stability still follows.

\begin{Corollary}\label{co:moment stability}
Let~\eqref{con:4} be valid. Then~\eqref{eq:McKean-Vlasov} is (asymptotically) stable in $p$-th moment with respect to $\Theta$ if $\gamma_{p,\mathscr{P}}^{+}$ and $\delta_{p,\mathscr{P}}$ are integrable (and $\int_{t_{0}}^{\infty}\gamma_{p,\mathscr{P}}^{-}(s)\,ds = \infty)$.
\end{Corollary}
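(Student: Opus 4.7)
The proof is a direct consequence of Proposition~\ref{pr:specific stability moment estimate}, followed by elementary asymptotic analysis of the bound~\eqref{eq:specific stability moment estimate}. The plan is the following.

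First, I would fix two solutions $X$ and $\tilde{X}$ with $E[|Y_{t_{0}}|^{p}] < \infty$ for $Y:=X-\tilde{X}$, such that the function in~\eqref{eq:integrable function} (arising from the functional $\Theta$ in~\eqref{eq:integrability functional}) is locally integrable. This is exactly the hypothesis of Proposition~\ref{pr:specific stability moment estimate}, so it yields
\begin{equation*}
E\big[|Y_{t}|^{p}\big] \leq e^{\int_{t_{0}}^{t}\gamma_{p,\mathscr{P}}(s)\,ds}E\big[|Y_{t_{0}}|^{p}\big] + \int_{t_{0}}^{t}e^{\int_{s}^{t}\gamma_{p,\mathscr{P}}(\tilde{s})\,d\tilde{s}}\delta_{p,\mathscr{P}}(s)\,ds
\end{equation*}
for every $t\in [t_{0},\infty)$.

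For the standard $p$-th moment stability statement I would simply dominate $\gamma_{p,\mathscr{P}} \leq \gamma_{p,\mathscr{P}}^{+}$ in every appearing exponent. Under the assumed integrability of $\gamma_{p,\mathscr{P}}^{+}$ and $\delta_{p,\mathscr{P}}$, this bounds the exponential factor by the constant $C:=\exp(\int_{t_{0}}^{\infty}\gamma_{p,\mathscr{P}}^{+}(s)\,ds)$ uniformly in $s\leq t$, so that
\begin{equation*}
\sup_{t\in [t_{0},\infty)} E\big[|Y_{t}|^{p}\big] \leq C\,E\big[|Y_{t_{0}}|^{p}\big] + C\int_{t_{0}}^{\infty}\delta_{p,\mathscr{P}}(s)\,ds < \infty,
\end{equation*}
giving stability in $p$-th moment with respect to $\Theta$ in the sense of Definition~\ref{de:stability}~(i).

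For asymptotic stability I would additionally use that $\int_{t_{0}}^{\infty}\gamma_{p,\mathscr{P}}^{-}(s)\,ds=\infty$ together with integrability of $\gamma_{p,\mathscr{P}}^{+}$ to conclude $\int_{t_{0}}^{t}\gamma_{p,\mathscr{P}}(s)\,ds \to -\infty$ as $t\uparrow\infty$, so that the first summand in the bound vanishes. The expected obstacle is the convolution-type second summand, and I would handle it by the usual splitting argument: for any $\varepsilon>0$, pick $T>t_{0}$ large enough that $C\int_{T}^{\infty}\delta_{p,\mathscr{P}}(s)\,ds < \varepsilon/2$, and then for $t>T$ split the integral at $T$. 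On $[T,t]$ the exponential is bounded by $C$, so the contribution is $<\varepsilon/2$. On $[t_{0},T]$ we write
\begin{equation*}
\int_{t_{0}}^{T}e^{\int_{s}^{t}\gamma_{p,\mathscr{P}}(\tilde{s})\,d\tilde{s}}\delta_{p,\mathscr{P}}(s)\,ds \leq e^{\int_{T}^{t}\gamma_{p,\mathscr{P}}(\tilde{s})\,d\tilde{s}}\, C\int_{t_{0}}^{T}\delta_{p,\mathscr{P}}(s)\,ds,
\end{equation*}
and since $\int_{T}^{t}\gamma_{p,\mathscr{P}}(\tilde{s})\,d\tilde{s}\to -\infty$ as $t\uparrow\infty$, this expression is eventually smaller than $\varepsilon/2$. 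Combining both pieces yields $\lim_{t\uparrow\infty}E[|Y_{t}|^{p}]=0$, which is asymptotic stability in $p$-th moment.
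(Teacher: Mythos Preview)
Your proof is correct and follows the same route as the paper, which simply observes that both stability claims are immediate from Proposition~\ref{pr:specific stability moment estimate} in view of Definition~\ref{de:stability}. Note that you actually re-derive the ``In particular'' part of Proposition~\ref{pr:specific stability moment estimate} (boundedness of $E[|Y|^{p}]$ and its convergence to zero), which is already stated there, so your argument is more detailed than necessary but certainly not wrong.
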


To analyse the \emph{$L^{p}$-boundedness} and the \emph{rate of $L^{p}$-convergence for solutions} in the succeeding Corollary~\ref{co:exponential moment stability}, we strengthen~\eqref{con:4} to a \emph{partial Lipschitz condition on $\B$} and a \emph{complete Lipschitz condition on $\Sigma$}:
\begin{enumerate}[label=(C.\arabic*), ref=C.\arabic*, leftmargin=\widthof{(C.5)} + \labelsep]
\setcounter{enumi}{4}
\item\label{con:5} There are $\eta_{1}\in\mathscr{L}_{loc}^{1}(\R)$, $\null_{2}\eta\in\mathscr{S}_{loc}^{p}(\R_{+})$, $\null_{1}\hat{\eta}\in\mathscr{S}_{2,loc}^{\infty}(\R_{+})$ and $\null_{2}\hat{\eta}\in\mathscr{P}_{2,loc}^{p}(\R_{+})$ such that
\begin{equation*}
(x-\tilde{x})'\big(\B(x,\mu) - \B(\tilde{x},\tilde{\mu})\big) \leq |x-\tilde{x}|\big(\eta_{1}|x-\tilde{x}| + \null_{2}\eta\vartheta(\mu,\tilde{\mu})\big)
\end{equation*}
and $|\Sigma(x,\mu) - \Sigma(\tilde{x},\tilde{\mu})| \leq \null_{1}\hat{\eta}|x-\tilde{x}| + \null_{2}\hat{\eta}\vartheta(\mu,\tilde{\mu})$ for any $x,\tilde{x}\in \R^{m}$ and all $\mu,\tilde{\mu}\in\mathscr{P}$ a.e.~on $[t_{0},\infty)$ a.s.
\end{enumerate}

Let us suppose that~\eqref{con:5} holds, in which case~\eqref{con:4} follows for $l=2$, $\alpha=(1,0)$ and $\beta=(0,1)$, as Example~\ref{ex:error and Hoelder condition} shows. Thus, the function $\delta_{p,\mathscr{P}}$ in~\eqref{eq:drift stability coefficient} is identically zero and the formula for the \emph{stability coefficient} $\gamma_{p,\mathscr{P}}$ in~\eqref{eq:specific stability coefficient} reduces to
\begin{equation}\label{eq:specific stability coefficient 2}
\frac{\gamma_{p,\mathscr{P}}}{p} = \eta_{1} + c_{p,\mathscr{P}}[\null_{2}\eta]_{p} + c_{p}\big([\null_{1}\hat{\eta}]_{\infty}^{2} + 2c_{p,\mathscr{P}}[\null_{1}\hat{\eta}\null_{2}\hat{\eta}]_{p} + c_{p,\mathscr{P}}^{2}[\null_{2}\hat{\eta}]_{p}^{2}\big).
\end{equation}
Thereby, we recall the fact that $\null_{1}\hat{\eta} = \hat{\kappa}$ for some $\hat{\kappa}\in\mathscr{L}_{loc}^{2}(\R_{+})$ is possible in~\eqref{con:5}. Moreover, the functional~\eqref{eq:integrability functional} is of the form
\begin{equation}\label{eq:integrability functional 2}
\Theta(\cdot,\mu,\tilde{\mu}) = [\null_{2}\eta]_{p}\vartheta(\mu,\tilde{\mu}) + [\null_{2}\hat{\eta}]_{p}^{2}\vartheta(\mu,\tilde{\mu}) ^{2}
\end{equation}
for all $\mu,\tilde{\mu}\in\mathscr{P}$. Hence, by means of the following \emph{upper bound on $\gamma_{p,\mathscr{P}}$ that involves sums of power functions} we derive \emph{exponential moment stability}.
\begin{enumerate}[label=(C.\arabic*), ref=C.\arabic*, leftmargin=\widthof{(C.6)} + \labelsep]
\setcounter{enumi}{5}
\item\label{con:6} Condition~\eqref{con:5} is satisfied and there are $l\in\N$, $\alpha\in (0,\infty)^{l}$ and $\hat{\lambda},s\in\R^{l}$ with $\alpha_{1} < \cdots < \alpha_{l}$ and $\hat{\lambda}_{l} < 0 $ such that
\begin{equation*}
\gamma_{p,\mathscr{P}}(s) \leq \sum_{k=1}^{l}\hat{\lambda}_{k}\alpha_{k}(s-s_{k})^{\alpha_{k}-1}\quad\text{for a.e.~$s\in [t_{1},\infty)$}
\end{equation*}
for some $t_{1}\in [t_{0},\infty)$ with $\max_{k\in\{1,\dots,l\}} s_{k} \leq t_{1}$.
\end{enumerate}

Based on the fact that the preceding condition implies the existence of some $\hat{t}_{1}\in [t_{0},\infty)$ such that $\gamma_{p,\mathscr{P}} < 0$ a.e.~on $[\hat{t}_{1},\infty)$, we state the just mentioned stability properties. 

\begin{Corollary}\label{co:exponential moment stability}
The following two assertions hold:
\begin{enumerate}[(i)]
\item Suppose that~\eqref{con:5} is valid and $\gamma_{p,\mathscr{P}}^{+}$ is integrable. Then for the difference $Y$ of any two solutions $X$ and $\tilde{X}$ to~\eqref{eq:McKean-Vlasov} we have
\begin{equation*}
\sup_{t\in [t_{0},\infty)} e^{\int_{t_{0}}^{t}\gamma_{p,\mathscr{P}}^{-}(s)\,ds}E\big[|Y_{t}|^{p}\big] < \infty,
\end{equation*}
assuming that $E[|Y_{t_{0}}|^{p}] < \infty$ and $\Theta(\cdot,P_{X},P_{\tilde{X}})$ is locally integrable. Moreover, if in addition $\int_{t_{0}}^{\infty}\gamma_{p,\mathscr{P}}^{-}(s)\,ds = \infty$, then 
\begin{equation*}
\lim_{t\uparrow\infty} e^{\alpha\int_{t_{0}}^{t}\gamma_{p,\mathscr{P}}^{-}(s)\,ds}E\big[|Y_{t}|^{p}\big] = 0\quad\text{for all $\alpha\in [0,1)$}.
\end{equation*}
\item Let~\eqref{con:6} be satisfied. Then~\eqref{eq:McKean-Vlasov} is $\alpha_{l}$-exponentially stable in $p$-th moment with respect to $\Theta$ with any moment $\alpha_{l}$-Lyapunov exponent in $(\hat{\lambda}_{l},0)$, and $\hat{\lambda}_{l}$ is a Lyapunov exponent provided
\begin{equation*}
\max_{k\in\{1,\dots,l\}} \hat{\lambda}_{k} \leq 0 \quad\text{and}\quad s_{l} \leq t_{0}.
\end{equation*}
\end{enumerate}
\end{Corollary}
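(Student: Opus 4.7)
My plan is to derive both assertions directly from the explicit $L^{p}$-bound in Proposition~\ref{pr:specific stability moment estimate}. As noted right after~\eqref{con:5}, that condition implies~\eqref{con:4} with $l=2$, $\alpha=(1,0)$, $\beta=(0,1)$, $\zeta=\hat{\zeta}=(1,1)$, and the formula~\eqref{eq:drift stability coefficient} then forces $\delta_{p,\mathscr{P}}\equiv 0$, while $\gamma_{p,\mathscr{P}}$ reduces to~\eqref{eq:specific stability coefficient 2}. Moreover, $\Theta(\cdot,P_{X},P_{\tilde{X}})$ takes the form~\eqref{eq:integrability functional 2}, so the local integrability hypothesis in Proposition~\ref{pr:specific stability moment estimate} is exactly the one in the corollary. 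Hence~\eqref{eq:specific stability moment estimate} reduces for $Y=X-\tilde{X}$ to
\begin{equation*}
E\big[|Y_{t}|^{p}\big] \le \exp\!\bigg(\int_{t_{0}}^{t}\gamma_{p,\mathscr{P}}(s)\,ds\bigg)E\big[|Y_{t_{0}}|^{p}\big]\quad\text{for all $t\in[t_{0},\infty)$.}
\end{equation*}

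For assertion~(i), I would write $\gamma_{p,\mathscr{P}}=\gamma_{p,\mathscr{P}}^{+}-\gamma_{p,\mathscr{P}}^{-}$ and multiply the above bound by $\exp(\int_{t_{0}}^{t}\gamma_{p,\mathscr{P}}^{-}(s)\,ds)$ to obtain
\begin{equation*}
\exp\!\bigg(\int_{t_{0}}^{t}\gamma_{p,\mathscr{P}}^{-}(s)\,ds\bigg)E\big[|Y_{t}|^{p}\big] \le \exp\!\bigg(\int_{t_{0}}^{\infty}\gamma_{p,\mathscr{P}}^{+}(s)\,ds\bigg)E\big[|Y_{t_{0}}|^{p}\big],
\end{equation*}
which is finite by the integrability of $\gamma_{p,\mathscr{P}}^{+}$. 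For the convergence claim, for $\alpha\in[0,1)$ I rewrite
\begin{equation*}
\exp\!\bigg(\alpha\int_{t_{0}}^{t}\gamma_{p,\mathscr{P}}^{-}(s)\,ds\bigg)E\big[|Y_{t}|^{p}\big] \le \exp\!\bigg(\int_{t_{0}}^{\infty}\gamma_{p,\mathscr{P}}^{+}(s)\,ds\bigg)\exp\!\bigg(-(1-\alpha)\int_{t_{0}}^{t}\gamma_{p,\mathscr{P}}^{-}(s)\,ds\bigg)E\big[|Y_{t_{0}}|^{p}\big]
\end{equation*}
and since $1-\alpha>0$ and $\int_{t_{0}}^{\infty}\gamma_{p,\mathscr{P}}^{-}(s)\,ds=\infty$, the right-hand side vanishes as $t\uparrow\infty$.

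For assertion~(ii), I would first integrate the pointwise bound in~\eqref{con:6} on $[t_{1},t]$, using that $\alpha_{k}(s-s_{k})^{\alpha_{k}-1}$ is the derivative of $(s-s_{k})^{\alpha_{k}}$, to get
\begin{equation*}
\int_{t_{1}}^{t}\gamma_{p,\mathscr{P}}(s)\,ds \le \sum_{k=1}^{l}\hat{\lambda}_{k}\big[(t-s_{k})^{\alpha_{k}}-(t_{1}-s_{k})^{\alpha_{k}}\big].
\end{equation*}
Adding the finite constant $\int_{t_{0}}^{t_{1}}\gamma_{p,\mathscr{P}}(s)\,ds$, which exists by local integrability, absorbs all terms independent of $t$ into a single constant. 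For any $\lambda\in(\hat{\lambda}_{l},0)$, the key point is that $\alpha_{l}$ is strictly larger than every other $\alpha_{k}$, so $\hat{\lambda}_{l}(t-s_{l})^{\alpha_{l}}$ dominates the sum; concretely, since $(t-s_{k})^{\alpha_{k}}/(t-t_{0})^{\alpha_{l}}\to 0$ for $k<l$ and $(t-s_{l})^{\alpha_{l}}/(t-t_{0})^{\alpha_{l}}\to 1$, there is $c_{1}\in\R_{+}$ such that $\int_{t_{0}}^{t}\gamma_{p,\mathscr{P}}(s)\,ds\le c_{1}+\lambda(t-t_{0})^{\alpha_{l}}$ for all $t\geq t_{0}$. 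Inserting this into the $L^{p}$-bound and noting that $\Theta$-local integrability is built into the stability definition gives~\eqref{eq:exponential p-th moment stability} with constant $e^{c_{1}}$, so $\lambda$ is a moment $\alpha_{l}$-Lyapunov exponent.

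For the refinement yielding $\hat{\lambda}_{l}$ itself as an exponent, I would exploit $\max_{k}\hat{\lambda}_{k}\le 0$ and $s_{l}\le t_{0}$ (hence $s_{k}\le t_{1}$ as given). Then $(t-s_{l})^{\alpha_{l}}\ge(t-t_{0})^{\alpha_{l}}$, and since $\hat{\lambda}_{l}<0$, this yields $\hat{\lambda}_{l}(t-s_{l})^{\alpha_{l}}\le\hat{\lambda}_{l}(t-t_{0})^{\alpha_{l}}$, while each remaining term $\hat{\lambda}_{k}(t-s_{k})^{\alpha_{k}}$ with $k<l$ is non-positive. Combining with the constant from $[t_{0},t_{1}]$ gives $\int_{t_{0}}^{t}\gamma_{p,\mathscr{P}}(s)\,ds\le c_{2}+\hat{\lambda}_{l}(t-t_{0})^{\alpha_{l}}$, from which~\eqref{eq:exponential p-th moment stability} follows with $\lambda=\hat{\lambda}_{l}$. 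The only mildly subtle point is the asymptotic comparison in the generic case: ensuring that the mixed exponents $\alpha_{k}<\alpha_{l}$ and shifts $s_{k}\neq s_{l}$ can be uniformly absorbed into an arbitrarily small upward correction of $\hat{\lambda}_{l}$, which I handle by the elementary limit argument above.
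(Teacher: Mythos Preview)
Your proof is correct and follows essentially the same route as the paper: for~(i) you unpack directly what the paper packages as Remark~\ref{re:moment convergence}, and for~(ii) your asymptotic comparison of $\sum_{k}\hat{\lambda}_{k}(t-s_{k})^{\alpha_{k}}$ against $\lambda(t-t_{0})^{\alpha_{l}}$ is exactly the paper's limsup argument combined with Remark~\ref{re:exponential moment stability}, while for the refinement the paper reduces to $l=1$ (dropping the non-positive terms) whereas you keep general $l$ and drop them at the end---the same inequality either way.
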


To demonstrate how the coefficients in~\eqref{eq:specific stability coefficient} and~\eqref{eq:drift stability coefficient} can be computed and facilitate access to the preceding results, let us consider certain sums of affine and integral maps.

\begin{Example}\label{ex:stochastic integral maps}
Suppose that $\kappa\in\mathscr{S}(\R^{m})$, $\eta\in\mathscr{S}(\R^{m\times m})$ and $\hat{\kappa},\hat{\eta}\in\mathscr{S}(\R^{m\times d})$. Further, we take $N\in\N$ and $\hat{m}\in\N^{N}$ and for each $n\in\{1,\dots,N\}$ let
\begin{equation*}
\null_{n}\eta\in\mathscr{S}(\R_{+}^{m}),\quad \null_{n}\hat{\eta}\in\mathscr{S}(\R^{m\times\hat{m}_{n}})
\end{equation*}
and $f_{n}$ and $g_{n}$ be two Borel measurable maps on $\R^{m}\times\R^{m}$ with values in $\R^{m}$ and $\R^{\hat{m}_{n}\times d}$, respectively, such that $f_{n}(x,\cdot)$ and $g_{n}(x,\cdot)$ are $\mu$-integrable and
\begin{equation}\label{eq:stochastic integral maps 1}
\B(x,\mu) = \kappa + \eta x + \sum_{n=1}^{N} \mathrm{diag}(\null_{n}\eta)\int_{\R^{m}}f_{n}(x,y)\,\mu(dy)
\end{equation}
and
\begin{equation}\label{eq:stochastic integral maps 2}
\Sigma(x,\mu) = \hat{\kappa} + \mathrm{diag}(x)\hat{\eta} +  \sum_{n=1}^{N} \null_{n}\eta\int_{\R^{m}}g_{n}(x,y)\,\mu(dy)
\end{equation}
for all $(x,\mu)\in\R^{m}\times\mathscr{P}$. Then~\eqref{con:4} is satisfied for the subsequent choice $\vartheta=\vartheta_{1}$ under the following three conditions:
\begin{enumerate}[(1)]
\item There is $\zeta_{\eta}\in\mathscr{L}_{loc}^{1}(\R)$ such that $x'\eta x \leq \zeta_{\eta}|x|^{2}$ for all $x\in\R^{m}$ a.e.~on $[t_{0},\infty)$ a.s.~and $[|\hat{\eta}|]_{\infty}^{2}$ is locally integrable.
\item For $\alpha,\beta\in ]0,1]$ there exists $\lambda_{1},\dots,\lambda_{N}\in\R^{m}$ and $\upsilon_{1},\dots,\upsilon_{N}\in\R_{+}^{m}$ such that
\begin{equation}\label{eq:main example estimate 1}
\mathrm{sgn}(x_{i} - \tilde{x}_{i})\big(f_{n,i}(x,y) - f_{n,i}(\tilde{x},\tilde{y})\big) \leq \lambda_{n,i}|x-\tilde{x}|^{\alpha} + \upsilon_{n,i}|y-\tilde{y}|^{\beta}
\end{equation}
for all $x,\tilde{x},y,\tilde{y}\in\R^{m}$ and every $(n,i)\in\{1,\dots,N\}\times\{1,\dots,m\}$. Furthermore, $\null_{1}\tilde{\eta}:=\sum_{n=1}^{N}\null_{n}\eta'\lambda_{n}$ and $\null_{2}\tilde{\eta}:=\sum_{n=1}^{N}\null_{n}\eta'\upsilon_{n}$ lie in $\mathscr{S}_{loc}^{\frac{p}{1-\alpha}}(\R)$ and $\mathscr{S}_{loc}^{p}(\R_{+})$, respectively.
\item For $\hat{\alpha},\hat{\beta}\in ]0,1]$ there are $\hat{\lambda}_{1},\dots,\hat{\lambda}_{N},\hat{\upsilon}_{1},\dots,\hat{\upsilon}_{N}\in\R_{+}$ such that
\begin{equation}\label{eq:main example estimate 2}
|g_{n}(x,y) - g_{n}(\tilde{x},\tilde{y})|\leq \hat{\lambda}_{n}|x-\tilde{x}|^{\hat{\alpha}} + \hat{\upsilon}_{n}|y-\tilde{y}|^{\hat{\beta}}
\end{equation}
for all $x,\tilde{x},y,\tilde{y}\in\R^{m}$ and each $n\in\{1\dots,N\}$. Moreover, $\null_{1}\overline{\eta}:=\sum_{n=1}^{N}|\null_{n}\hat{\eta}|\hat{\lambda}_{n}$ and $\null_{2}\overline{\eta}:=\sum_{n=1}^{N}|\null_{n}\hat{\eta}|\hat{\upsilon}_{n}$ belong to $\mathscr{S}_{2,loc}^{\frac{p}{1-\hat{\alpha}}}(\R_{+})$ and $\mathscr{S}_{2,loc}^{p}(\R_{+})$, respectively.
\end{enumerate}
Thus, under these requirements, Proposition~\ref{pr:specific stability moment estimate} and Corollaries~\ref{co:moment stability} and~\ref{co:exponential moment stability} entail the following assertions:
\begin{enumerate}[(1)]
\setcounter{enumi}{3}
\item The bound~\eqref{eq:specific stability moment estimate} holds for the difference $Y$ of any two solutions $X$ and $\tilde{X}$ to~\eqref{eq:McKean-Vlasov} for which $E[|Y_{t_{0}}|^{p}] < \infty$ and $ [\null_{2}\tilde{\eta}]_{p}\vartheta(P_{X},P_{\tilde{X}})^{\beta} + [\null_{2}\overline{\eta}]_{p}^{2}\vartheta(P_{X},P_{\tilde{X}})^{2\hat{\beta}}
$ is locally integrable, where
\begin{align*}
\gamma_{p,\mathscr{P}} &= p\zeta_{\eta} + (p-1+\alpha)\big[\null_{1}\tilde{\eta}\big]_{\frac{p}{1-\alpha}} + (p-1+\beta)c_{p,\mathscr{P}}^{\beta}\big[\null_{2}\tilde{\eta}\big]_{p}\\
&\quad + c_{p}\bigg(p\big[|\hat{\eta}|\big]_{\infty}^{2} + \big(p-2(1-\hat{\alpha})\big)\big[\null_{1}\overline{\eta}\big]_{\frac{p}{1-\hat{\alpha}}}^{2} + \big(p-2(1-\hat{\beta})\big)c_{p,\mathscr{P}}^{2\hat{\beta}}\big[\null_{2}\overline{\eta}\big]_{p}^{2}\bigg)\\
&\quad + 2c_{p}\bigg((p-1+\hat{\alpha})\big[|\hat{\eta}|\null_{1}\overline{\eta}\big]_{\frac{p}{1-\hat{\alpha}}} + (p - 1 + \hat{\beta})c_{p,\mathscr{P}}^{\hat{\beta}}\big[|\hat{\eta}|\null_{2}\overline{\eta}\big]_{p}\bigg)\\
&\quad + 2c_{p}(p-2+\hat{\alpha} + \hat{\beta})c_{p,\mathscr{P}}^{\hat{\beta}}\big[\null_{1}\overline{\eta}\null_{2}\overline{\eta}\big]_{\frac{p}{2-\hat{\alpha}}}
\end{align*}
and
\begin{align*}
\delta_{p,\mathscr{P}} &= (1-\alpha)\big[\null_{1}\tilde{\eta}\big]_{\frac{p}{1-\alpha}} + (1-\beta)c_{p,\mathscr{P}}^{\beta}\big[\null_{2}\tilde{\eta}\big]_{p} + 2c_{p}(1-\hat{\alpha})\bigg(\big[\null_{1}\overline{\eta}\big]_{\frac{p}{1-\hat{\alpha}}}^{2} + \big[|\hat{\eta}|\null_{1}\overline{\eta}\big]_{\frac{p}{1-\hat{\alpha}}}\bigg)\\
&\quad + 2c_{p}c_{p,\mathscr{P}}^{\hat{\beta}}\bigg((1-\hat{\beta})\bigg(c_{p,\mathscr{P}}^{\hat{\beta}}\big[\null_{2}\overline{\eta}\big]_{p}^{2} + \big[|\hat{\eta}|\null_{2}\overline{\eta}\big]_{p}\bigg) + (2-\hat{\alpha}-\hat{\beta})\big[\null_{1}\overline{\eta}\null_{2}\overline{\eta}\big]_{\frac{p}{2-\hat{\alpha}}}\bigg).
\end{align*}
\item Suppose that $(\zeta_{\eta} + [\null_{1}\tilde{\eta}]_{\frac{p}{1-\alpha}}\mathbbm{1}_{\{1\}}(\alpha))^{+}$, $[\null_{1}\tilde{\eta}]_{\frac{p}{1-\alpha}}\mathbbm{1}_{]0,1[}(\alpha)$,
\begin{equation*}
[\null_{2}\tilde{\eta}]_{p},\quad [|\hat{\eta}|]_{\infty}^{2},\quad [\null_{1}\overline{\eta}]_{\frac{p}{1-\hat{\alpha}}}^{2}\quad\text{and}\quad [\null_{2}\overline{\eta}]_{p}^{2}
\end{equation*}
are integrable. Then $E[|Y|^{p}]$ is bounded and we have $\lim_{t\uparrow\infty} E[|Y_{t}|^{p}] = 0$ as soon as the negative part of $\zeta_{\eta} + [\null_{1}\tilde{\eta}]_{\frac{p}{1-\alpha}}\mathbbm{1}_{\{1\}}(\alpha)$ fails to be integrable.
\item Let $\null_{1}\tilde{\eta}\leq 0$ in case that $\alpha < 1$ and assume additionally that $\beta=\hat{\alpha}=\hat{\beta}=1$, in which case~\eqref{con:5} is satisfied and we may define an $\R_{+}$-valued Borel measurable functional $\Theta$ on $[t_{0},\infty)\times\mathscr{P}^{2}$ via
\begin{equation*}
\Theta(\cdot,\mu,\tilde{\mu}) := \big[\null_{2}\tilde{\eta}\big]_{p}\vartheta(\mu,\tilde{\mu}) + \big[\null_{2}\overline{\eta}\big]_{p}^{2}\vartheta(\mu,\tilde{\mu})^{2}.
\end{equation*}
\begin{enumerate}[(i)]
\item If the positive part of $\zeta_{\eta} + [\null_{1}\tilde{\eta}]_{\frac{p}{1-\alpha}}\mathbbm{1}_{\{1\}}(\alpha)$ is integrable (and its negative part fails to be integrable), then~\eqref{eq:McKean-Vlasov} is (asymptotically) stable in $p$-th moment relative to $\Theta$.
\item If there exist $\tilde{\lambda} < 0$ and $\tilde{\alpha} > 0$ such that
\begin{align*}
&\zeta_{\eta} + \big[\null_{1}\tilde{\eta}\big]_{\frac{p}{1-\alpha}}\mathbbm{1}_{\{1\}}(\alpha) + c_{p,\mathscr{P}}\big[\null_{2}\tilde{\eta}\big]_{p} + c_{p}\bigg(\big[|\hat{\eta}|\big]_{\infty}^{2} + \big[\null_{1}\overline{\eta}\big]_{\infty}^{2} + c_{p,\mathscr{P}}^{2}\big[\null_{2}\overline{\eta}\big]_{p}^{2}\bigg)\\
&\quad + 2c_{p}\bigg(\big[|\hat{\eta}|\null_{1}\overline{\eta}\big]_{\infty} + c_{p,\mathscr{P}}\big[|\hat{\eta}|\null_{2}\overline{\eta}\big]_{p} + c_{p,\mathscr{P}}\big[\null_{1}\overline{\eta}\null_{2}\overline{\eta}\big]_{p}\bigg) \leq \frac{\tilde{\lambda}}{p}\tilde{\alpha}(s - t_{0})^{\tilde{\alpha} - 1}
\end{align*}
for a.e.~$s\in [t_{0},\infty)$, then~\eqref{eq:McKean-Vlasov} is $\tilde{\alpha}$-exponentially stable in $p$-th moment with respect to $\Theta$ with Lyapunov exponent $\tilde{\lambda}$.
\end{enumerate}
\end{enumerate}
\end{Example}

\subsection{Pathwise stability and moment growth bounds}\label{se:3.3}

In the first part of this section we establish pathwise exponential stability for~\eqref{eq:McKean-Vlasov}. In this regard, we restrict the partial Lipschitz condition~\eqref{con:5} to the case that all the regularity coefficients are deterministic and a certain growth estimate holds:
\begin{enumerate}[label=(C.\arabic*), ref=C.\arabic*, leftmargin=\widthof{(C.8)} + \labelsep]
\setcounter{enumi}{7}
\item\label{con:8} There are $\eta\in\mathscr{L}_{loc}^{1}(\R^{2})$ and $\hat{\eta}\in\mathscr{L}_{loc}^{2}(\R_{+}^{2})$ with $\eta_{2}\geq 0$ such that
\[
(x-\tilde{x})'\big(\B(x,\mu) - \B(\tilde{x},\tilde{\mu})\big) \leq |x-\tilde{x}|\big(\eta_{1}|x-\tilde{x}| + \eta_{2}\vartheta(\mu,\tilde{\mu})\big)
\]
and $|\Sigma(x,\mu) - \Sigma(\tilde{x},\tilde{\mu})| \leq \hat{\eta}_{1}|x-\tilde{x}| + \hat{\eta}_{2}\vartheta(\mu,\tilde{\mu})$ for every $x,\tilde{x}\in\R^{m}$ and all $\mu,\tilde{\mu}\in\mathscr{P}$ a.e.~on $[t_{0},\infty)$ a.s. Further, there is $\hat{\delta} > 0$ so that
\begin{equation*}
\sup_{t\in [t_{0},\infty)} \int_{t}^{t+\hat{\delta}} h(s)\,ds < \infty\quad\text{for each $h\in\{\eta_{2},\hat{\eta}_{1}^{2},\hat{\eta}_{2}^{2}\}$}.
\end{equation*}
\end{enumerate}

\begin{Remark}
The preceding growth estimate is automatically satisfied if the regularity coefficients $\eta_{2}$, $\hat{\eta}_{1}$ and $\hat{\eta}_{2}$ are in fact locally bounded.
\end{Remark}

Given $q\in [2,\infty)$ we shall just for the next two results assume that~\eqref{eq:domination condition} holds when $p$ is replaced by $pq$ but may fail to be valid for $p$. Thus, under~\eqref{con:8}, the stability coefficient $\gamma_{pq,\mathscr{P}}$ and the functional $\Theta$, which we considered in~\eqref{eq:specific stability coefficient 2} and~\eqref{eq:integrability functional 2} for $p$ instead of $pq$, can simply be written in the form 
\begin{equation}\label{eq:specific stability coefficient 3}
\frac{\gamma_{pq,\mathscr{P}}}{pq} = \eta_{1} + c_{pq,\mathscr{P}}\eta_{2} + c_{pq}\big(\hat{\eta}_{1}^{2} + 2c_{pq,\mathscr{P}}\hat{\eta}_{1}\hat{\eta}_{2} + c_{pq,\mathscr{P}}^{2}c_{pq}\hat{\eta}_{2}^{2}\big) 
\end{equation}
and $\Theta(\cdot,\mu,\tilde{\mu}) = \eta_{2}\vartheta(\mu,\tilde{\mu}) + \hat{\eta}_{2}^{2}\vartheta(\mu,\tilde{\mu})^{2}$ for every $\mu,\tilde{\mu}\in\mathscr{P}$.  In addition, we impose the following abstract condition on $\gamma_{pq,\mathscr{P}}$ to deduce a \emph{general pathwise stability bound} from an application of Theorem~\ref{th:pathwise stability}, a pathwise result for random It{\^o} processes.
\begin{enumerate}[label=(C.\arabic*), ref=C.\arabic*, leftmargin=\widthof{(C.9)} + \labelsep]
\setcounter{enumi}{8}
\item\label{con:9} Condition~\eqref{con:8} is valid and there exist $\hat{\varepsilon}\in (0,1)$ and some strictly increasing sequence $(t_{n})_{n\in\N}$ in $[t_{0},\infty)$ such that $\gamma_{pq,\mathscr{P}} \leq 0$ a.e.~on $[t_{1},\infty)$,
\begin{equation*}
\sup_{n\in\N} (t_{n+1}-t_{n}) < \hat{\delta},\quad \lim_{n\uparrow\infty} t_{n} = \infty
\end{equation*}
and $\sum_{n=1}^{\infty}\exp(\varepsilon(pq)^{-1}\int_{t_{1}}^{t_{n}}\gamma_{pq,\mathscr{P}}(s)\,ds) < \infty$ for each $\varepsilon\in (0,\hat{\varepsilon})$.
\end{enumerate}

\begin{Proposition}\label{pr:pathwise stability}
Let~\eqref{con:9} be valid and $X$ and $\tilde{X}$ be two solutions to~\eqref{eq:McKean-Vlasov} for which $\Theta(\cdot,P_{X},P_{\tilde{X}})$ is locally integrable. Then for $Y:=X-\tilde{X}$ we have
\begin{equation*}
\limsup_{t\uparrow\infty}\frac{1}{\varphi(t)}\log(|Y_{t}|) \leq \frac{1}{pq}\limsup_{n\uparrow\infty}\frac{1}{\varphi(t_{n})}\int_{t_{1}}^{t_{n}}\gamma_{pq,\mathscr{P}}(s)\,ds\quad\text{a.s.}
\end{equation*}
for each increasing function $\varphi:[t_{1},\infty)\rightarrow\R_{+}$ that is positive on $(t_{1},\infty)$, under the condition that $E[|Y_{t_{0}}|^{pq}] < \infty$ or $\eta_{2} = \hat{\eta}_{2} = 0$.
\end{Proposition}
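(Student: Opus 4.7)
The plan is to reduce the claim to Theorem~\ref{th:pathwise stability}, the abstract pathwise asymptotic result for random It\^o processes from Section~\ref{se:4}, by verifying its hypotheses with the process $|Y|^{pq}$ and rate coefficient $\gamma_{pq,\mathscr{P}}$, where $Y:=X-\tilde{X}$.

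First I would apply It\^o's formula to $|Y|^{pq}$, using the standard $C^{2}$-regularisation of $y\mapsto|y|^{pq}$ near the origin. Under~\eqref{con:8} the same Young-inequality bookkeeping that produced the expression~\eqref{eq:specific stability coefficient 3} for $\gamma_{pq,\mathscr{P}}$ shows that the drift of $|Y|^{pq}$ is bounded above by $\gamma_{pq,\mathscr{P}}|Y|^{pq}$ plus a measure-dependent correction whose expectation is controlled, via~\eqref{eq:domination condition} at exponent $pq$, by $\Theta(\cdot,P_{X},P_{\tilde{X}})$. Since~\eqref{con:8} is the Lipschitz specialisation of~\eqref{con:4} (with all indices satisfying $\alpha_{k}+\beta_{k}=1$), the correction term $\delta_{pq,\mathscr{P}}$ in~\eqref{eq:drift stability coefficient} vanishes, and Proposition~\ref{pr:specific stability moment estimate} applied at exponent $pq$ yields the exponential moment bound
\[
E\bigl[|Y_{t}|^{pq}\bigr]\le\exp\!\Bigl(\int_{t_{0}}^{t}\gamma_{pq,\mathscr{P}}(s)\,ds\Bigr)E\bigl[|Y_{t_{0}}|^{pq}\bigr]
\]
for all $t\ge t_{0}$, subject to $E[|Y_{t_{0}}|^{pq}]<\infty$ and local integrability of $\Theta(\cdot,P_{X},P_{\tilde{X}})$.

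Second, I would pass from this moment bound to the pathwise limsup. The uniform window growth built into~\eqref{con:8}, combined with $\sup_{n}(t_{n+1}-t_{n})<\hat{\delta}$ from~\eqref{con:9}, supplies via the Burkholder--Davis--Gundy inequality a constant $C>0$ independent of $n$ with $E[\sup_{s\in[t_{n},t_{n+1}]}|Y_{s}|^{pq}]\le C\,E[|Y_{t_{n}}|^{pq}]$. Coupled with the moment bound above, the summability condition $\sum_{n}\exp(\varepsilon(pq)^{-1}\int_{t_{1}}^{t_{n}}\gamma_{pq,\mathscr{P}}(s)\,ds)<\infty$ for every $\varepsilon\in(0,\hat{\varepsilon})$ from~\eqref{con:9}, together with Markov's inequality and the Borel--Cantelli lemma, gives an a.s.~pathwise bound on the grid $\{t_{n}\}$; the monotonicity of $\varphi$ then promotes this to the claimed limsup bound on all of $[t_{0},\infty)$. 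This is precisely the package that Theorem~\ref{th:pathwise stability} abstracts, and in practice I would only need to check its hypotheses and quote its conclusion rather than redo the Borel--Cantelli step from scratch.

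The main technical obstacle is the alternative case $\eta_{2}=\hat{\eta}_{2}=0$, where $E[|Y_{t_{0}}|^{pq}]$ is not presumed finite. Here the comparison inequalities in~\eqref{con:8} become independent of $\mu$ and $\tilde{\mu}$, so the It\^o drift of $|Y|^{pq}$ is a purely pathwise expression in $Y$, decoupled from the laws $P_{X},P_{\tilde{X}}$. Introducing the stopping times $\tau_{k}:=\inf\{t\ge t_{0}:|Y_{t}|>k\}$, the stopped processes $Y^{\cdot\wedge\tau_{k}}$ are bounded and satisfy the same pathwise inequalities as $Y$; the argument of the previous two paragraphs then applies to them with initial $pq$-moment dominated by $k^{pq}$, and path continuity of $Y$ gives $\tau_{k}\uparrow\infty$ almost surely, transferring the limsup bound from $Y^{\cdot\wedge\tau_{k}}$ back to $Y$ on a set of full measure. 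The legitimacy of this localisation rests squarely on the decoupling from the laws, which would otherwise be disrupted by stopping individual trajectories.
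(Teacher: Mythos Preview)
Your overall strategy---recognise $Y=X-\tilde X$ as a random It\^o process satisfying the abstract conditions of Section~\ref{se:4} and then invoke Theorem~\ref{th:pathwise stability}---is exactly the paper's proof, which is a one-line reduction via the estimates~\eqref{eq:stability estimates}. For the case $E[|Y_{t_0}|^{pq}]<\infty$ your sketch is fine.

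The gap is in your treatment of the alternative $\eta_2=\hat\eta_2=0$. Your stopping-time localisation does not work, for two reasons. First, $Y^{\tau_k}_{t_0}=Y_{t_0}$ regardless of $k$ (since $\tau_k\ge t_0$), so $E[|Y^{\tau_k}_{t_0}|^{pq}]=E[|Y_{t_0}|^{pq}]$ is \emph{not} bounded by $k^{pq}$; the initial moment is exactly the quantity you are trying to avoid assuming finite. Second, even granting a limsup bound for each stopped process $Y^{\tau_k}$, the conclusion does not transfer to $Y$: for $t>\tau_k(\omega)$ the stopped process is frozen at $Y_{\tau_k(\omega)}$, so $\limsup_{t\uparrow\infty}\varphi(t)^{-1}\log|Y^{\tau_k}_t|\le 0$ is trivially true and says nothing about $Y$ beyond $\tau_k$. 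The fact that $\tau_k\uparrow\infty$ a.s.\ only means that for each fixed $T$ some $\tau_k$ exceeds $T$; it does \emph{not} give a single $k$ with $\tau_k=\infty$, which is what your transfer step needs.

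The paper's Theorem~\ref{th:pathwise stability} handles this case differently: it localises via the $\mathscr{F}_{t_0}$-measurable sets $A_k:=\{|Y_{t_0}|\le k\}$ and works with the process $Y\mathbbm{1}_{A_k}$, which is again a random It\^o process (drift $\hat\B\mathbbm{1}_{A_k}$, diffusion $\hat\Sigma\mathbbm{1}_{A_k}$) and---crucially, because $\beta=\hat\beta=0$ so no law-dependent terms appear---still satisfies~\eqref{con:assumption 3} and~\eqref{con:assumption 4}. Its initial $pq$-moment is at most $k^{pq}$, the full argument applies, and on $A_k$ the processes $Y$ and $Y\mathbbm{1}_{A_k}$ coincide, so the limsup bound holds a.s.\ on $A_k$. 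Since $\bigcup_k A_k=\Omega$, this gives the claim. Replace your stopping times with this $\mathscr{F}_{t_0}$-measurable truncation and your argument goes through.
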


Since the following condition, which involves the same sum of power functions as in~\eqref{con:6}, implies~\eqref{con:9}, we obtain pathwise exponential stability.
\begin{enumerate}[label=(C.\arabic*), ref=C.\arabic*, leftmargin=\widthof{(C.10)} + \labelsep]
\setcounter{enumi}{9}
\item\label{con:10} Condition~\eqref{con:8} is satisfied and there exist $l\in\N$, $\alpha\in (0,\infty)^{l}$, $\hat{\lambda},s\in\R^{l}$ and $t_{1}\in [t_{0},\infty)$ with $\alpha_{1} < \cdots < \alpha_{l}$ and $\hat{\lambda}_{l} < 0 $ such that
\begin{equation*}
\max_{k\in\{1,\dots,l\}} s_{k} \leq t_{1}\quad\text{and}\quad \gamma_{pq,\mathscr{P}}(s) \leq \sum_{k=1}^{l}\hat{\lambda}_{k}\alpha_{k}(s-s_{k})^{\alpha_{k}-1}\quad\text{for a.e.~$s\in [t_{1},\infty)$}
\end{equation*}
\end{enumerate}

\begin{Corollary}\label{co:pathwise stability}
Under~\eqref{con:10}, the following two statements hold:
\begin{enumerate}[(i)]
\item The McKean-Vlasov SDE~\eqref{eq:McKean-Vlasov} is pathwise $\alpha_{l}$-exponentially with Lyapunov exponent $\hat{\lambda}_{l}/(pq)$ relative to an initial $pq$-th moment and $\Theta$.
\item If $\B$ and $\Sigma$ are actually independent of $\mu\in\mathscr{P}$, then the SDE~\eqref{eq:McKean-Vlasov} is pathwise $\alpha_{l}$-exponentially with Lyapunov exponent $\hat{\lambda}_{l}/(pq)$, without any restrictions.
\end{enumerate}
\end{Corollary}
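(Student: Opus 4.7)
The plan is to derive both assertions from Proposition~\ref{pr:pathwise stability} by verifying~\eqref{con:9} from~\eqref{con:10} and then identifying the Lyapunov exponent via a well-chosen normalization $\varphi$. First I would, if necessary, enlarge $t_1$ from~\eqref{con:10} so that $\gamma_{pq,\mathscr{P}}\le 0$ a.e.~on $[t_1,\infty)$: the bound in~\eqref{con:10} is asymptotically dominated by its $l$-th summand $\hat{\lambda}_l\alpha_l(s-s_l)^{\alpha_l-1}$, whose sign is negative since $\hat{\lambda}_l<0$, so this is possible. Then I pick $\delta\in(0,\hat{\delta})$ and set $t_n:=t_1+(n-1)\delta$, which ensures $\sup_n(t_{n+1}-t_n)=\delta<\hat{\delta}$ and $t_n\to\infty$.

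Integrating the pointwise bound of~\eqref{con:10} yields
\begin{equation*}
\int_{t_1}^{t_n}\gamma_{pq,\mathscr{P}}(s)\,ds \le \sum_{k=1}^{l}\hat{\lambda}_k\bigl[(t_n-s_k)^{\alpha_k}-(t_1-s_k)^{\alpha_k}\bigr],
\end{equation*}
and since $\alpha_k<\alpha_l$ for $k<l$ and $\hat{\lambda}_l<0$, the right-hand side diverges to $-\infty$ at the polynomial rate $\hat{\lambda}_l\delta^{\alpha_l}n^{\alpha_l}$. As $\sum_{n}\exp(-cn^{\alpha_l})$ converges for every $c>0$ and every $\alpha_l>0$, the summability requirement of~\eqref{con:9} holds with any $\hat{\varepsilon}\in(0,1)$.

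For the Lyapunov exponent in~(i), I take any two solutions $X,\tilde X$ with $E[|Y_{t_0}|^{pq}]<\infty$ and $\Theta(\cdot,P_X,P_{\tilde X})$ locally integrable, and apply Proposition~\ref{pr:pathwise stability} with $\varphi(t):=(t-s_l)^{\alpha_l}$. The hypothesis on $\varphi$ is met because $s_l\le t_1$, and the conclusion reads
\begin{equation*}
\limsup_{t\uparrow\infty}\frac{\log|Y_t|}{(t-s_l)^{\alpha_l}} \le \frac{1}{pq}\limsup_{n\uparrow\infty}\frac{1}{(t_n-s_l)^{\alpha_l}}\sum_{k=1}^{l}\hat{\lambda}_k\bigl[(t_n-s_k)^{\alpha_k}-(t_1-s_k)^{\alpha_k}\bigr]=\frac{\hat{\lambda}_l}{pq}
\end{equation*}
almost surely, where in the last step the ratios $(t_n-s_k)^{\alpha_k}/(t_n-s_l)^{\alpha_l}$ for $k<l$ and the fixed terms $(t_1-s_k)^{\alpha_k}/(t_n-s_l)^{\alpha_l}$ all vanish as $n\to\infty$. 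Replacing $\varphi$ by $t^{\alpha_l}$ yields the same bound since $(t-s_l)^{\alpha_l}/t^{\alpha_l}\to 1$, producing pathwise $\alpha_l$-exponential stability with Lyapunov exponent $\hat{\lambda}_l/(pq)$. For assertion~(ii), $\mu$-independence of $\B$ and $\Sigma$ forces $\eta_2=\hat{\eta}_2=0$ and $\Theta\equiv 0$, so the alternative hypothesis $\eta_2=\hat{\eta}_2=0$ in Proposition~\ref{pr:pathwise stability} applies and the same computation yields the exponent for arbitrary initial data.

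The main obstacle I expect is the bookkeeping in the second step: it must be verified that the intermediate coefficients $\hat{\lambda}_k$ for $k<l$, some of which may be positive, still give a negligible contribution to the asymptotics of $\int_{t_1}^{t_n}\gamma_{pq,\mathscr{P}}(s)\,ds$. This follows from the strict ordering $\alpha_k<\alpha_l$, which makes those summands uniformly subdominant, so that the single sharp exponent $\hat{\lambda}_l/(pq)$ survives the limit.
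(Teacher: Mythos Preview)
Your proposal is correct and follows essentially the same route as the paper: verify~\eqref{con:9} from~\eqref{con:10} by enlarging $t_1$ so that $\gamma_{pq,\mathscr{P}}\le 0$ a.e., choosing an equidistant sequence $t_n$ with gap smaller than $\hat{\delta}$, establishing the summability condition via the polynomial decay of the integrated bound, and then applying Proposition~\ref{pr:pathwise stability} to extract the exponent $\hat{\lambda}_l/(pq)$. The only cosmetic differences are that the paper invokes an integral test (referencing a lemma in the companion paper) rather than your direct comparison with $\sum_n\exp(-cn^{\alpha_l})$, and applies $\varphi(t)=t^{\alpha_l}$ immediately instead of first normalizing by $(t-s_l)^{\alpha_l}$; neither changes the substance.
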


As possible application let us consider Example~\ref{ex:stochastic integral maps} that involves integral maps.

\begin{Example}
Assume that $\B$ and $\Sigma$ admit the representations~\eqref{eq:stochastic integral maps 1} and~\eqref{eq:stochastic integral maps 2}. In this case,~\eqref{con:8} follows for $\vartheta=\vartheta_{1}$ from condition~(1) in Example~\ref{ex:stochastic integral maps} and the following two requirements:
\begin{enumerate}[(1)]
\setcounter{enumi}{1}
\item The two partial and complete H\oe lder continuity estimates~\eqref{eq:main example estimate 1} and~\eqref{eq:main example estimate 2} hold for each $(n,i)\in\{1,\dots,N\}\times\{1,\dots,m\}$ when $\hat{\alpha} = \beta = \hat{\beta} = 1$.
\item There are $\tilde{\eta}_{1}\in\mathscr{L}_{loc}^{1}(\R)$ and $\R_{+}$-valued measurable locally bounded functions $\tilde{\eta}_{2}$, $\overline{\eta}_{1}$ and $\overline{\eta}_{2}$ on $[t_{0},\infty)$ such that
\begin{equation*}
\sum_{n=1}^{N}\null_{n}\eta'\lambda_{n}\leq \tilde{\eta}_{1},\quad \sum_{n=1}^{N}\null_{n}\eta'\upsilon_{n} \leq \tilde{\eta}_{2},\quad \sum_{n=1}^{N}|\null_{n}\hat{\eta}|\hat{\lambda}_{n}\leq \overline{\eta}_{1}\quad\text{and}\quad\sum_{n=1}^{N}|\null_{n}\hat{\eta}|\hat{\upsilon}_{n}\leq\overline{\eta}_{2}
\end{equation*}
a.e.~on $[t_{0},\infty)$ a.s. Further, $\tilde{\eta}_{1}\leq 0$ in the case that $\alpha < 1$.
\end{enumerate}
Based on these coefficients, our considerations in Example~\ref{ex:stochastic integral maps} show that for the stability coefficient $\gamma_{pq,\mathscr{P}}$ in~\eqref{eq:specific stability coefficient 3} and the functional $\Theta$ we have
\begin{align*}
\frac{\gamma_{pq,\mathscr{P}}}{pq} &= \zeta_{\eta} + \tilde{\eta}_{1}\mathbbm{1}_{\{1\}}(\alpha) + c_{pq,\mathscr{P}}\tilde{\eta}_{2} + c_{pq}\bigg(\big[|\hat{\eta}|\big]_{\infty}^{2} + \overline{\eta}_{1}^{2} + c_{pq,\mathscr{P}}^{2}\overline{\eta}_{2}^{2}\bigg)\\
&\quad + 2c_{pq}\bigg(\big[|\hat{\eta}|\big]_{\infty}\overline{\eta}_{1} + c_{pq,\mathscr{P}}\big[|\overline{\eta}|\big]_{\infty}\overline{\eta}_{2} + c_{pq,\mathscr{P}}\overline{\eta}_{1}\overline{\eta}_{2}\bigg)
\end{align*}
and $\Theta(\cdot,\mu,\tilde{\mu}) = \tilde{\eta}_{2}\vartheta(\mu,\tilde{\mu}) + \overline{\eta}_{2}^{2}\vartheta(\mu,\tilde{\mu})^{2}$ for all $\mu,\tilde{\mu}\in\mathscr{P}$. Thus, if there are $\tilde{\lambda} < 0$ and $\tilde{\alpha} > 0$ such that 
\begin{equation*}
\gamma_{pq,\mathscr{P}} \leq \tilde{\lambda}\tilde{\alpha}(s-t_{0})^{\tilde{\alpha} - 1}\quad\text{for a.e.~$s\in [t_{0},\infty)$},
\end{equation*}
then Corollary~\ref{co:pathwise stability} yields the following two assertions:
\begin{enumerate}[(1)]
\setcounter{enumi}{3}
\item Equation~\eqref{eq:McKean-Vlasov} pathwise $\tilde{\alpha}$-exponentially stable with Lyapunov exponent $\tilde{\lambda}/(pq)$ with respect to an initial $pq$-th moment and $\Theta$.
\item In the case that $\tilde{\eta}_{2}=\overline{\eta}_{2} = 0$ when $\B$ and $\Sigma$ are independent of $\mu\in\mathscr{P}$ the SDE~\eqref{eq:McKean-Vlasov} is pathwise $\tilde{\alpha}$-exponentially stable with Lyapunov exponent $\tilde{\lambda}/(pq)$.
\end{enumerate}
\end{Example}

Now we deduce a second and a $p$-th moment estimate for solutions to~\eqref{eq:McKean-Vlasov}. As the first bound implies that their second moment functions are locally bounded, local integrability in terms of the functional $\Theta$ in Corollary~\ref{co:pathwise uniqueness} is always satisfied.

Similarly, the second bound ensures that the $p$-th moment function of any solution is locally bounded and thus, all local integrability requirements with respect to $\Theta$ in Corollaries~\ref{co:moment stability},~\ref{co:exponential moment stability} and~\ref{co:pathwise stability} hold automatically.

Let us give two \emph{growth conditions} on $(\B,\Sigma)$ that are only \emph{partially restrictive} for the drift $\B$. The first is required for the second moment estimate and includes different classes of growth behaviour, by using the cone $\mathrm{R}_{c}$, introduced in Section~\ref{se:2.2}. The second yields the $p$-th moment estimate and is of affine nature:
\begin{enumerate}[label=(C.\arabic*), ref=C.\arabic*, leftmargin=\widthof{(C.11)} + \labelsep]
\setcounter{enumi}{10}
\item\label{con:11} There are $\alpha\in (0,1]$, $\kappa,\chi\in\mathscr{S}_{loc}^{1}(\R_{+})$, $\upsilon\in\mathscr{S}_{\mathrm{loc}}^{1/(1-\alpha)}(\R_{+})$ and $\phi,\varphi\in\mathrm{R}_{c}$ such that $\phi^{1/\alpha}$ is concave, $\varphi$ is increasing and
\begin{equation*}
2x'\B(x,\mu) + |\Sigma(x,\mu)|^{2} \leq \kappa + \upsilon\phi(|x|^{2}) + \chi\varphi(\vartheta(\mu,\delta_{0})^{2})
\end{equation*}
for all $(x,\mu)\in\R^{m}\times\mathscr{P}$ a.e.~on $[t_{0},\infty)$ a.s.
\item\label{con:12} There are $l\in\N$, $\alpha,\beta\in [0,1]^{l}$ with $\alpha + \beta\in [0,1]^{l}$, two $\R_{+}^{l}$-valued measurable maps $\kappa,\hat{\kappa}$ on $[t_{0},\infty)$ and for each $k\in\{1,\dots,l\}$ there are
\begin{equation*}
\null_{k}\upsilon\in\mathscr{S}_{loc}^{\frac{p}{1-\alpha_{k}}}(\R)\quad\text{and}\quad\null_{k}\hat{\upsilon}\in\mathscr{S}_{2,\mathrm{loc}}^{\frac{p}{1-\alpha_{k}}}(\R_{+})
\end{equation*}
such that $x'\B(x,\mu)\leq \sum_{k=1}^{l}\kappa_{k}\,\null_{k}\upsilon|x|^{1 + \alpha_{k}}\vartheta(\mu,\delta_{0})^{\beta_{k}}$ and
\begin{equation*}
|\Sigma(x,\mu)| \leq \sum_{k=1}^{l}\hat{\kappa}_{k}\,\null_{k}\hat{\upsilon}|x|^{\alpha_{k}}\vartheta(\mu,\delta_{0})^{\beta_{k}}
\end{equation*}
for any $(x,\mu)\in\R^{m}\times\mathscr{P}$ a.e.~on $[t_{0},\infty)$ a.s. Moreover, for any $k\in\{1,\dots,l\}$ it holds that $\kappa_{k}=\hat{\kappa}_{k} = 1$, if $\alpha_{k} + \beta_{k} = 1$, and the functions
\begin{equation*}
\kappa_{k}^{\frac{p}{1-\alpha_{k} - \beta_{k}}}[\null_{k}\upsilon]_{\frac{p}{1-\alpha_{k}}}\quad\text{and}\quad \hat{\kappa}_{k}^{\frac{2p}{1-\alpha_{k} - \beta_{k}}}[\null_{k}\hat{\upsilon}]_{\frac{p}{1-\alpha_{k}}}^{2}
\end{equation*}
are locally integrable.
\end{enumerate}

\begin{Remark}
As in~\eqref{con:4} we could have $\kappa_{k}=\hat{\kappa}_{k}=1$ for all $k\in\{1,\dots,l\}$, in which case the additional local integrability requirement is redundant. By admitting more general choices of $\kappa$ and $\hat{\kappa}$, the growth estimate~\eqref{eq:growth estimate} in Theorem~\ref{th:strong existence} can be derived.
\end{Remark}

Provided~\eqref{con:11} is valid and $\beta\in (0,1]$, we define $f,g\in\mathscr{L}_{loc}^{1}(\R_{+})$ by
\begin{equation*}
f:=\alpha\big[\upsilon\big]_{\frac{1}{1-\alpha}} + \beta E\big[\chi\big]\quad\text{and}\quad g:=(1-\alpha)\big[\upsilon\big]_{\frac{1}{1-\alpha}} + (1-\beta)E\big[\chi\big]
\end{equation*}
and apply Proposition~\ref{pr:abstract second moment estimate} to state the following \emph{quantitative $L^{2}$-estimate}, which becomes explicit in the setting of Example~\ref{ex:modulus of continuity}.

\begin{Lemma}\label{le:2nd moment growth estimate}
Let~\eqref{eq:domination condition} hold for $p=2$,~\eqref{con:11} be valid, $X$ be a solution to~\eqref{eq:McKean-Vlasov} such that $E[|X_{t_{0}}|^{2}] < \infty$ and $E[\chi]\varphi(\vartheta(P_{X},\delta_{0})^{2})$ is locally integrable and $\varphi_{0}\in C(\R_{+})$ given by
\begin{equation*}
\varphi_{0}(v):=\phi(v)^{\frac{1}{\alpha}}\vee\varphi(c_{2,\mathscr{P}}^{2}v)^{\frac{1}{\beta}}
\end{equation*}
satisfy $\Phi_{\varphi_{0}}(\infty)=\infty$. Then $E[|X|^{2}]$ is locally bounded and
\begin{equation*}
\sup_{s\in [t_{0},t]} E\big[|X_{s}|^{2}\big] \leq \Psi_{\varphi_{0}}\bigg(E\big[|X_{t_{0}}|^{2}\big] + \int_{t_{0}}^{t}E\big[\kappa_{s}\big] + g(s)\,ds,\int_{t_{0}}^{t}f(s)\,ds\bigg)
\end{equation*}
for any $t\in [t_{0},\infty)$. In particular, if $E[\kappa]$, $f$ and $g$ are integrable, then $E[|X|^{2}]$ is bounded.
\end{Lemma}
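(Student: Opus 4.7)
The plan is to reduce the estimate to the abstract moment bound for random It\^o processes, Proposition~\ref{pr:abstract second moment estimate} (to be established in Section~\ref{se:4}), by extracting from~\eqref{con:11} the pathwise data it needs. Viewing $X$ as a random It\^o process with drift $\B_\cdot(X,P_X)$ and diffusion $\Sigma_\cdot(X,P_X)$, I would first apply It\^o's formula to $|X|^2$, localise at $\tau_n:=\inf\{t\geq t_0:|X_t|\geq n\}$ so that the martingale term vanishes in expectation, and plug in the pathwise estimate from~\eqref{con:11} to get
\begin{equation*}
E\big[|X_{t\wedge\tau_n}|^2\big] \leq E\big[|X_{t_0}|^2\big] + \int_{t_0}^{t}E\big[\kappa_s + \upsilon_s\phi(|X_{s\wedge\tau_n}|^2)\big] + E[\chi_s]\varphi\big(\vartheta(P_{X_{s\wedge\tau_n}},\delta_0)^2\big)\,ds.
\end{equation*}

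Setting $u_n(s):=E[|X_{s\wedge\tau_n}|^2]$, the two law-dependent terms must be expressed in terms of $u_n(s)$. The $L^2$-domination~\eqref{eq:domination condition} gives $\vartheta(P_{X_{s\wedge\tau_n}},\delta_0)^2\leq c_{2,\mathscr{P}}^2 u_n(s)$, whence monotonicity of $\varphi$ yields $\varphi(\vartheta(P_{X_{s\wedge\tau_n}},\delta_0)^2)\leq\varphi(c_{2,\mathscr{P}}^2 u_n(s))$. For the $\upsilon$-term, H\oe lder's inequality with exponents $1/(1-\alpha)$ and $1/\alpha$, followed by Jensen's inequality applied to the concave function $\phi^{1/\alpha}$, gives
\begin{equation*}
E\big[\upsilon_s\phi(|X_{s\wedge\tau_n}|^2)\big] \leq \big[\upsilon_s\big]_{1/(1-\alpha)}\phi^{1/\alpha}(u_n(s))^\alpha = \big[\upsilon_s\big]_{1/(1-\alpha)}\phi(u_n(s)).
\end{equation*}
The Young-type split $a^\theta\leq\theta a + (1-\theta)$ applied at $\theta=\alpha$ to $\phi^{1/\alpha}(u_n)^\alpha$ and at $\theta=\beta$ to $\varphi(c_{2,\mathscr{P}}^2 u_n)=(\varphi(c_{2,\mathscr{P}}^2 u_n)^{1/\beta})^\beta$, together with the facts $\varphi_0\geq\phi^{1/\alpha}$ and $\varphi_0\geq\varphi(c_{2,\mathscr{P}}^2\cdot)^{1/\beta}$, transforms these contributions precisely into $f(s)\varphi_0(u_n(s)) + g(s)$, with $f$ and $g$ as in the statement.

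Bihari's lemma can then be invoked on the resulting inequality
\begin{equation*}
u_n(t) \leq E\big[|X_{t_0}|^2\big] + \int_{t_0}^{t}\big(E[\kappa_s] + g(s)\big)\,ds + \int_{t_0}^{t}f(s)\varphi_0(u_n(s))\,ds,
\end{equation*}
where the hypothesis $\Phi_{\varphi_0}(\infty)=\infty$ forces $D_{\varphi_0}=\R_+^2$ and makes $\Psi_{\varphi_0}$ finite everywhere, yielding a bound by $\Psi_{\varphi_0}$ of the claimed two arguments that is uniform in $n$. Monotonicity of the right-hand side in $t$ delivers the supremum bound, and Fatou's lemma together with continuity of $X$ passes $n\uparrow\infty$ to recover $u(s)=E[|X_s|^2]$ in place of $u_n(s)$.

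The chief obstacle is the feedback loop between the law $P_{X_s}$, which enters the coefficients through $\vartheta(P_{X_s},\delta_0)$, and the very quantity $u(s)$ one is trying to control: a priori one only knows that $E[\chi]\varphi(\vartheta(P_X,\delta_0)^2)$ is locally integrable (by hypothesis), not that $u$ is locally bounded. The localisation by $\tau_n$—under which $u_n$ is automatically finite by construction—breaks this circularity, and monotonicity of $\Psi_{\varphi_0}$ in each coordinate keeps the bounds uniform in $n$ so that Fatou closes the argument and, a posteriori, furnishes the local boundedness of $u$ itself.
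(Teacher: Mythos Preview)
Your high-level plan—view $X$ as an It\^o process and invoke Proposition~\ref{pr:abstract second moment estimate}—is exactly what the paper does; its proof is a two-line reduction recording that $X$ has drift $\hat{\B}_s=\B_s(X_s,P_{X_s})$, diffusion $\hat{\Sigma}_s=\Sigma_s(X_s,P_{X_s})$, and satisfies~\eqref{con:assumption 1} with $\theta=\vartheta(P_X,\delta_0)$. Your unwinding of that proposition, however, contains a genuine gap in the first displayed inequality: after localising at $\tau_n$, the law entering the coefficients remains $P_{X_s}$, \emph{not} $P_{X_{s\wedge\tau_n}}$. Stopping the It\^o process only multiplies the drift and diffusion by $\mathbbm{1}_{\{\tau_n>s\}}$; it does not replace the measure argument by the law of the stopped process. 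Hence the term that actually appears is $\chi_s\varphi(\vartheta(P_{X_s},\delta_0)^2)$, and the domination~\eqref{eq:domination condition} only bounds $\vartheta(P_{X_s},\delta_0)^2$ by $c_{2,\mathscr{P}}^2E[|X_s|^2]=c_{2,\mathscr{P}}^2u(s)$—the a priori unknown quantity—not by $c_{2,\mathscr{P}}^2u_n(s)$. So localisation does \emph{not} break the feedback loop you correctly identify.

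The paper's resolution (inside the proof of Proposition~\ref{pr:abstract second moment estimate}) is a two-pass argument. On the first pass one keeps $E[\chi]\varphi(\vartheta(P_X,\delta_0)^2)$ as a fixed locally integrable function—this is exactly the hypothesis of the lemma—and applies Bihari using only the $\phi$-contribution; since $\varphi_0\geq\phi^{1/\alpha}$, the assumption $\Phi_{\varphi_0}(\infty)=\infty$ forces $\Phi_{\phi^{1/\alpha}}(\infty)=\infty$, so Bihari combined with Fatou yields that $E[|X|^2]$ is locally bounded. Only then, on the second pass with $E[|X_s|^2]$ known to be finite, does one invoke $\vartheta(P_{X_s},\delta_0)^2\leq c_{2,\mathscr{P}}^2E[|X_s|^2]$, perform the Young split, and apply Bihari once more with the combined comparison function $\varphi_0$ to obtain the asserted estimate.
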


We recall the constant $c_{p} = (p-1)/2$ and let the familiar estimate~\eqref{eq:domination condition} be valid. If the growth condition~\eqref{con:12} is satisfied, then we may let $f_{p,\mathscr{P}}\in\mathscr{L}_{loc}^{1}(\R)$ and $g_{p,\mathscr{P}}\in\mathscr{L}_{loc}^{1}(\R_{+})$ be given by
\begin{equation}\label{eq:growth coefficient}
\begin{split}
f_{p,\mathscr{P}} &:= \sum_{k=1}^{l}(p-1+\alpha_{k} + \beta_{k})c_{p,\mathscr{P}}^{\beta_{k}}\big[\null_{k}\upsilon]_{\frac{p}{1-\alpha_{k}}}\\
&\quad + c_{p}\sum_{j,k=1}^{l}(p-2 + \alpha_{j} + \alpha_{k} + \beta_{j} + \beta_{k})c_{p,\mathscr{P}}^{\beta_{j} + \beta_{k}}\big[\null_{j}\hat{\upsilon}\,\null_{k}\hat{\upsilon}\big]_{\frac{p}{2-\alpha_{j} - \alpha_{k}}}
\end{split}
\end{equation}
and
\begin{equation}\label{eq:drift growth coefficient}
\begin{split}
&g_{p,\mathscr{P}} := \sum_{k=1}^{l}(1-\alpha_{k} - \beta_{k})c_{p,\mathscr{P}}^{\beta_{k}}\kappa_{k}^{\frac{p}{1-\alpha_{k}-\beta_{k}}}\big[\null_{k}\upsilon\big]_{\frac{p}{1-\alpha_{k}}}\\
&\quad + c_{p}\sum_{j,k=1}^{l}(2-\alpha_{j}-\alpha_{k}-\beta_{j}-\beta_{k})c_{p,\mathscr{P}}^{\beta_{j} + \beta_{k}}(\hat{\kappa}_{j}\hat{\kappa}_{k})^{\frac{p}{2-\alpha_{j}-\alpha_{k}-\beta_{j}-\beta_{k}}}\big[\null_{j}\hat{\upsilon}\,\null_{k}\hat{\upsilon}\big]_{\frac{p}{2-\alpha_{j}-\alpha_{k}}}.
\end{split}
\end{equation}
Further, we define an $\R_{+}$-valued Borel measurable functional $\Theta$ on $[t_{0},\infty)\times\mathscr{P}^{2}$ by
\begin{equation*}
\Theta(\cdot,\mu,\tilde{\mu}):=\sum_{\substack{k=1,\\ \beta_{k} > 0}}^{l}\kappa_{k}\big[\null_{k}\upsilon\big]_{\frac{p}{1-\alpha_{k}}}\vartheta(\mu,\tilde{\mu})^{\beta_{k}} 
+ \hat{\kappa}_{k}^{2}\big[\null_{k}\hat{\upsilon}\big]_{\frac{p}{1 - \alpha_{k}}}^{2}\vartheta(\mu,\tilde{\mu})^{2\beta_{k}}.
\end{equation*}
These formulas are in essence the same as those for the stability coefficients in~\eqref{eq:specific stability coefficient} and~\eqref{eq:drift stability coefficient} and the functional in~\eqref{eq:integrability functional}, as the subsequent \emph{explicit $L^{p}$-growth estimate} and the $L^{p}$-comparison estimate in Proposition~\ref{pr:specific stability moment estimate} are implied by Theorem~\ref{th:stability moment estimate}.

\begin{Lemma}\label{le:p-th moment growth estimate}
Let~\eqref{con:12} hold and $X$ be a solution to~\eqref{eq:McKean-Vlasov} such that $E[|X_{t_{0}}|^{p}]$ $< \infty$ and $\Theta(\cdot,P_{X},\delta_{0})$ is locally integrable. Then
\begin{equation}\label{eq:p-th moment growth estimate}
E\big[|X_{t}|^{p}\big] \leq e^{\int_{t_{0}}^{t}f_{p,\mathscr{P}}(s)\,ds}E\big[|X_{t_{0}}|^{p}\big] + \int_{t_{0}}^{t}e^{\int_{s}^{t}f_{p,\mathscr{P}}(\tilde{s})\,d\tilde{s}}g_{p,\mathscr{P}}(s)\,ds
\end{equation}
for all $t\in [t_{0},\infty)$. In particular, if $f_{p,\mathscr{P}}^{+}$ and $g_{p,\mathscr{P}}$ are integrable, then $E[|X|^{p}]$ is bounded and from $\int_{t_{0}}^{\infty}f_{p,\mathscr{P}}^{-}(s)\,ds = \infty$ it follows that $\lim_{t\uparrow\infty} E[|X_{t}|^{p}] = 0$.
\end{Lemma}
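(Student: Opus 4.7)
The plan is to reduce to the $L^p$-comparison framework of the previous section. Condition~\eqref{con:12} is precisely the analogue of~\eqref{con:4} for the comparison $\tilde{\B}\equiv\tilde{\Sigma}\equiv 0$, $\tilde{X}\equiv 0$, and $\vartheta(\mu,\delta_0)$ in place of $\vartheta(\mu,\tilde{\mu})$; the coefficients $f_{p,\mathscr{P}},g_{p,\mathscr{P}}$ in~\eqref{eq:growth coefficient}--\eqref{eq:drift growth coefficient} likewise mirror $\gamma_{p,\mathscr{P}},\delta_{p,\mathscr{P}}$ in~\eqref{eq:specific stability coefficient}--\eqref{eq:drift stability coefficient}. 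The cleanest route is therefore to invoke Theorem~\ref{th:stability moment estimate}, the general random-It\^o-process moment estimate from Section~\ref{se:4}, applied to the It\^o process $X$ with drift $s\mapsto\B_s(X_s,P_{X_s})$ and diffusion $s\mapsto\Sigma_s(X_s,P_{X_s})$. Below I sketch the equivalent direct route, since it makes explicit where each piece of~\eqref{con:12} enters.

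I would localise via $\tau_n:=\inf\{s\geq t_0:|X_s|\geq n\}$ and apply It\^o's formula to $|X_{s\wedge\tau_n}|^p$; the localisation is needed because only $P_{X_s}\in\mathscr{P}$ is a priori known, which does not by itself give local boundedness of $s\mapsto E[|X_s|^p]$. For $p\geq 2$ the Hessian estimate $\tfrac12\mathrm{tr}(\Sigma\Sigma'\nabla^2|\cdot|^p)\leq pc_p|\cdot|^{p-2}|\Sigma|^2$ with $c_p=(p-1)/2$, together with the fact that the localised It\^o integral is an $L^2$-martingale, yields after taking expectations and substituting~\eqref{con:12}:
\begin{align*}
E\big[|X_{t\wedge\tau_n}|^p\big] - E\big[|X_{t_0}|^p\big] &\leq \int_{t_0}^t \sum_{k=1}^l p\kappa_k(s)\,E\big[(\null_{k}\upsilon_s)^+|X_s|^{p-1+\alpha_k}\mathbbm{1}_{\{s\leq\tau_n\}}\big]\vartheta(P_{X_s},\delta_0)^{\beta_k}\\
&\quad + pc_p\!\!\sum_{j,k=1}^l \hat\kappa_j(s)\hat\kappa_k(s)\,E\big[\null_{j}\hat\upsilon_s\,\null_{k}\hat\upsilon_s|X_s|^{p-2+\alpha_j+\alpha_k}\mathbbm{1}_{\{s\leq\tau_n\}}\big]\vartheta(P_{X_s},\delta_0)^{\beta_j+\beta_k}\,ds.
\end{align*}

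The heart of the argument is the sharp Hölder--Young inequality~\eqref{eq:essential inequality}. First I would apply the domination~\eqref{eq:domination condition} to replace each factor $\vartheta(P_{X_s},\delta_0)^\beta$ by $c_{p,\mathscr{P}}^{\beta}E[|X_s|^p]^{\beta/p}$; then on each drift term I would use~\eqref{eq:essential inequality} with $(\gamma,\beta,X,x)=(1-\alpha_k,\beta_k,(\null_{k}\upsilon_s)^+,\kappa_k(s))$ and on each diffusion cross-term with $(\gamma,\beta,X,x)=(2-\alpha_j-\alpha_k,\beta_j+\beta_k,\null_{j}\hat\upsilon_s\,\null_{k}\hat\upsilon_s,\hat\kappa_j(s)\hat\kappa_k(s))$. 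The normalisations $\kappa_k=\hat\kappa_k=1$ when $\alpha_k+\beta_k=1$ that are built into~\eqref{con:12} supply exactly the $x=1$ boundary case required by~\eqref{eq:essential inequality}. Term-by-term reading off the coefficients reproduces precisely the sums in~\eqref{eq:growth coefficient} and~\eqref{eq:drift growth coefficient}, giving
\[
E\big[|X_{t\wedge\tau_n}|^p\big] \leq E\big[|X_{t_0}|^p\big] + \int_{t_0}^t f_{p,\mathscr{P}}(s)\,E\big[|X_s|^p\big] + g_{p,\mathscr{P}}(s)\,ds.
\]
Linear Gronwall and Fatou as $n\uparrow\infty$ produce the explicit bound~\eqref{eq:p-th moment growth estimate}; boundedness of $E[|X|^p]$ under integrability of $f_{p,\mathscr{P}}^+$ and $g_{p,\mathscr{P}}$ is immediate, while $\lim_{t\uparrow\infty}E[|X_t|^p]=0$ under $\int f_{p,\mathscr{P}}^-=\infty$ follows by dominated convergence on the kernel $e^{\int_s^t f_{p,\mathscr{P}}}$, majorised by $e^{\int_{t_0}^\infty f_{p,\mathscr{P}}^+}$ and tending pointwise to $0$.

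The main obstacle will be the combinatorial bookkeeping matching the output of~\eqref{eq:essential inequality} to the exact coefficient structure in~\eqref{eq:growth coefficient}--\eqref{eq:drift growth coefficient}, especially the full double sum over $(j,k)$ governing the diffusion. A secondary technicality is the chicken-and-egg aspect of the Gronwall closure: after applying the domination one obtains an inequality in which the left-hand side is $v_n(t):=E[|X_{t\wedge\tau_n}|^p]$ but the right-hand side involves $E[|X_s|^p]$; the standard fix is to bootstrap via the local integrability of $\Theta(\cdot,P_X,\delta_0)$, which bounds the $\vartheta$-contributions uniformly and lets one first obtain local boundedness of $s\mapsto E[|X_s|^p]$ before iterating the Gronwall loop — or, equivalently, to invoke the general Itô-process version in Theorem~\ref{th:stability moment estimate} where this issue has already been handled.
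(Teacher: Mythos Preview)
Your proposal is correct and matches the paper's approach exactly: the paper's proof consists of two sentences observing that $X$ is a random It\^o process with drift $\hat{\B}_{s}=\B_{s}(X_{s},P_{X_{s}})$ and diffusion $\hat{\Sigma}_{s}=\Sigma_{s}(X_{s},P_{X_{s}})$ satisfying the bounds implied by~\eqref{con:12}, and then invoking Theorem~\ref{th:stability moment estimate}. Your ``direct route'' sketch is essentially a recapitulation of the proof of that theorem, including the two-stage Gronwall closure you flag at the end, so there is no genuine difference in method.
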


At last, we recast Example~\ref{ex:stochastic integral maps} when the integer $N$ is replaced by $lN$ for some $l\in\N$.

\begin{Example}
Let $\kappa\in\mathscr{S}(\R^{m})$, $\eta\in\mathscr{S}(\R^{m\times m})$ and $\hat{\kappa},\hat{\eta}\in\mathscr{S}(\R^{m\times d})$ and $\hat{m}\in\N^{l\times N}$. For every $(k,n)\in\{1,\dots,l\}\times\{1,\dots,N\}$ we assume that
\begin{equation*}
\null_{k,n}\eta\in\mathscr{S}(\R_{+}^{m}),\quad \null_{k,n}\hat{\eta}\in\mathscr{S}(\R^{m\times\hat{m}_{k,n}})
\end{equation*}
and $f_{k,n}$ and $g_{k,n}$ are two Borel measurable maps on $\R^{m}\times\R^{m}$ taking their values in $\R^{m}$ and $\R^{\hat{m}_{k,n}\times d}$, respectively, such that $f_{k,n}(x,\cdot)$ and $g_{k,n}(x,\cdot)$ are $\mu$-integrable and
\begin{equation*}
\B(x,\mu) = \kappa + \eta x + \sum_{k=1}^{l}\sum_{n=1}^{N}\mathrm{diag}(\null_{k,n}\eta)\int_{\R^{m}}f_{k,n}(x,y)\,\mu(dy)
\end{equation*}
and
\begin{equation*}
\Sigma(x,\mu) = \hat{\kappa} + \mathrm{diag}(x)\hat{\eta} + \sum_{k=1}^{l}\sum_{n=1}^{N}\null_{k,n}\hat{\eta}\int_{\R^{m}}g_{k,n}(x,y)\,\mu(dy)
\end{equation*}
for every $(x,\mu)\in\R^{m}\times\mathscr{P}$. In this case,~\eqref{con:12} follows for $\vartheta=\vartheta_{1}$ from condition (1) in Example~\ref{ex:stochastic integral maps} and the following three conditions:
\begin{enumerate}[(1)]
\setcounter{enumi}{1}
\item $E[|\kappa|^{p}]^{1/p}$ and $E[|\hat{\kappa}|^{p}]^{2/p}$ are locally integrable.
\item For given $\alpha,\beta\in [0,1]^{l}$ with $\alpha + \beta\in [0,1]^{l}$ and every $k\in\{1,\dots,l\}$ there are $c_{k,1},\dots,c_{k,N}\in\R^{m}$ satisfying
\begin{equation*}
\mathrm{sgn}(x_{i})f_{k,n,i}(x,y)\leq c_{k,n,i}|x|^{\alpha_{k}}|y|^{\beta_{k}}
\end{equation*}
for any $x,y\in\R^{m}$ and all $(n,i)\in\{1,\dots,N\}\times\{1,\dots,m\}$ so that $\null_{k}\upsilon:=\sum_{n=1}^{N}\null_{k,n}\eta'c_{k,n}$ lies in $\mathscr{S}_{loc}^{\frac{p}{1-\alpha_{k}}}(\R)$.
\item Further, there exist $\hat{c}_{k,1},\dots,\hat{c}_{k,N}\in\R_{+}$ satisfying
\begin{equation*}
|g_{k,n}(x,y)| \leq \hat{c}_{k,n}|x|^{\alpha_{k}}|y|^{\beta_{k}}
\end{equation*}
for all $x,y\in\R^{m}$ and any $n\in\{1,\dots,N\}$ and $\null_{k}\hat{\upsilon}:=\sum_{n=1}^{N}|\null_{k,n}\hat{\eta}|\hat{c}_{k,n}$ belongs to $\mathscr{S}_{2,loc}^{\frac{p}{1-\alpha_{k}}}(\R_{+})$.
\end{enumerate}
Thus, if $\vartheta=\vartheta_{1}$ and these four requirements are met, then Lemma~\ref{le:p-th moment growth estimate} implies the following two assertions:
\begin{enumerate}[(1)]
\setcounter{enumi}{4}
\item For any solution $X$ to~\eqref{eq:McKean-Vlasov} such that $E[|X_{t_{0}}|^{p}] < \infty$ the estimate~\eqref{eq:p-th moment growth estimate} is valid if the local integrability of
\begin{equation*}
\sum_{\substack{k=1,\\ \beta_{k} > 0}}^{l}\big[\null_{k}\upsilon\big]_{\frac{p}{1-\alpha_{k}}}\vartheta(P_{X},\delta_{0})^{\beta_{k}} + \big[\null_{k}\hat{\upsilon}\big]_{\frac{p}{1 - \alpha_{k}}}^{2}\vartheta(P_{X},\delta_{0})^{2\beta_{k}}
\end{equation*}
holds. Thereby, the formulas~\eqref{eq:growth coefficient} and~\eqref{eq:drift growth coefficient} apply when $l$, $\alpha$, $\beta$ and $c_{p,\mathscr{P}}$ are replaced by $l+2$, $(\alpha,0,1)$, $(\beta,0,0)$ and $1$, respectively, with the choice 
\begin{equation*}
\kappa_{k}=\hat{\kappa}_{k} = 1\quad\text{for all $k\in\{1,\dots,l+2\}$},
\end{equation*}
$(\null_{k+1}\nu,\null_{k+1}\hat{\nu}) = (|\kappa|,|\hat{\kappa}|)$ and $(\null_{k+2}\nu,\null_{k+2}\hat{\nu}) = (\zeta_{\eta},|\hat{\eta}|)$.
\item Define $\underline{\upsilon}\in\mathscr{L}_{loc}^{1}(\R)$ by $\underline{\nu}:= \zeta_{\eta} + \sum_{k=1,\,\alpha_{k} = 1}^{l}[\null_{k}\upsilon]_{\infty}$. Then $E[|X|^{p}]$ is bounded provided
\begin{equation*}
\underline{\nu}^{+},\quad [|\hat{\eta}|]_{\infty}^{2},\quad \sum_{\substack{k=1,\\ \alpha_{k} < 1}}^{l}[\null_{k}\nu]_{\frac{p}{1-\alpha_{k}}},\quad \sum_{k=1}^{l}[\null_{k}\hat{\upsilon}]_{\frac{p}{1-\alpha_{k}}}^{2},\quad E\big[|\kappa|^{p}\big]^{\frac{1}{p}}\quad\text{and}\quad E\big[|\hat{\kappa}|^{p}\big]^{\frac{2}{p}}
\end{equation*}
are integrable. If additionally $\int_{t_{0}}^{\infty}\underline{\nu}^{-}(s)\,ds = \infty$, then $\lim_{t\uparrow\infty} E[|X_{t}|^{p}] = 0$.
\end{enumerate}
\end{Example}

\subsection{Strong solutions with locally bounded moment functions}\label{se:3.4}

Now we suppose that $b$ and $\sigma$ are two Borel measurable maps on $[t_{0},\infty)\times\R^{m}\times\mathscr{P}$ with values in $\R^{m}$ and $\R^{m\times d}$, respectively, and $\xi:\Omega\rightarrow\R^{m}$ is $\mathscr{F}_{t_{0}}$-measurable. The aim of this section is to deduce a strong solution $X$ to~\eqref{eq:deterministic McKean-Vlasov} with $X_{t_{0}}=\xi$ a.s.~such that the measurable $p$-th moment function
\begin{equation*}
[t_{0},\infty)\rightarrow [0,\infty],\quad t\mapsto E\big[|X_{t}|^{p}\big]
\end{equation*}
is finite and locally bounded for $p\in [2,\infty)$. Namely, we use the preceding comparison and growth results to construct the law of the solution as local uniform limit of a Picard iteration. In this setting, $b(s,\cdot,\mu)$ is not required to be Lipschitz continuous or of affine growth for any $(s,\mu)\in [t_{0},\infty)\times\mathscr{P}$.

For a Borel measurable map $\mu:[t_{0},\infty)\rightarrow\mathscr{P}$ we define two measurable maps $b_{\mu}$ and $\sigma_{\mu}$ on $[t_{0},\infty)\times\R^{m}$ with values in $\R^{m}$ and $\R^{m\times d}$, respectively, by $b_{\mu}(t,x):=b(t,x,\mu(t))$ and $\sigma_{\mu}(t,x):=\sigma(t,x,\mu(t))$. These two coefficients induce the SDE
\begin{equation}\label{eq:mu-SDE}
dX_{t} = b_{\mu}(t,X_{t})\,dt + \sigma_{\mu}(t,X_{t})\,dW_{t}\quad\text{for $t\in [t_{0},\infty)$}.
\end{equation}
To obtain strong solutions for this equation, we introduce a growth as well as a continuity and boundedness condition and a spatial Osgood condition on compact sets on $(b,\sigma)$:
\begin{enumerate}[label=(D.\arabic*), ref=D.\arabic*, leftmargin=\widthof{(D.3)} + \labelsep]
\item\label{con:det.1} There exist $\kappa,\upsilon,\chi\in\mathscr{L}_{loc}^{1}(\R_{+})$ and $\phi,\varphi\in\mathrm{R}_{c}$ so that $\phi$ is concave, $\int_{1}^{\infty}\phi(v)^{-1}\,dv = \infty$ and
\begin{equation*}
2x'b(\cdot,x,\mu) + |\sigma(\cdot,x,\mu)|^{2} \leq \kappa + \upsilon\phi(|x|^{2}) + \chi\varphi(\vartheta(\mu,\delta_{0})^{2})
\end{equation*}
for every $(x,\mu)\in\R^{m}\times\mathscr{P}$ a.e.~on $[t_{0},\infty)$.
\item\label{con:det.2} $b(s,\cdot,\mu)$ and $\sigma(s,\cdot,\mu)$ are continuous for any $\mu\in\mathscr{P}$ for a.e.~$s\in [t_{0},\infty)$ and for each $n\in\N$ there is $c_{n} > 0$ such that
\begin{equation*}
|b(s,x,\mu)|\vee|\sigma(s,x,\mu)|\leq c_{n}
\end{equation*}
for any $(x,\mu)\in\R^{m}\times\mathscr{P}(\R^{m})$ with $|x|\vee \vartheta(\mu,\delta_{0})\leq n$ for a.e.~$s\in [t_{0},t_{0}+n]$.
\item\label{con:det.3} For every $n\in\N$ there are $\eta_{n}\in\mathscr{L}_{loc}^{1}(\R_{+})$ and a concave function $\rho_{n}\in\mathrm{R}_{c}$ such that $\int_{0}^{1}\rho_{n}(v)^{-1}\,dv = \infty$ and
\begin{equation*}
2(x-\tilde{x})'\big(b(\cdot,x,\mu) - b(\cdot,\tilde{x},\mu)\big) + |\sigma(\cdot,x,\mu) - \sigma(\cdot,\tilde{x},\mu)|^{2} \leq \eta_{n}\rho_{n}(|x-\tilde{x}|^{2})
\end{equation*} 
for all $x,\tilde{x}\in\R^{m}$ with $|x|\vee|\tilde{x}|\leq n$ and each $\mu\in\mathscr{P}$ a.e.~on $[t_{0},\infty)$.
\end{enumerate}

We write $B_{b,loc}(\mathscr{P})$ for the set of all $\mathscr{P}$-valued Borel measurable maps $\mu$ on $[t_{0},\infty)$ such that $\sup_{s\in [t_{0},t]}\vartheta(\mu(s),\delta_{0}) < \infty$ for any $t\in [t_{0},\infty)$. Then a local weak existence result from~\cite{IkeWat89}, Corollary~\ref{co:pathwise uniqueness} and Lemma~\ref{le:2nd moment growth estimate} allow for a concise analysis of the SDE~\eqref{eq:mu-SDE}.

\begin{Proposition}\label{pr:results on mu-SDE}
For $\mu\in B_{b,loc}(\mathscr{P})$ the following three assertions hold:
\begin{enumerate}[(i)]
\item Under~\eqref{con:det.3}, we have pathwise uniqueness for~\eqref{eq:mu-SDE}.
\item Let~\eqref{con:det.1} and~\eqref{con:det.2} be satisfied. Then there is a weak solution $X$ to~\eqref{eq:mu-SDE} with $\mathscr{L}(X_{t_{0}}) = \mathscr{L}(\xi)$.
\item Assume that~\eqref{con:det.1}-\eqref{con:det.3} are valid. Then~\eqref{eq:mu-SDE} admits a unique strong solution $X^{\xi,\mu}$ such that $X_{t_{0}}^{\xi,\mu} = \xi$ a.s.
\end{enumerate}
\end{Proposition}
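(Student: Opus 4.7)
I would prove the three assertions in the order given, each building on the earlier results of the paper and concluding with a Yamada-Watanabe argument.

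For \emph{(i)}, I would apply Corollary~\ref{co:pathwise uniqueness}(ii) to the SDE~\eqref{eq:mu-SDE}. Setting $\hat{\B}(s,\omega,x):=b(s,x,\mu(s))$ and $\hat{\Sigma}(s,\omega,x):=\sigma(s,x,\mu(s))$, these are deterministic coefficients that are independent of any measure variable, and condition~\eqref{con:det.3} on $(b,\sigma)$ transfers verbatim into condition~\eqref{con:3} with the same family $(\eta_{n},\rho_{n})_{n\in\N}$. Since $\int_{0}^{1}\rho_{n}(v)^{-1}\,dv=\infty$ by hypothesis, pathwise uniqueness for~\eqref{eq:mu-SDE} follows.

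For \emph{(ii)}, since $\mu\in B_{b,loc}(\mathscr{P})$ the function $s\mapsto\vartheta(\mu(s),\delta_{0})$ is locally bounded, so~\eqref{con:det.2} forces $b_{\mu}$ and $\sigma_{\mu}$ to be jointly Borel measurable, continuous in $x$ for a.e.~$s$, and bounded on every compact subset of $[t_{0},\infty)\times\R^{m}$. A classical Skorokhod-type local weak existence result (Theorem~IV.2.3 in~\cite{IkeWat89}) then yields, on some filtered probability space, a Brownian motion and a continuous process $X$ defined up to an explosion time $\tau_{\infty}$ that satisfies the SDE until $\tau_{\infty}$ with initial distribution $\mathscr{L}(\xi)$. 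To promote this to a global solution, I would localize the initial condition by setting $\xi_{k}:=\xi\mathbbm{1}_{\{|\xi|\leq k\}}$ so that $E[|\xi_{k}|^{2}]<\infty$, and apply It{\^o}'s formula to $|X_{t\wedge\tau_{n}}|^{2}$ with $\tau_{n}:=\inf\{t\geq t_{0}:|X_{t}|\geq n\}$, using the growth bound in~\eqref{con:det.1} together with the local boundedness of $s\mapsto\chi(s)\varphi(\vartheta(\mu(s),\delta_{0})^{2})$. Jensen's inequality for the concave $\phi$ and Bihari's lemma, together with $\int_{1}^{\infty}\phi(v)^{-1}\,dv=\infty$ (this is precisely the use that Lemma~\ref{le:2nd moment growth estimate} codifies in quantitative form), then bound $E[|X_{t\wedge\tau_{n}}|^{2}\mathbbm{1}_{\{|X_{t_{0}}|\leq k\}}]$ locally in $t$ uniformly in $n$, so that $\tau_{\infty}=\infty$ a.s.~on $\{|X_{t_{0}}|\leq k\}$. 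Sending $k\uparrow\infty$ yields a global weak solution with $\mathscr{L}(X_{t_{0}})=\mathscr{L}(\xi)$.

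For \emph{(iii)}, I would invoke the classical Yamada-Watanabe theorem: pathwise uniqueness from \emph{(i)} together with the weak existence from \emph{(ii)} implies the existence of a unique strong solution $X^{\xi,\mu}$ on the original filtered probability space with $X_{t_{0}}^{\xi,\mu}=\xi$ a.s., since $b_{\mu}$ and $\sigma_{\mu}$ are deterministic in $\omega$ and $\xi$ is $\mathscr{F}_{t_{0}}$-measurable.

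The \textbf{main obstacle} is the non-explosion step in \emph{(ii)}: Lemma~\ref{le:2nd moment growth estimate} requires $E[|X_{t_{0}}|^{2}]<\infty$, which is not assumed here, so one must carefully combine truncation of $\xi$ with Bihari's inequality and let the truncation level tend to infinity. Everything else is a direct citation of already-established results.
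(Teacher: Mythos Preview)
Your proposal is correct and follows essentially the same route as the paper: part~(i) via Corollary~\ref{co:pathwise uniqueness}(ii), part~(ii) via the local weak existence result Theorem~IV.2.3 in~\cite{IkeWat89} combined with a Bihari-type non-explosion argument (Lemma~\ref{le:2nd moment growth estimate}), and part~(iii) via Yamada--Watanabe (Theorem~IV.1.1 in~\cite{IkeWat89}).

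The only noteworthy difference is how the unbounded initial condition in~(ii) is handled. You construct one local weak solution for the given $\xi$ and then show non-explosion on each event $\{|X_{t_{0}}|\leq k\}$ by truncation, letting $k\uparrow\infty$. The paper instead first restricts to essentially bounded $\xi$, where $E[|\xi|^{2}]<\infty$ is automatic and Lemma~\ref{le:2nd moment growth estimate} applies directly to yield a global weak solution; since this covers in particular every deterministic initial value, Remark~IV.2.1 in~\cite{IkeWat89} (a disintegration argument over the initial law) then upgrades weak existence to arbitrary $\mathscr{L}(\xi)$. The paper's route sidesteps the question of whether the Skorokhod construction in Theorem~IV.2.3 applies verbatim to unbounded initial data, at the small cost of citing one extra result; your route is more self-contained but should verify that the local existence theorem indeed covers general initial distributions. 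Both are standard and valid.
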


Now consider the convex space $B_{b,loc}(\mathscr{P}_{p}(\R^{m}))$ of all $\mathscr{P}_{p}(\R^{m})$-valued Borel measurable maps $\mu$ on $[t_{0},\infty)$ such that $\vartheta_{p}(\mu,\delta_{0})$ is locally bounded, endowed with the topology of local uniform convergence.

Then $B_{b,loc}(\mathscr{P}_{p}(\R^{m}))$ is completely metrisable, as $\vartheta_{p}$ is complete, and a sequence $(\mu_{n})_{n\in\N}$ in this space converges locally uniformly to some $\mu\in B_{b,loc}(\mathscr{P}_{p}(\R^{m}))$ if and only if $\lim_{n\uparrow\infty}\sup_{s\in [t_{0},t]}\vartheta_{p}(\mu_{n},\mu)(s) = 0$ for all $t\in [t_{0},\infty)$.

To deduce a strong solution to~\eqref{eq:deterministic McKean-Vlasov} as local uniform limit of a Picard iteration in $B_{b,loc}(\mathscr{P}_{p}(\R^{m}))$, we replace the spatial Osgood condition~\eqref{con:det.3} on compacts sets by a Lipschitz condition, which implies the former:
\begin{enumerate}[label=(D.\arabic*), ref=D.\arabic*, leftmargin=\widthof{(D.4)} + \labelsep]
\setcounter{enumi}{3}
\item\label{con:det.4} There are $\eta\in\mathscr{L}_{loc}^{1}(\R^{2})$ and $\hat{\eta}\in\mathscr{L}_{loc}^{2}(\R_{+}^{2})$ with $\eta_{2}\geq 0$ such that
\begin{align*}
(x-\tilde{x})'\big(b(\cdot,x,\mu) - b(\cdot,\tilde{x},\tilde{\mu})\big)&\leq |x-\tilde{x}|\big(\eta_{1}|x-\tilde{x}| + \eta_{2}\vartheta(\mu,\tilde{\mu})\big)\\
\quad\text{and}\quad |\sigma(\cdot,x,\mu) - \sigma(\cdot,\tilde{x},\tilde{\mu})|&\leq \hat{\eta}_{1}|x-\tilde{x}| + \hat{\eta}_{2}\vartheta(\mu,\tilde{\mu})
\end{align*}
for all $x,\tilde{x}\in\R^{m}$ and every $\mu,\tilde{\mu}\in\mathscr{P}$ a.e.~on $[t_{0},\infty)$.
\end{enumerate}

Assuming that~\eqref{con:det.4} is satisfied, from which~\eqref{con:4} follows for $(\B,\Sigma) = (b,\sigma)$, $l=2$, $\alpha=(1,0)$ and $\beta=(0,1)$, we define $\gamma_{p}\in\mathscr{L}_{loc}^{1}(\R)$ and $\delta\in\mathscr{L}_{loc}^{1}(\R_{+})$ by
\begin{equation}\label{eq:error coefficients}
\begin{split}
\gamma_{p}&:=p \eta_{1} + (p-1)\eta_{2} + c_{p}\big(p\hat{\eta}_{1}^{2} + 2(p-1)\hat{\eta}_{1}\hat{\eta}_{2} + (p-2)\hat{\eta}_{2}^{2}\big)\\
\text{and}\quad \delta&:=\eta_{2} + 2 c_{p}\big(\hat{\eta}_{1}\hat{\eta}_{2} + \hat{\eta}_{2}^{2}\big).
\end{split}
\end{equation}
So, $\gamma_{p}$ and $\delta$ are defined according to the formulas in~\eqref{eq:specific stability coefficient} and~\eqref{eq:drift stability coefficient}, respectively, for the choice $l=2$, $\alpha=(1,0)$, $\beta = (0,0)$ and $\zeta_{2} = \hat{\zeta}_{2} = 1$, as we will use the $L^{p}$-estimate of Proposition~\ref{pr:specific stability moment estimate} when $(\B,\Sigma) = (b_{\mu},\sigma_{\mu})$ and $(\tilde{\B},\tilde{\Sigma}) = (b_{\tilde{\mu}},\sigma_{\tilde{\mu}})$ for $\mu,\tilde{\mu}\in B_{b,loc}(\mathscr{P})$.

If in addition the following affine growth condition for $(b,\sigma)$ is valid, from which~\eqref{con:det.1} follows, by Young's inequality, as $\alpha+\beta\in [0,1]$, then an estimate in $B_{b,loc}(\mathscr{P}_{p}(\R^{m}))$ for the Picard iteration holds.
\begin{enumerate}[label=(D.\arabic*), ref=D.\arabic*, leftmargin=\widthof{(D.5)} + \labelsep]
\setcounter{enumi}{4}
\item\label{con:det.5} There are $l\in\N$, $\alpha,\beta\in [0,1]^{l}$ with $\alpha + \beta\in [0,1]^{l}$, $\upsilon\in\mathscr{L}_{loc}^{1}(\R^{l})$ and $\hat{\upsilon}\in\mathscr{L}_{loc}^{2}(\R_{+}^{l})$ such that
\begin{equation*}
x'b(\cdot,x,\mu)\leq \sum_{k=1}^{l}\upsilon_{k}|x|^{1+\alpha_{k}}\vartheta(\mu,\delta_{0})^{\beta_{k}}\quad\text{and}\quad |\sigma(\cdot,x,\mu)|\leq \sum_{k=1}^{l}\hat{\upsilon}_{k}|x|^{\alpha_{k}}\vartheta(\mu,\delta_{0})^{\beta_{k}}
\end{equation*}
for every $(x,\mu)\in\R^{m}\times\mathscr{P}$ a.e.~on $[t_{0},\infty)$.
\end{enumerate}

Since the previous condition ensures that~\eqref{con:12} is satisfied for $(\B,\Sigma) = (b,\sigma)$, we may utilise the coefficients  $f_{p,\mathscr{P}}\in\mathscr{L}_{loc}^{1}(\R)$ and $g_{p,\mathscr{P}}\in\mathscr{L}_{loc}^{1}(\R_{+})$ given by~\eqref{eq:growth coefficient} and~\eqref{eq:drift growth coefficient} for the choice $\kappa_{k} = \hat{\kappa}_{k}= 1$ for all $k\in\{1,\dots,l\}$. That is,
\begin{equation}\label{eq:growth coefficient 2}
\begin{split}
f_{p,\mathscr{P}} &= p\sum_{\substack{k=1,\\ \alpha_{k}=1}}^{l}\upsilon_{k} +  \sum_{\substack{k=1,\\ \alpha_{k} < 1}}^{l}(p-1+\alpha_{k} + \beta_{k})c_{p,\mathscr{P}}^{\beta_{k}}\upsilon_{k}^{+}\\
&\quad + c_{p}\sum_{j,k=1}^{l} (p-2+\alpha_{j} + \alpha_{k} + \beta_{j} + \beta_{k})c_{p,\mathscr{P}}^{\beta_{j}+\beta_{k}}\hat{\upsilon}_{j}\hat{\upsilon}_{k}
\end{split}
\end{equation}
and
\begin{equation}\label{eq:drift growth coefficient 2}
g_{p,\mathscr{P}} = \sum_{k=1}^{l}(1-\alpha_{k}-\beta_{k})c_{p,\mathscr{P}}^{\beta_{k}}\upsilon_{k}^{+} + c_{p}\sum_{j,k=1}^{l}(2-\alpha_{j}-\alpha_{k}-\beta_{j}-\beta_{k})c_{p,\mathscr{P}}^{\beta_{j}+\beta_{k}}\hat{\upsilon}_{j}\hat{\upsilon}_{k}.
\end{equation}
As a result, we obtain a \emph{unique strong solution} to~\eqref{eq:deterministic McKean-Vlasov} with initial value condition $\xi$ together with a \emph{semi-explicit error estimate} and an \emph{explicit growth estimate}.

\begin{Theorem}\label{th:strong existence}
Assume that~\eqref{con:det.1} and $p=2$ or~\eqref{con:det.5} hold and let~\eqref{con:det.2},~\eqref{con:det.4} be valid. Further, let $\mathscr{P}_{p}(\R^{m})\subset\mathscr{P}$, $\mu_{0}\in B_{b,loc}(\mathscr{P})$, $E[|\xi|^{p}] < \infty$ and $\Theta:[t_{0},\infty)\times\mathscr{P}^{2}\rightarrow\R_{+}$ be given by
\begin{equation}\label{eq:local integrability functional}
\Theta(\cdot,\mu,\tilde{\mu}):= \eta_{2}\vartheta(\mu,\tilde{\mu}) + \hat{\eta}_{2}^{2}\vartheta(\mu,\tilde{\mu})^{2}.
\end{equation}
\begin{enumerate}[(i)]
\item We have pathwise uniqueness for~\eqref{eq:deterministic McKean-Vlasov} with respect to $\Theta$ and there is a unique strong solution $X^{\xi}$ such that $X_{t_{0}}^{\xi} = \xi$ a.s.~and $E[|X^{\xi}|^{p}]$ is locally bounded.
\item The map $[t_{0},\infty)\rightarrow\mathscr{P}_{p}(\R^{m})$, $t\mapsto\mathscr{L}(X_{t}^{\xi})$ is the local uniform limit of the sequence $(\mu_{n})_{n\in\N}$ recursively defined via $\mu_{n}:=\mathscr{L}(X^{\xi,\mu_{n-1}})$ and
\begin{equation}\label{eq:error estimate}
\sup_{s\in [t_{0},t]}\vartheta_{p}(\mu_{n}(s),\mathscr{L}(X_{s}^{\xi})) \leq \Delta(t)\sum_{i=n}^{\infty}\bigg(\frac{c_{p,\mathscr{P}}^{i}}{i!}\bigg)^{\frac{1}{p}}\bigg(\int_{t_{0}}^{t}e^{\int_{s}^{t}\gamma_{p}^{+}(\tilde{s})\,d\tilde{s}}\delta(s)\,ds\bigg)^{\frac{i}{p}}
\end{equation}
for any $t\in [t_{0},\infty)$ with $\Delta(t):=\sup_{s\in [t_{0},t]}c_{p,\mathscr{P}}^{-1}\vartheta(\mathscr{L}(X^{\xi,\mu_{0}}),\mu_{0})(s)$.
\item Suppose that~\eqref{con:det.5} is satisfied. If $\mu_{0}$ lies in the closed and convex set $M_{p}$ of all $\mu\in B_{b,loc}(\mathscr{P}_{p}(\R^{m}))$ such that
\begin{equation}\label{eq:growth estimate}
\vartheta_{p}(\mu(t),\delta_{0})^{p} \leq e^{\int_{t_{0}}^{t}f_{p,\mathscr{P}}(s)\,ds}E\big[|\xi|^{p}\big] + \int_{t_{0}}^{t}e^{\int_{s}^{t}f_{p,\mathscr{P}}(\tilde{s})\,d\tilde{s}}g_{p,\mathscr{P}}(s)\,ds
\end{equation}
for any $t\in [t_{0},\infty)$, then so does $\mu_{n}$ for each $n\in\N$.
\end{enumerate}
\end{Theorem}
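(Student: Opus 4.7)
The strategy is a Picard iteration $\mu_{n+1}:=\mathscr{L}(X^{\xi,\mu_n})$ in the complete metric space $B_{b,\mathrm{loc}}(\mathscr{P}_p(\R^m))$, combining Proposition~\ref{pr:results on mu-SDE} (to produce the iterates $X^{\xi,\mu}$), Lemma~\ref{le:p-th moment growth estimate} (to control their $p$-th moments), and Proposition~\ref{pr:specific stability moment estimate} (to supply both the Cauchy estimate and pathwise uniqueness). First I would observe that~\eqref{con:det.4} implies~\eqref{con:det.3} via the choice $\rho_n(v)=v$ and $\eta_n=2\eta_1^{+}+\hat\eta_1^{2}$, while~\eqref{con:det.1} either holds by assumption (when $p=2$) or follows from~\eqref{con:det.5} by Young's inequality since $\alpha+\beta\in[0,1]^{l}$; together with~\eqref{con:det.2}, Proposition~\ref{pr:results on mu-SDE}(iii) produces the unique strong solution $X^{\xi,\mu}$ to~\eqref{eq:mu-SDE} with $X^{\xi,\mu}_{t_{0}}=\xi$ for every $\mu\in B_{b,\mathrm{loc}}(\mathscr{P})$. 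For assertion~(iii), when $\mu_{0}\in M_{p}$, Lemma~\ref{le:p-th moment growth estimate} applied to $X^{\xi,\mu_{0}}$, viewed as a solution of the McKean-Vlasov equation with measure-independent coefficients $(b_{\mu_{0}},\sigma_{\mu_{0}})$, yields~\eqref{eq:p-th moment growth estimate} for $\mathscr{L}(X^{\xi,\mu_{0}})$; the $M_{p}$-bound on $\vartheta_{p}(\mu_{0},\delta_{0})^{\beta_{k}}$ reproduces exactly~\eqref{eq:growth estimate}, so $T\mu:=\mathscr{L}(X^{\xi,\mu})$ preserves $M_{p}$ and induction places every $\mu_n$ there.

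The technical core is the Cauchy estimate. Under~\eqref{con:det.4}, condition~\eqref{con:4} holds for the pair $(b_{\mu},\sigma_{\mu}),(b_{\tilde\mu},\sigma_{\tilde\mu})$ with $l=2$, $\alpha=(1,0)$, $\beta=(0,0)$, $\zeta_{1}=\hat\zeta_{1}=1$ and the time-dependent weights $\zeta_{2}(t)=\hat\zeta_{2}(t)=\vartheta(\mu(t),\tilde\mu(t))$; a direct substitution into~\eqref{eq:specific stability coefficient}--\eqref{eq:drift stability coefficient} recovers $\gamma_{p}$ and $\delta(s)\vartheta(\mu(s),\tilde\mu(s))^{p}$ from~\eqref{eq:error coefficients}. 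Proposition~\ref{pr:specific stability moment estimate} with vanishing initial difference then gives
\begin{equation*}
A_{n}(t):=E\bigl[|X^{\xi,\mu_{n}}_{t}-X^{\xi,\mu_{n-1}}_{t}|^{p}\bigr]\le\int_{t_{0}}^{t}e^{\int_{s}^{t}\gamma_{p}^{+}(\tilde s)\,d\tilde s}\delta(s)\vartheta(\mu_{n}(s),\mu_{n-1}(s))^{p}\,ds,
\end{equation*}
and the $L^{p}$-domination~\eqref{eq:domination condition} converts $\vartheta(\mu_{n},\mu_{n-1})^{p}$ into $c_{p,\mathscr{P}}^{p}A_{n-1}$ for $n\ge 2$, and into $c_{p,\mathscr{P}}^{p}\Delta(T)^{p}$ for $n=1$ by the definition of $\Delta$. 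An induction on $n$, using the identity $I'(s)=\delta(s)+\gamma_{p}^{+}(s)I(s)$ for $I(t):=\int_{t_{0}}^{t}e^{\int_{s}^{t}\gamma_{p}^{+}}\delta\,ds$ (which gives $\int_{t_{0}}^{t}e^{\int_{s}^{t}\gamma_{p}^{+}}\delta(s)I(s)^{n}\,ds\le I(t)^{n+1}/(n+1)$ via integration by parts), produces the factorial decay $A_{n}(t)\le c_{p,\mathscr{P}}^{np}\Delta(T)^{p}I(t)^{n}/n!$, and summing $\vartheta_{p}(\mu_{n+1},\mu_{n})\le A_{n}^{1/p}$ via the triangle inequality delivers~\eqref{eq:error estimate}. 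This factorial bound is the principal obstacle: the convolution kernel $e^{\int_{s}^{t}\gamma_{p}^{+}}\delta(s)$ depends on the upper endpoint~$t$, so no standard contraction is available, and one must exploit the specific ODE structure of $I$.

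Completeness of $B_{b,\mathrm{loc}}(\mathscr{P}_{p}(\R^{m}))$ then delivers a local uniform limit $\mu_{\infty}$, and the same $L^{p}$-comparison shows that $T$ is continuous on this space; passing to the limit in $T\mu_{n}=\mu_{n+1}$ forces $T\mu_{\infty}=\mu_{\infty}$, so $X^{\xi}:=X^{\xi,\mu_{\infty}}$ is a strong solution to~\eqref{eq:deterministic McKean-Vlasov} with $\mathscr{L}(X^{\xi})=\mu_{\infty}$ and locally bounded $p$-th moment by the argument of paragraph one. For pathwise uniqueness with respect to the functional~\eqref{eq:local integrability functional}, I would apply Proposition~\ref{pr:specific stability moment estimate} once more, now with $(\B,\Sigma)=(\tilde\B,\tilde\Sigma)=(b,\sigma)$ and the choice $l=2$, $\alpha=(1,0)$, $\beta=(0,1)$, $\zeta_{k}=\hat\zeta_{k}=1$ permitted by~\eqref{con:det.4} (admissible since $\alpha_{k}+\beta_{k}=1$ for both $k$); in this configuration $\delta_{p,\mathscr{P}}$ vanishes identically while~\eqref{eq:integrability functional} specialises to~\eqref{eq:local integrability functional}, so~\eqref{eq:specific stability moment estimate} with vanishing initial difference yields $E[|X_{t}-\tilde X_{t}|^{p}]=0$ for every $t\in[t_{0},\infty)$ and hence $X=\tilde X$ a.s.
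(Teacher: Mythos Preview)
Your treatment of parts (i) and (ii) is correct and follows the paper's own route: Picard iteration via Proposition~\ref{pr:results on mu-SDE}, the $L^p$-comparison of Proposition~\ref{pr:specific stability moment estimate} applied to the pair $(b_\mu,\sigma_\mu),(b_{\tilde\mu},\sigma_{\tilde\mu})$ with $l=2$, $\alpha=(1,0)$, $\beta=(0,0)$ and $\zeta_2=\hat\zeta_2=\vartheta(\mu,\tilde\mu)$, and then the factorial Cauchy bound. Where the paper defers the latter to an external fixed-point theorem for time-evolution operators, you spell it out; your substitution $J(s):=e^{\int_s^t\gamma_p^+}I(s)$ with $J'(s)=e^{\int_s^t\gamma_p^+}\delta(s)\ge 0$ and $I(s)\le J(s)$ indeed gives $\int_{t_0}^t e^{\int_s^t\gamma_p^+}\delta(s)I(s)^{n}\,ds\le J(t)^{n+1}/(n+1)=I(t)^{n+1}/(n+1)$, which is exactly what is needed. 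The pathwise uniqueness argument via $\alpha=(1,0)$, $\beta=(0,1)$ so that $\delta_{p,\mathscr{P}}\equiv 0$ is also the paper's.

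There is, however, a genuine gap in your sketch of (iii). When you apply Lemma~\ref{le:p-th moment growth estimate} to $X^{\xi,\mu_0}$ viewed as solving the \emph{measure-independent} equation with coefficients $(b_{\mu_0},\sigma_{\mu_0})$, the resulting bound does \emph{not} use the functions $f_{p,\mathscr{P}}$ and $g_{p,\mathscr{P}}$ of~\eqref{eq:growth coefficient 2}--\eqref{eq:drift growth coefficient 2}; it uses modified coefficients $f_{p,\mathscr{P},0}$ and $g_{p,\mu_0}$ in which the factors $\vartheta(\mu_0,\delta_0)^{\beta_k}$ have been absorbed into the drift term (the paper makes this explicit). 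Your assertion that ``the $M_p$-bound on $\vartheta_p(\mu_0,\delta_0)^{\beta_k}$ reproduces exactly~\eqref{eq:growth estimate}'' skips the real work: one first needs Young's inequality to split $g_{p,\mu_0}\le g_{p,\mathscr{P}}+f_{p,\mathscr{P},1}\,\vartheta_p(\mu_0,\delta_0)^p$ with $f_{p,\mathscr{P},1}:=f_{p,\mathscr{P}}-f_{p,\mathscr{P},0}\ge 0$, and then substitute the $M_p$-bound for $\vartheta_p(\mu_0,\delta_0)^p$ and apply Fubini together with the Lebesgue--Stieltjes fundamental theorem to recombine the two exponentials $e^{\int f_{p,\mathscr{P},0}}$ and $e^{\int f_{p,\mathscr{P},1}}$ into $e^{\int f_{p,\mathscr{P}}}$. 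Without this step the invariance of $M_p$ under $\Psi$ is not established.
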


\begin{Remark}
While the choice $\mu_{0} = \delta_{0}$ yields $\Delta(t)\leq \sup_{s\in [t_{0},t]} E[|X_{s}^{\xi,\delta_{0}}|^{p}]^{1/p}$ for any $t\in [t_{0},\infty)$, we have $\mu_{n} = \mu_{0}$ for all $n\in\N$ whenever $\mu_{0} = \mathscr{L}(X^{\xi})$.
\end{Remark}

\begin{Example}\label{ex:deterministic integral maps}
Let $b_{0}$ and $\sigma_{0}$ be Borel measurable maps on $[t_{0},\infty)\times\R^{m}\times\R^{m}$ with values in $\R^{m}$ and $\R^{m\times d}$, respectively, such that $b_{0}(s,x,\cdot)$, $\sigma_{0}(s,x,\cdot)$ are $\mu$-integrable,
\begin{equation*}\label{eq:integral maps}
b(s,x,\mu) = \int_{\R^{m}}b_{0}(s,x,y)\,\mu(dy)\quad\text{and}\quad \sigma(s,x,\mu) = \int_{\R^{m}}\sigma_{0}(s,x,y)\,\mu(dy)
\end{equation*}
for any $(s,x,\mu)\in [t_{0},\infty)\times\R^{m}\times\mathscr{P}$. Further, let $\mathscr{P}\subset\mathscr{P}_{1}(\R^{m})$ and $\vartheta\geq\vartheta_{1}$, which allows the choice $(\mathscr{P},\vartheta) = (\mathscr{P}_{p}(\R^{m}),\vartheta_{p})$. Then the following three assertions hold:
\begin{enumerate}[(1)]
\item If there are $\eta\in\mathscr{L}_{loc}^{1}(\R^{2})$ and $\hat{\eta}\in\mathscr{L}_{loc}^{2}(\R_{+}^{2})$ with $\eta_{2}\geq 0$ such that
\begin{align*}
(x-\tilde{x})'\big(b_{0}(\cdot,x,y) - b_{0}(\cdot,\tilde{x},\tilde{y})\big) &\leq |x-\tilde{x}|\big(\eta_{1}|x-\tilde{x}| + \eta_{2}|y-\tilde{y}|\big)\\
\text{and}\quad|\sigma_{0}(\cdot,x,y) - \sigma_{0}(\cdot,\tilde{x},\tilde{y})| &\leq \hat{\eta}_{1}|x-\tilde{x}| + \hat{\eta}_{2}|y-\tilde{y}|
\end{align*}
for every $x,\tilde{x},y,\tilde{y}\in\R^{m}$ a.e.~on $[t_{0},\infty)$, then~\eqref{con:det.4} is satisfied.
\item Given $l\in\N$ and $\alpha,\beta\in [0,1]^{l}$ with $\alpha + \beta\in [0,1]^{l}$, suppose that there are $\upsilon\in\mathscr{L}_{loc}^{1}(\R^{l})$ and $\hat{\upsilon}\in\mathscr{L}_{loc}^{2}(\R_{+}^{l})$ such that
\begin{equation*}
x'b_{0}(\cdot,x,y)\leq \sum_{k=1}^{l} \upsilon_{k}|x|^{1+\alpha_{k}}|y|^{\beta_{k}}\quad\text{and}\quad |\sigma_{0}(\cdot,x,y)|\leq \sum_{k=1}^{l} \hat{\upsilon}_{k}|x|^{\alpha_{k}}|y|^{\beta_{k}}
\end{equation*}
for any $x,y\in\R^{m}$ a.e.~on $[t_{0},\infty)$. Then $\sigma_{0}(s,x,\cdot)$ is automatically $\mu$-integrable for all $(x,\mu)\in\R^{m}\times\mathscr{P}$ for a.e.~$s\in [t_{0},\infty)$ and~\eqref{con:det.5} follows.
\item Let $b_{0}(s,\cdot,y)$ and $\sigma_{0}(s,\cdot,y)$ be continuous for all $y\in\R^{m}$ for a.e.~$s\in [t_{0},\infty)$ and assume that for each $n\in\N$ there is $c_{n} > 0$ such that
\begin{equation*}
|b_{0}(s,x,y)| \leq c_{n}(1 + |y|)\quad\text{for all $x,y\in\R^{m}$}
\end{equation*}
with $|x|\leq n$ for a.e.~$s\in [t_{0},t_{0}+n]$. Further, let the estimate for $\sigma_{0}$ in (2) be valid such that $\hat{\upsilon}$ is locally bounded. Then~\eqref{con:det.2} is valid.
\end{enumerate}

For instance, let $N\in\N$, $\hat{b}\in\mathscr{L}_{loc}^{1}(\R_{+}^{N})$ and $\hat{c}\in\mathscr{L}_{loc}^{1}(\R^{N})$ and suppose that $\hat{\alpha}\in ]0,\infty[^{N}$ and $f_{1},\dots,f_{N}$ are $\R^{m}$-valued Lipschitz continuous maps on $\R^{m}\times\R^{m}$ such that
\begin{equation*}
b_{0}(\cdot,x,y) = - x\bigg(\hat{b}_{1}|x|^{\hat{\alpha}_{1} - 1} + \cdots  + \hat{b}_{N} |x|^{\hat{\alpha}_{N} - 1}\bigg) + \hat{c}_{1}f_{1}(x,y) + \cdots + \hat{c}_{N}f_{N}(x,y) 
\end{equation*}
for any $x,y\in\R^{m}$ with $x\neq 0$ and $b_{0}(\cdot,0,\cdot) = \hat{c}_{1}f_{1}(0,\cdot) + \cdots + \hat{c}_{N}f_{N}(0,\cdot)$. Then all the conditions for $b_{0}$ in (1) and (2) are satisfied and the conditions for $b_{0}$ in (3) hold if $\hat{b}$ and $\hat{c}$ are in fact locally bounded. In particular,
\begin{equation*}
\text{if $f_{1},\dots,f_{N}$ are independent of the first variable $x\in\R^{m}$,}
\end{equation*}
then we may take $\eta = 0$, $l=1$ and $(\alpha,\beta) = (0,1)$ in (1) and (2). In the general case, each statement of Theorem~\ref{th:strong existence} holds as soon as $\mathscr{P}_{p}(\R^{m})\subset\mathscr{P}$ and the requirements in (1)-(3) are met. Thereby, the coefficients
\begin{equation*}
\gamma_{p},\quad \delta,\quad f_{p,\mathscr{P}},\quad g_{p,\mathscr{P}}\quad\text{and}\quad\Theta
\end{equation*}
remain exactly as specified in the formulas~\eqref{eq:error coefficients}-\eqref{eq:local integrability functional}.
\end{Example}

\section{Moment and pathwise asymptotic estimations for random It{\^o} processes}\label{se:4}

\subsection{Auxiliary moment bounds}\label{se:4.1}

From now on, let $\hat{\B}\in\mathscr{S}(\R^{m})$ and $\hat{\Sigma}\in\mathscr{S}(\R^{m\times d})$ satisfy $\int_{t_{0}}^{t}|\hat{\B}_{s}| + |\hat{\Sigma}_{s}|^{2}\,ds < \infty$ for all $t\in [t_{0},\infty)$ and $Y$ be a random It{\^o} process with drift $\hat{\B}$ and diffusion $\hat{\Sigma}$, as introduced in~\cite{KalMeyPro21}[Section 4.1]. That is, $Y$ is an $\R^{m}$-valued adapted continuous process such that
\begin{equation*}
Y_{t} = Y_{t_{0}} + \int_{t_{0}}^{t}\hat{\B}_{s}\,ds + \int_{t_{0}}^{t}\hat{\Sigma}_{s}\,dW_{s}\quad\text{for all $t\in [t_{0},\infty)$ a.s.}
\end{equation*}
Our aim is to give quantitative $L^{p}$-estimates for $Y$ in a random seminorm when $p\in [2,\infty)$. For this purpose, we fix $\hat{m}\in\N$ and let $V$ be an $\R^{\hat{m}\times m}$-valued adapted locally absolutely continuous process. For $A\in\R^{\hat{m}\times m}$ we define a seminorm on $\R^{m}$ by
\begin{equation}\label{eq:seminorm}
|x|_{A} := |Ax|
\end{equation}
and note that the induced matrix seminorm defined via $|B|_{A} :=\sup_{x\in\R^{d}\setminus\{0\}} |Bx|_{A}$ satisfies $|B|_{A} = |AB|$ for any $B\in\R^{m\times d}$. In particular, the random (matrix) seminorm $|\cdot|_{V}$ is an adapted continuous process.

The continuous map $\psi_{d}:\R^{d\times \hat{m}}\times\R^{\hat{m}}\setminus\{0\}\rightarrow\R^{d\times d}$ given by $\psi_{d}(A,x) := Ax(Ax)'/|x|^{2}$ satisfies $|\psi_{d}(A,x)| = |x|_{A}^{2}/|x|^{2} \leq |A|^{2}$ for any $A\in\R^{d\times\hat{m}}$ and all $x\in\R^{\hat{m}}\setminus\{0\}$. Thus, the map
\begin{equation*}
\R^{d\times\hat{m}}\times\R^{\hat{m}}\rightarrow\R^{d\times d},\quad (A,x)\mapsto (p-2)|x|_{A}^{p-2}\psi_{d}(A,x)
\end{equation*}
is continuous once we set $\psi_{d}(\cdot,0):=0$. Bearing these considerations in mind, we consider the following representation.

\begin{Lemma}\label{le:auxiliary radial p-th power identity}
Any $u\in C([t_{0},\infty))$ that is locally of bounded variation satisfies
\begin{align*}
&u(t)|Y_{t}|_{V_{t}}^{p} = u(t_{0})|Y_{t_{0}}|_{V_{t_{0}}}^{p} + \int_{t_{0}}^{t}|Y_{s}|_{V_{s}}^{p}\,du(s) + p\int_{t_{0}}^{t}u(s)|Y_{s}|_{V_{s}}^{p-2}(V_{s}Y_{s})'V_{s}\hat{\Sigma}_{s}\,dW_{s}\\
&\quad + p\int_{t_{0}}^{t}u(s)|Y_{s}|_{V_{s}}^{p-2}\bigg((V_{s}Y_{s})'\big(\dot{V}_{s}Y_{s} + V_{s}\hat{\B}_{s}\big) + \frac{1}{2}|\hat{\Sigma}_{s}|_{V_{s}}^{2} + \bigg(\frac{p}{2}-1\bigg)|\psi_{d}((V_{s}\hat{\Sigma}_{s})',V_{s}Y_{s})|\bigg)\,ds
\end{align*}
for any $t\in [t_{0},\infty)$ a.s.
\end{Lemma}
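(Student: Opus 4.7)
The identity is a direct application of It\^o's formula to the composition $|V_tY_t|^p$, combined with deterministic integration by parts against $u$. I would proceed in three steps.

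\emph{Step 1: compute the dynamics of $Z_t:=V_tY_t$.} Since $V$ is locally absolutely continuous with derivative $\dot V$ and $Y$ is an It\^o process with drift $\hat{\B}$ and diffusion $\hat{\Sigma}$, the product rule for continuous semimartingales (with one factor of finite variation, so no covariation term) gives
\[
dZ_t=(\dot V_tY_t+V_t\hat{\B}_t)\,dt+V_t\hat{\Sigma}_t\,dW_t,
\]
and the quadratic variation is $d\langle Z\rangle_t=V_t\hat{\Sigma}_t(V_t\hat{\Sigma}_t)'\,dt$. Note that $|Y_t|_{V_t}=|Z_t|$ by the very definition~\eqref{eq:seminorm} of the seminorm, so proving the stated identity reduces to controlling $u(t)|Z_t|^p$.

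\emph{Step 2: apply It\^o to $f(Z_t)$ with $f(z):=|z|^p$.} For $p\geq 2$ the function $f$ is of class $C^2$ on $\R^{\hat m}$, with $\nabla f(z)=p|z|^{p-2}z$ and $\mathrm{Hess}\,f(z)=p|z|^{p-2}I+p(p-2)|z|^{p-4}zz'$ for $z\neq 0$, extended continuously by $0$ at the origin. This extension is legitimate because either $p=2$ kills the coefficient $p(p-2)$ or, for $p>2$, the matrix $|z|^{p-4}zz'$ has operator norm $|z|^{p-2}\to 0$ as $z\to 0$. Plugging $dZ_t$ and $d\langle Z\rangle_t$ into It\^o's formula, recognising $|V_t\hat{\Sigma}_t|=|\hat{\Sigma}_t|_{V_t}$, and using that a rank-one matrix $vv'$ has Hilbert--Schmidt norm $|v|^2$, so that
\[
|\psi_d((V_t\hat{\Sigma}_t)',V_tY_t)|=\frac{|(V_t\hat{\Sigma}_t)'Z_t|^2}{|Z_t|^2}\quad\text{and hence}\quad|Z_t|^{p-4}|(V_t\hat{\Sigma}_t)'Z_t|^2=|Z_t|^{p-2}|\psi_d((V_t\hat{\Sigma}_t)',V_tY_t)|,
\]
I obtain
\[
d|Z_t|^p=p|Z_t|^{p-2}(V_tY_t)'V_t\hat{\Sigma}_t\,dW_t+p|Z_t|^{p-2}\Bigl((V_tY_t)'(\dot V_tY_t+V_t\hat{\B}_t)+\tfrac12|\hat{\Sigma}_t|_{V_t}^2+(\tfrac p2-1)|\psi_d((V_t\hat{\Sigma}_t)',V_tY_t)|\Bigr)dt,
\]
which is exactly the coefficient structure appearing in the lemma.

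\emph{Step 3: integrate by parts against $u$.} Since $u$ is continuous and of locally bounded variation, it is a deterministic finite variation process with $\langle u,|Z|^p\rangle\equiv 0$, so the product rule yields $d(u(t)|Z_t|^p)=u(t)\,d|Z_t|^p+|Z_t|^p\,du(t)$. Integrating from $t_0$ to $t$ and substituting the expression for $d|Z_t|^p$ from Step~2 produces the claimed identity term by term.

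The argument is routine It\^o calculus; the only delicate point is the $C^2$-regularity of $|\cdot|^p$ at the origin when $p\in[2,4)$. I would dispatch this either by the continuous extension of the Hessian sketched above or, more robustly, by mollifying with $f_\varepsilon(z):=(|z|^2+\varepsilon)^{p/2}$, applying It\^o to $f_\varepsilon(Z_t)$, and passing to the limit $\varepsilon\downarrow 0$ via dominated convergence using the assumed local integrability $\int_{t_0}^t|\hat{\B}_s|+|\hat{\Sigma}_s|^2\,ds<\infty$; both routes give the same identity.
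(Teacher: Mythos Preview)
Your proof is correct and follows essentially the same route as the paper: a direct It\^o computation for $|V_tY_t|^p$ followed by the product rule against the finite-variation factor $u$. The only cosmetic difference is that you first form $Z_t:=V_tY_t$ and apply It\^o to $|z|^p$, whereas the paper applies It\^o once to the function $\varphi(a_1,\dots,a_m,x):=|\sum_j a_jx_j|^p$ of the columns of $V$ and $Y$ jointly; the two computations are equivalent and yield identical terms.
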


\begin{proof}
The function $\varphi:(\R^{\hat{m}})^{m}\times\R^{m}\rightarrow\R_{+}$ defined by $\varphi(a_{1},\dots,a_{m},x) := |\sum_{j=1}^{m}a_{j}x_{j}|^{p}$ is twice continuously differentiable with first-order derivatives with respect to the $j$-th and the last variable given by
\begin{equation*}
D_{a_{j}}\varphi(a_{1},\dots,a_{m},x) = p|x|_{A}^{p-2}(Ax)'x_{j}\quad\text{and}\quad D_{x}\varphi(a_{1},\dots,a_{m},x) = p|x|_{A}^{p-2}(Ax)'A
\end{equation*}
for each $j\in\{1,\dots,m\}$, all $a_{1},\dots,a_{m}\in\R^{\hat{m}}$ and any $x\in\R^{m}$, where $A\in\R^{\hat{m}\times m}$ is of the form $A=(a_{1},\dots,a_{m})$. Its second-order derivative relative to the last variable can be written in the form
\begin{equation*}
D_{x}^{2}\varphi(a_{1},\dots,a_{m},x) = p|x|_{A}^{p-2}\big(A'A + (p-2)\psi_{m}(A',Ax)\big),
\end{equation*}
which in turn gives us that
\begin{equation*}
\mathrm{tr}(D_{x}^{2}\varphi(a_{1},\dots,a_{m},x)BB') = p|x|_{A}^{p-2}\big(|B|_{A}^{2} + (p-2)|\psi_{d}((AB)',Ax)|\big)
\end{equation*}
for any $B\in\R^{m\times d}$. Moreover, from It{\^o}'s formula we know that
\begin{align*}
|Y_{t}|_{V_{t}}^{p} &- |Y_{t_{0}}|_{V_{t_{0}}}^{p} = \sum_{j=1}^{m}\int_{t_{0}}^{t}D_{a_{j}}\varphi(\null_{1}V_{s},\dots,\null_{m}V_{s},Y_{s})\,d\null_{j}V_{s}\\
&\quad + \int_{t_{0}}^{t}D_{x}\varphi(\null_{1}V_{s},\dots,\null_{m}V_{s},Y_{s})\,dY_{s} + \frac{1}{2}\int_{t_{0}}^{t}\mathrm{tr}\big(D_{x}^{2}\varphi(\null_{1}V_{s},\dots,\null_{m}V_{s},Y_{s})\hat{\Sigma}_{s}\hat{\Sigma}_{s}'\big)\,ds
\end{align*}
for each $t\in [t_{0},\infty)$ a.s., where $\null_{j}V$ stands for the $j$-th column of the process $V$ for all $j\in\{1,\dots,m\}$. Hence, It{\^o}'s product rule completes the verification.
\end{proof}

We recall the constant $c_{p}=(p-1)/2$ and employ the just considered bound on the map $\psi_{d}$ to get an auxiliary $L^{p}$-estimate.

\begin{Lemma}\label{le:auxiliary p-th moment estimate}
Let $E[|Y_{t_{0}}|_{V_{t_{0}}}^{p}] < \infty$ and $\tau$ be a stopping time. Assume there is $Z\in\mathscr{S}(\R)$ such that $\int_{t_{0}}^{t}|Z_{s}|\,ds < \infty$ for any $t\in [t_{0},\infty)$ and
\begin{equation*}
(V_{s}Y_{s})'\big(\dot{V}_{s}Y_{s} + V_{s}\hat{\B}_{s}) + c_{p}|\hat{\Sigma}_{s}|_{V_{s}}^{2}\leq Z_{s}
\end{equation*}
for a.e.~$s\in [t_{0},\infty)$ with $s < \tau$ a.s. If $u:[t_{0},\infty)\rightarrow\R_{+}$ is locally absolutely continuous and $E[\int_{t_{0}}^{t\wedge\tau}|Y_{s}|_{V_{s}}^{p-2}|\dot{u}(s)|Y_{s}|_{V_{s}}^{2} + u(s)p Z_{s}|\,ds] < \infty$, then
\begin{equation*}
E\big[u(t\wedge\tau)|Y_{t}^{\tau}|^{p}_{V_{t}^{\tau}}\big] \leq u(t_{0})E\big[|Y_{t_{0}}|_{V_{t_{0}}}^{p}\big] + E\bigg[\int_{t_{0}}^{t\wedge\tau}|Y_{s}|_{V_{s}}^{p-2}\big(\dot{u}(s)|Y_{s}|_{V_{s}}^{2} + u(s)p Z_{s}\big)\,ds\bigg]
\end{equation*}
for every $t\in [t_{0},\infty)$.
\end{Lemma}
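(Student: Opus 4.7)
My approach is to apply Lemma~\ref{le:auxiliary radial p-th power identity} to the locally absolutely continuous function $u$, noting that $du(s)=\dot{u}(s)\,ds$. In the Lebesgue integrand of that lemma, the bound $|\psi_{d}(A,x)|\leq|A|^{2}$ recalled just before it specialises to $|\psi_{d}((V_{s}\hat{\Sigma}_{s})',V_{s}Y_{s})|\leq|\hat{\Sigma}_{s}|_{V_{s}}^{2}$, and since $p\geq 2$ combining this with the $\tfrac{1}{2}|\hat{\Sigma}_{s}|_{V_{s}}^{2}$ term yields the coefficient $c_{p}|\hat{\Sigma}_{s}|_{V_{s}}^{2}$. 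The hypothesis on $Z$ together with the positivity of $u$ and of $|Y_{s}|_{V_{s}}^{p-2}$ then upgrades the identity of Lemma~\ref{le:auxiliary radial p-th power identity} into the pathwise inequality
\begin{equation*}
u(t\wedge\tau)|Y_{t}^{\tau}|_{V_{t}^{\tau}}^{p}\leq u(t_{0})|Y_{t_{0}}|_{V_{t_{0}}}^{p} + \int_{t_{0}}^{t\wedge\tau} |Y_{s}|_{V_{s}}^{p-2}\bigl(\dot{u}(s)|Y_{s}|_{V_{s}}^{2} + p u(s) Z_{s}\bigr)\,ds + M_{t\wedge\tau},
\end{equation*}
where $M$ denotes the stochastic integral appearing in Lemma~\ref{le:auxiliary radial p-th power identity}.

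Next I localise via $\tau_{n}:=\inf\{t\geq t_{0}:|Y_{t}|\vee|V_{t}|\vee\int_{t_{0}}^{t}|\hat{\Sigma}_{s}|^{2}\,ds\geq n\}$, which grows to infinity a.s. On $[t_{0},\tau\wedge\tau_{n}]$ the integrand of $M$ is bounded, using local boundedness of the continuous function $u$ together with the controls imposed by $\tau_{n}$, so $M^{\tau\wedge\tau_{n}}$ is a genuine $L^{2}$-martingale with vanishing expectation. Evaluating the pathwise bound at $t\wedge\tau_{n}$ in place of $t$ and taking expectations therefore eliminates $M$ and yields the same inequality with $t\wedge\tau$ replaced by $t\wedge\tau\wedge\tau_{n}$, both inside the left-hand expectation and as the upper limit of the Lebesgue integral on the right.

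Finally I let $n\uparrow\infty$. Pathwise continuity of $Y$, $V$ and $u$ together with Fatou's lemma bound the left-hand side below by $E[u(t\wedge\tau)|Y_{t}^{\tau}|_{V_{t}^{\tau}}^{p}]$. On the right-hand side, the assumption $E[\int_{t_{0}}^{t\wedge\tau}|Y_{s}|_{V_{s}}^{p-2}|\dot{u}(s)|Y_{s}|_{V_{s}}^{2}+pu(s)Z_{s}|\,ds]<\infty$ furnishes an integrable dominating function, so dominated convergence delivers the claimed estimate. The principal technicality is the localisation: for $p>2$ the factor $|Y_{s}|_{V_{s}}^{p-2}$ can be unbounded in $s$, so $\tau_{n}$ must simultaneously control $|Y|$, $|V|$ and $\int|\hat{\Sigma}|^{2}\,ds$ in order to secure the $L^{2}$-martingale property of $M^{\tau\wedge\tau_{n}}$; once this choice is made, the argument reduces to a direct consequence of Lemma~\ref{le:auxiliary radial p-th power identity} combined with the elementary bound on $\psi_{d}$ and dominated convergence.
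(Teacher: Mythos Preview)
Your proof is correct and mirrors the paper's argument: localize so the stochastic integral becomes a true $L^{2}$-martingale, take expectations, then pass to the limit via Fatou's lemma and dominated convergence. One small imprecision: the integrand of $M$ is not literally bounded on $[t_{0},\tau\wedge\tau_{n}]$ since $\hat{\Sigma}$ need not be, but its square is integrable there (which is what you actually need for the $L^{2}$-martingale property), and the paper's localizing time---controlling $|Y|_{V}$ and $\int(|Z|+|\hat{\Sigma}|_{V}^{2})$ directly rather than $|Y|$, $|V|$, $\int|\hat{\Sigma}|^{2}$ separately---makes this slightly cleaner.
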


\begin{proof}
By the preceding lemma, the claimed inequality holds when $\tau$ is replaced by the stopping time
\begin{equation}\label{eq:hitting time for Ito processes}
\tau_{k}:=\inf\bigg\{t\in [t_{0},\infty)\,\bigg|\, |Y_{t}|_{V_{t}}\geq k\text { or } \int_{t_{0}}^{t}|Z_{s}| + |\hat{\Sigma}_{s}|_{V_{s}}^{2}\,ds\geq k\bigg\}\wedge\tau
\end{equation}
for each fixed $k\in\N$, because from $|(V\hat{\Sigma})'(VY)|\leq |\hat{\Sigma}|_{V} |Y|_{V}$ we readily deduce that $\int_{t_{0}}^{\cdot\wedge\tau_{k}}u(s)|Y_{s}|_{V_{s}}^{p-2}(V_{s}Y_{s})'V_{s}\hat{\Sigma}_{s}\,dW_{s}$ is a square-integrable martingale. Hence, Fatou's lemma and dominated convergence give the asserted bound, since $\sup_{k\in\N}\tau_{k} = \tau.$
\end{proof}

Now we apply a Burkholder-Davis-Gundy inequality for stochastic integrals driven by $W$ from~\cite{Mao08}[Theorem 7.3]. For $q\in [2,\infty)$ set $\overline{w}_{q}:=(q^{q+1}/(2(q-1)^{q-1}))^{q/2}$, if $q > 2$, and $\overline{w}_{q}:=4$, if $q = 2$. Then
\begin{equation}\label{eq:BDG-inequality}
E\bigg[\sup_{\tilde{s}\in [t_{0},t]}\bigg|\int_{t_{0}}^{\tilde{s}}X_{s}\,dW_{s}\bigg|^{q}\bigg] \leq \overline{w}_{q}E\bigg[\bigg(\int_{t_{0}}^{t}|X_{s}|^{2}\,ds\bigg)^{\frac{q}{2}}\bigg]
\end{equation}
for every $X\in\mathscr{S}(\R^{m\times d})$ and all $t\in [t_{0},\infty)$ satisfying $\int_{t_{0}}^{t}|X_{s}|^{2}\,ds < \infty$. The next result is an auxiliary moment estimate in a supremum seminorm.

\begin{Proposition}\label{pr:auxiliary p-th moment estimate in supremum seminorm}
Let $q\in [1,\infty)$, $\tau$ be a stopping time and $Z\in\mathscr{S}(\R)$ be such that $\int_{t_{0}}^{t}|Z_{s}|\,ds < \infty$ for each $t\in [t_{0},\infty)$ and
\begin{equation*}
(V_{s}Y_{s})'\big(\dot{V}_{s}Y_{s} + V_{s}\hat{\B}_{s}\big) \leq Z_{s}
\end{equation*}
for a.e.~$s\in [t_{0},\infty)$ with $s < \tau$ a.s. Then $\hat{Z}:= Z + c_{p}|\hat{\Sigma}|_{V}$ and any locally absolutely continuous function $u:[t_{0},\infty)\rightarrow\R_{+}$ satisfy
\begin{align*}
E\bigg[\bigg(\sup_{s\in [t_{0},t]} u(s\wedge\tau)|Y_{s}^{\tau}|_{V_{s}^{\tau}}^{p} &- u(t_{0})|Y_{t_{0}}|_{V_{t_{0}}}^{p}\bigg)^{q}\bigg]^{\frac{1}{q}}\\
&\leq E\bigg[\bigg(\int_{t_{0}}^{t\wedge\tau}|Y_{s}|_{V_{s}}^{p-2}\big(\dot{u}(s)|Y_{s}|_{V_{s}}^{2} + u(s)p\hat{Z}_{s}\big)^{+}\,ds\bigg)^{q}\bigg]^{\frac{1}{q}}\\
&\quad +  pE\bigg[\overline{w}_{q_{0}}\bigg(\int_{t_{0}}^{t\wedge\tau}u(s)^{2}|Y_{s}|_{V_{s}}^{2p-2}|\hat{\Sigma}_{s}|_{V_{s}}^{2}\,ds\bigg)^{\frac{q_{0}}{2}}\bigg]^{\frac{1}{q_{0}}}
\end{align*}
for each $t\in [t_{0},\infty)$ with $q_{0}:= q\vee 2$.
\end{Proposition}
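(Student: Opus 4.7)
My strategy is to combine the radial $p$-th power identity of Lemma~\ref{le:auxiliary radial p-th power identity} with the pointwise bound $|\psi_d((V\hat\Sigma)',VY)|\le|\hat\Sigma|_V^2$ to collapse the drift to $p|Y|_V^{p-2}\hat Z$, localize along the stopping times $\tau_k$ defined in~\eqref{eq:hitting time for Ito processes}, and then separate the finite-variation and martingale parts; taking suprema followed by Minkowski's inequality and the Burkholder-Davis-Gundy estimate~\eqref{eq:BDG-inequality} will produce the two summands in the conclusion.

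Concretely, applying Lemma~\ref{le:auxiliary radial p-th power identity} at time $s\wedge\tau_k$ and factoring $|Y_r|_{V_r}^{p-2}$ out of both the $du$-integral and the $ds$-integral yields
\begin{equation*}
u(s\wedge\tau_k)|Y_{s\wedge\tau_k}|_{V_{s\wedge\tau_k}}^p - u(t_0)|Y_{t_0}|_{V_{t_0}}^p = A^{(k)}_s + M^{(k)}_s,
\end{equation*}
with $M^{(k)}$ the stopped stochastic integral $p\int_{t_0}^{\cdot\wedge\tau_k}u|Y|_V^{p-2}(VY)'V\hat\Sigma\,dW$ and $A^{(k)}_s$ a Lebesgue integral whose integrand $|Y_r|_{V_r}^{p-2}\bigl(\dot u(r)|Y_r|_{V_r}^2 + pu(r)\Phi_r\bigr)$, with $\Phi_r$ the drift bracket of Lemma~\ref{le:auxiliary radial p-th power identity}, is bounded on $[t_0,\tau_k)$ by $|Y_r|_{V_r}^{p-2}\bigl(\dot u(r)|Y_r|_{V_r}^2 + pu(r)\hat Z_r\bigr)$ thanks to the hypothesis and the $\psi_d$-bound. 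Since $|Y|_V^{p-2}\ge 0$, the positive part distributes over this factor, so
\begin{equation*}
\sup_{s\in[t_0,t]}A^{(k)}_s \le \int_{t_0}^{t\wedge\tau_k}|Y_r|_{V_r}^{p-2}\bigl(\dot u(r)|Y_r|_{V_r}^2 + pu(r)\hat Z_r\bigr)^+\,dr.
\end{equation*}

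Taking the supremum in $s$ of the identity (the left-hand side is a.s.\ nonnegative because the sup is attained at $s=t_0$) and applying Minkowski's inequality in $L^q$ reduces the task to estimating $\|\sup_{s\in[t_0,t]}|M^{(k)}_s|\|_q$. For $q\in[1,2)$ I enlarge this to the $L^{q_0}$-norm by Jensen, permissible since $q\le q_0$ and $P$ is a probability measure; then~\eqref{eq:BDG-inequality} applies to the integrand $pu|Y|_V^{p-2}(VY)'V\hat\Sigma$, whose squared norm is controlled via Cauchy-Schwarz by $p^2u^2|Y|_V^{2p-2}|\hat\Sigma|_V^2$, yielding exactly the second term in the conclusion with the constant $p$ emerging as $\sqrt{p^2}$. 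The localization by $\tau_k$ ensures that $M^{(k)}$ is a genuine square-integrable martingale and that the quadratic variation inside the expectation is finite, so BDG applies without reservation.

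Finally I let $k\uparrow\infty$. Pathwise continuity of $Y$, $V$ and $u$ together with $\tau_k\uparrow\tau$ gives a.s.\ convergence of the left-hand side to its $\tau$-analogue; the two right-hand integrands are nonnegative and monotone non-decreasing in $k$, so Fatou's lemma on the left and monotone convergence on the right deliver the stated bound, which is trivial whenever either right-hand summand is infinite in the limit. The main technical point is accommodating all $q\in[1,\infty)$ through the $L^q\hookrightarrow L^{q_0}$ inclusion; a secondary one is to distribute the positive part correctly over the nonnegative factor $|Y|_V^{p-2}$ so that the $s$-independent dominant appearing in the first term of the conclusion genuinely bounds $\sup_{s\in[t_0,t]}A^{(k)}_s$.
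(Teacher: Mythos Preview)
Your proposal is correct and follows essentially the same route as the paper's proof: localize via the stopping times $\tau_k$ from~\eqref{eq:hitting time for Ito processes}, apply Lemma~\ref{le:auxiliary radial p-th power identity} together with the $\psi_d$-bound to collapse the drift to $p|Y|_V^{p-2}\hat Z$, split into finite-variation and martingale parts, control the former by passing to positive parts and the latter via the $L^q\hookrightarrow L^{q_0}$ embedding followed by~\eqref{eq:BDG-inequality}, then pass to the limit $k\uparrow\infty$ by monotone convergence. The paper phrases the $L^q\to L^{q_0}$ step as H\"older's inequality rather than Jensen, but this is the same estimate on a probability space.
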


\begin{proof}
Because $\sup_{\tilde{s}\in [t_{0},t]}\int_{t_{0}}^{\tilde{s}}\kappa(s)\,ds \leq \int_{t_{0}}^{t}\kappa^{+}(s)\,ds$ for every $\kappa\in\mathscr{L}_{loc}^{1}(\R)$, we infer from Lemma~\ref{le:auxiliary radial p-th power identity} that the stopping time~\eqref{eq:hitting time for Ito processes} satisfies
\begin{equation}\label{eq:auxiliary supremum radial p-th power identity}
\begin{split}
\sup_{s\in [t_{0},t]}u(s\wedge\tau_{k})|Y_{s}^{\tau_{k}}|_{V_{s}^{\tau_{k}}}^{p} &\leq u(t_{0})|Y_{t_{0}}|_{V_{t_{0}}}^{p} + \int_{t_{0}}^{t\wedge\tau_{k}}|Y_{s}|_{V_{s}}^{p-2}\big(\dot{u}(s)|Y_{s}|_{V_{s}}^{2} + u(s)p\hat{Z}_{s}\big)^{+}\,ds\\
&\quad + \sup_{s\in [t_{0},t]} I_{s}^{\tau_{k}}\quad\text{a.s.}
\end{split}
\end{equation}
for any fixed $k\in\N$ and $t\in [t_{0},\infty)$, where $I$ denotes a continuous local martingale with $I_{t_{0}} = 0$ that is indistinguishable from the stochastic integral
\begin{equation*}
p\int_{t_{0}}^{\cdot}u(s)|Y_{s}|_{V_{s}}^{p-2}(V_{s}Y_{s})'V_{s}\hat{\Sigma}_{s}\,dW_{s}.
\end{equation*}
Thus, H\oe lder's inequality,~\eqref{eq:BDG-inequality} and the fact that $|(V\hat{\Sigma})'(VY)| \leq |\hat{\Sigma}|_{V}|Y|_{V}$ yield that
\begin{equation}\label{eq:auxiliary stochastic integral moment estimate}
\overline{w}_{q_{0}}^{-1}E\bigg[\sup_{s\in [t_{0},t]} |I_{s}^{\tau_{k}}|^{q}\bigg]^{\frac{q_{0}}{q}} \leq E\bigg[\bigg(\int_{t_{0}}^{t\wedge\tau_{k}}p^{2}u(s)^{2}|Y_{s}|_{V_{s}}^{2p-2}|\hat{\Sigma}_{s}|_{V_{s}}^{2}\,ds\bigg)^{\frac{q_{0}}{2}}\bigg].
\end{equation}
For this reason, the claimed inequality follows when $\tau$ is replaced by $\tau_{k}$ from~\eqref{eq:auxiliary supremum radial p-th power identity},~\eqref{eq:auxiliary stochastic integral moment estimate} and the triangle inequality in the $L^{q}$-norm. Since $\sup_{k\in\N} \tau_{k} = \tau$, monotone convergence completes the proof.
\end{proof}

\subsection{Quantitative moment estimates}\label{se:4.2}

First, we derive an $L^{2}$-estimate from Section~\ref{se:4.1} and Bihari's inequality. For this purpose, let $l\in\N$ and consider the following assumption on the It{\^o} process $Y$:
\begin{enumerate}[label=(A.\arabic*), ref=A.\arabic*, leftmargin=\widthof{(A.1)} + \labelsep]
\item\label{con:assumption 1} There are $\alpha\in (0,1]^{l}$, $\kappa\in\mathscr{S}_{loc}^{1}(\R_{+})$, a measurable map $\theta:[t_{0},\infty)\rightarrow\R_{+}^{l}$, $c\in\R_{+}^{l}$ and for each $k\in\{1,\dots,l\}$ there are
\begin{equation*}
\rho_{k},\varrho_{k}\in\mathrm{R}_{c},\quad\null_{k}\eta\in\mathscr{S}_{loc}^{\frac{1}{1-\alpha_{k}}}(\R_{+})\quad\text{and}\quad\null_{k}\lambda\in\mathscr{S}_{loc}^{1}(\R_{+})
\end{equation*}
such that $\rho_{k}^{1/\alpha_{k}}$ is concave, $\varrho_{k}$ is increasing, $\theta_{k}(s) \leq c_{k}E\big[|Y_{s}|_{V_{s}}^{2}\big]^{1/2}$ for a.e.~$s\in [t_{0},\infty)$ with $E[\null_{k}\lambda_{s}] > 0$ and
\begin{equation*}
2(VY)'\big(\dot{V}Y + V\hat{\B}\big) + |\hat{\Sigma}|_{V}^{2}\leq \kappa + \sum_{k=1}^{l}\null_{k}\eta\rho_{k}(|Y|_{V}^{2}) + \null_{k}\lambda\varrho_{k}\circ\theta_{k}^{2}
\end{equation*}
a.e.~on $[t_{0},\infty)$ a.s.
\end{enumerate}
Whenever~\eqref{con:assumption 1} holds and $\beta\in (0,1]^{l}$, then the functionals in~\eqref{eq:essential functionals} gives rise to two functions $\gamma,\delta\in\mathscr{L}_{loc}^{1}(\R_{+})$ defined by
\begin{equation*}
\gamma := \sum_{k=1}^{l}\alpha_{k}\big[\null_{k}\eta\big]_{\frac{1}{1-\alpha_{k}}} + \beta_{k}E\big[\null_{k}\lambda\big]\quad\text{and}\quad \delta := \sum_{k=1}^{l}(1-\alpha_{k})\big[\null_{k}\eta\big]_{\frac{1}{1-\alpha_{k}}} + (1-\beta_{k})E\big[\null_{k}\lambda\big].
\end{equation*}
For each $\rho\in\mathrm{R}_{c}$ let us recall the domain $D_{\rho}$ and the functions $\Phi_{\rho}\in C^{1}((0,\infty))$ and $\Psi_{\rho}\in C(D_{\rho},\R_{+})$ given by~\eqref{eq:rho-function 1} and~\eqref{eq:rho-function 2}, which allow for a general bound.

\begin{Proposition}\label{pr:abstract second moment estimate}
Let~\eqref{con:assumption 1} hold, $E[|Y_{t_{0}}|_{V_{t_{0}}}^{2}] < \infty$, $\sum_{k=1}^{l}E[\null_{k}\lambda]\varrho_{k}\circ\theta_{k}^{2}$ be locally integrable and $\rho_{0},\varrho_{0}\in C(\R_{+})$ be given by
\begin{equation*}
\rho_{0}(v) := \max_{k\in\{1,\dots,l\}}\rho_{k}(v)^{\frac{1}{\alpha_{k}}}\quad\text{and}\quad\varrho_{0}(v) := \rho_{0}(v)\vee\max_{k\in\{1,\dots,l\}}\varrho_{k}(c_{k}^{2}v)^{\frac{1}{\beta_{k}}}.
\end{equation*}
If $\Phi_{\rho_{0}}(\infty) = \infty$ or $\sum_{k=1}^{l}E[\null_{k}\eta\rho_{k}(|Y|_{V}^{2})]$ is locally integrable, then $E[|Y|_{V}^{2}]$ is locally bounded and
\begin{equation*}
\sup_{s\in [t_{0},t]} E\big[|Y_{s}|_{V_{s}}^{2}\big] \leq \Psi_{\varrho_{0}}\bigg(E\big[|Y_{t_{0}}|_{V_{t_{0}}}^{2}\big] + \int_{t_{0}}^{t}E[\kappa_{s}] + \delta(s)\,ds,\int_{t_{0}}^{t} \gamma(s)\,ds\bigg)
\end{equation*}
for all $t\in [t_{0},t_{0}^{+})$, where $t_{0}^{+}$ stands for the supremum over all $t\in [t_{0},\infty)$ for which
\begin{equation*}
\bigg(E\big[|Y_{t_{0}}|_{V_{t_{0}}}^{2}\big] + \int_{t_{0}}^{t}E[\kappa_{s}] + \delta(s)\,ds,\int_{t_{0}}^{t} \gamma(s)\,ds\bigg)\in D_{\varrho_{0}}.
\end{equation*}
\end{Proposition}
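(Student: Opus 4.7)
The plan is to combine the auxiliary moment bound of Lemma~\ref{le:auxiliary p-th moment estimate}, used with $p=2$ and $u\equiv 1$, with Jensen's inequality, the essential functional inequality~\eqref{eq:essential inequality}, and Bihari's inequality in the form encoded by $\Psi_{\varrho_{0}}$.

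First I would localize with stopping times $\tau_{n}$ as in~\eqref{eq:hitting time for Ito processes} and set $\phi_{n}(t):=E[|Y_{t}^{\tau_{n}}|_{V_{t}^{\tau_{n}}}^{2}]$. Since $c_{p}=1/2$ when $p=2$, the bound in~\eqref{con:assumption 1} exhibits
\[
Z:=\tfrac{1}{2}\Bigl(\kappa + \sum_{k=1}^{l}\null_{k}\eta\,\rho_{k}(|Y|_{V}^{2}) + \null_{k}\lambda\,\varrho_{k}\circ\theta_{k}^{2}\Bigr)
\]
as a valid dominating process for Lemma~\ref{le:auxiliary p-th moment estimate}; combined with Fubini's theorem, that lemma produces an integral inequality for $\phi_{n}(t)$ whose integrand splits into a $\kappa$-term, a family of $\rho_{k}$-terms and a family of $\varrho_{k}$-terms.

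Each $\rho_{k}$-term is estimated by applying~\eqref{eq:essential inequality} with $p=1$, $\gamma=1-\alpha_{k}$, $\beta=0$, $x=1$ to $X=\null_{k}\eta_{s}$ and $\tilde{Y}=\rho_{k}(|Y_{s}^{\tau_{n}}|_{V_{s}^{\tau_{n}}}^{2})^{1/\alpha_{k}}$, then invoking Jensen's inequality for the concave function $\rho_{k}^{1/\alpha_{k}}$, producing
\[
E\bigl[\null_{k}\eta_{s}\,\rho_{k}(|Y_{s}^{\tau_{n}}|_{V_{s}^{\tau_{n}}}^{2})\bigr]\leq \bigl[\null_{k}\eta_{s}\bigr]_{\frac{1}{1-\alpha_{k}}}\bigl((1-\alpha_{k}) + \alpha_{k}\,\varrho_{0}(\phi_{n}(s))\bigr),
\]
where $\rho_{k}(v)^{1/\alpha_{k}}\leq\rho_{0}(v)\leq\varrho_{0}(v)$ is used at the end. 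For each $\varrho_{k}$-term, the determinism of $\theta_{k}$ together with the hypothesis $\theta_{k}(s)\leq c_{k}E[|Y_{s}|_{V_{s}}^{2}]^{1/2}$ (where $E[\null_{k}\lambda_{s}]>0$), the monotonicity of $\varrho_{k}$, the bound $\varrho_{k}(c_{k}^{2}v)^{1/\beta_{k}}\leq\varrho_{0}(v)$, and the scalar Bernoulli inequality $v^{\beta_{k}}\leq\beta_{k}v+(1-\beta_{k})$ yield
\[
E[\null_{k}\lambda_{s}]\,\varrho_{k}(\theta_{k}(s)^{2})\leq E[\null_{k}\lambda_{s}]\bigl((1-\beta_{k}) + \beta_{k}\,\varrho_{0}(E[|Y_{s}|_{V_{s}}^{2}])\bigr).
\]

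Summing over $k$, letting $n\to\infty$ via Fatou's lemma (justified either by $\Phi_{\rho_{0}}(\infty)=\infty$ or by the local integrability of $\sum_{k}E[\null_{k}\eta\rho_{k}(|Y|_{V}^{2})]$, which secures finiteness of the integrand), and passing to the supremum on $[t_{0},t]$, I would obtain, for $f(t):=\sup_{s\in[t_{0},t]}E[|Y_{s}|_{V_{s}}^{2}]$, the scalar integral inequality
\[
f(t)\leq E[|Y_{t_{0}}|_{V_{t_{0}}}^{2}] + \int_{t_{0}}^{t}\bigl(E[\kappa_{s}]+\delta(s)\bigr)\,ds + \int_{t_{0}}^{t}\gamma(s)\,\varrho_{0}(f(s))\,ds
\]
on any subinterval where $f$ remains finite, and then close the estimate with Bihari's inequality, which is exactly the analytic content of $\Psi_{\varrho_{0}}$. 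The main obstacle I anticipate is the interplay between the stopped and unstopped processes, because the $\theta_{k}$ hypothesis is formulated in terms of $E[|Y|_{V}^{2}]$ rather than its stopped analogue $\phi_{n}$: this is best handled by a two-stage argument, first using the local integrability of $\sum_{k}E[\null_{k}\lambda]\varrho_{k}\circ\theta_{k}^{2}$ to deduce local finiteness of $E[|Y|_{V}^{2}]$, and then sharpening via the $\theta_{k}$ bound to obtain the Gronwall form above before Bihari's inequality is invoked on $f$.
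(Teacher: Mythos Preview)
Your proposal is correct and follows essentially the same route as the paper: apply Lemma~\ref{le:auxiliary p-th moment estimate} with $p=2$ and $u\equiv 1$ to the stopped process, bound the $\rho_{k}$-terms via~\eqref{eq:essential inequality} together with Jensen's inequality for the concave functions $\rho_{k}^{1/\alpha_{k}}$, bound the $\varrho_{k}$-terms via Young's (your ``Bernoulli'') inequality and the hypothesis on $\theta_{k}$, and close with Bihari's inequality. Your two-stage remedy for the stopped/unstopped discrepancy is exactly what the paper does: in the first pass the $\varrho_{k}$-terms are absorbed into the locally integrable inhomogeneity $\hat{\kappa}$ and Bihari is applied with $\rho_{0}$ alone (this is where $\Phi_{\rho_{0}}(\infty)=\infty$ enters) to obtain local boundedness of $E[|Y|_{V}^{2}]$, after which the $\theta_{k}$-bound can be used and Bihari applied a second time with $\varrho_{0}$ to reach the final estimate.
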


\begin{proof}
We take the stopping time $\tau_{n}:=\inf\{t\in [t_{0},\infty)\,|\, |Y_{t}|_{V_{t}}\geq n\}$ for given $n\in\N$, define $\hat{\kappa} := E[\kappa] + \sum_{k=1}^{l}E[\null_{k}\lambda]\varrho_{k}\circ\theta_{k}^{2}$ and observe that
\begin{equation}\label{eq:abstract second moment auxiliary estimate 1}
E\big[|Y_{t}^{\tau_{n}}|_{V_{t}^{\tau_{n}}}^{2}\big] \leq E\big[|Y_{t_{0}}|_{V_{t_{0}}}^{2}\big] + \int_{t_{0}}^{t}\hat{\kappa}(s) + \sum_{k=1}^{l}E\big[\null_{k}\eta_{s}\rho_{k}(|Y_{s}|_{V_{s}}^{2})\mathbbm{1}_{\{\tau_{n} > s\}}\big]\,ds
\end{equation}
for fixed $t\in [t_{0},\infty)$, according to Lemma~\ref{le:auxiliary p-th moment estimate}. Moreover, for any stopping time $\tau$ for which $E[|Y^{\tau}|_{V^{\tau}}^{2}] < \infty$ we infer from~\eqref{eq:essential inequality} and the concavity of $\rho^{1/\alpha_{k}}$ that
\begin{equation}\label{eq:abstract second moment auxiliary estimate 2}
E\big[\null_{k}\eta_{s}\rho_{k}(|Y_{s}|_{V_{s}}^{2})\mathbbm{1}_{\{\tau > s\}}\big] \leq \big[\null_{k}\eta_{s}\big]_{\frac{1}{1-\alpha_{k}}}\big(1-\alpha_{k} + \alpha_{k}\rho_{k}\big(E\big[|Y_{s}^{\tau}\big|_{V_{s}^{\tau}}^{2}\big]\big)^{\frac{1}{\alpha_{k}}}\big)
\end{equation}
for any $s\in [t_{0},t]$ and every $k\in\{1,\dots,l\}$. So, if $\Phi_{\rho_{0}}(\infty) = \infty$, then an application of Bihari's inequality to~\eqref{eq:abstract second moment auxiliary estimate 1} and Fatou's lemma show that $E[|Y|_{V}^{2}]$ is locally bounded.

In this case, we may take $\tau = \infty$ in~\eqref{eq:abstract second moment auxiliary estimate 2} to see that $\sum_{k=1}^{l}E[\null_{k}\eta\rho_{k}(|Y|_{V}^{2})]$ is locally integrable. For this reason, we merely suppose that the latter holds. Then
\begin{equation*}
E\big[|Y_{t}|_{V_{t}}^{2}\big] \leq E\big[|Y_{t_{0}}|_{V_{t_{0}}}^{2}\big] + \int_{t_{0}}^{t}(\hat{\kappa} + \hat{\delta})(s) + \sum_{k=1}^{l}\alpha_{k}\big[\null_{k}\eta_{s}\big]_{\frac{1}{1-\alpha_{k}}}\rho_{k}\big(E\big[|Y_{s}|_{V_{s}}^{2}\big]\big)^{\frac{1}{\alpha_{k}}}\,ds
\end{equation*}
for $\hat{\delta} :=\sum_{k=1}^{l}(1-\alpha_{k})[\null_{k}\eta]_{1/(1-\alpha_{k})}$, by~\eqref{eq:abstract second moment auxiliary estimate 1} and Fatou's lemma. Thus, as in the previous case, the function $E[|Y|_{V}^{2}]$ is locally bounded. Lastly, since Young's inequality gives
\begin{equation*}
E[\null_{k}\lambda]\varrho_{k}\circ\theta_{k}^{2} \leq E\big[\null_{k}\lambda\big](1 - \beta_{k} + \beta_{k}\varrho_{k}\big(c_{k}^{2}E\big[|Y|_{V}^{2}\big]\big)^{\frac{1}{\beta_{k}}}\big)
\end{equation*}
a.e.~on $[t_{0},t]$ for all $k\in\{1,\dots,l\}$, the asserted estimate follows from Bihari's inequality. 
\end{proof}

Next, we seek to give an explicit $L^{p}$-estimate for fixed $p\in [2,\infty)$ and impose an abstract mixed power condition on $Y$ involving the constant $c_{p} = (p-1)/2$:
\begin{enumerate}[label=(A.\arabic*), ref=A.\arabic*, leftmargin=\widthof{(A.2)} + \labelsep]
\setcounter{enumi}{1}
\item\label{con:assumption 2} There are $\alpha,\beta\in [0,2]^{l}$ with $\alpha + \beta\in [0,2]^{l}$, two measurable maps $\zeta,\theta:[t_{0},\infty)\rightarrow\R_{+}^{l}$, $c\in\R_{+}^{l}$ and for any $k\in\{1,\dots,l\}$ there are
\begin{equation*}
 \null_{k}\eta\in\mathscr{S}_{loc}^{\frac{p}{2-\alpha_{k}}}(\R)
\end{equation*}
such that $(VY)'(\dot{V}Y + V\hat{\B}) + c_{p}|\hat{\Sigma}|_{V}^{2} \leq \sum_{k=1}^{l}\zeta_{k}\,\null_{k}\eta|Y|_{V}^{\alpha_{k}}\theta_{k}^{\beta_{k}}$ a.e.~on $[t_{0},\infty)$ a.s. Further, for any $k\in\{1,\dots,l\}$ we have $\zeta_{k}=1$, if $\alpha_{k} + \beta_{k} = 2$, the function
\begin{equation*}
\zeta_{k}^{\frac{p}{2-\alpha_{k}-\beta_{k}}}[\null_{k}\eta]_{\frac{p}{2-\alpha_{k}}}
\end{equation*}
is locally integrable and from $\beta_{k} > 0$ it follows that $\theta_{k}(s) \leq c_{k}E[|Y_{s}|_{V_{s}}^{p}]^{1/p}$ for a.e.~$s\in [t_{0},\infty)$ with $\zeta_{k}(s)[\null_{k}\eta_{s}]_{p/(2-\alpha_{k})} > 0$.
\end{enumerate}

\begin{Remark}
The preceding condition implies that the product $\zeta_{k}[\null_{k}\eta]_{p/(2-\alpha_{k})}$ is locally integrable for all $k\in\{1,\dots,l\}$, since Young's inequality entails that
\begin{equation*}
p\zeta_{k} \leq p-2+\alpha_{k}+\beta_{k} + (2-\alpha_{k}-\beta_{k})\zeta_{k}^{\frac{p}{2-\alpha_{k}-\beta_{k}}}.
\end{equation*} 
\end{Remark}

If~\eqref{con:assumption 2} is satisfied, we define $\gamma_{p,c}\in\mathscr{L}_{loc}^{1}(\R)$ and $\delta_{p,c}\in\mathscr{L}_{loc}^{1}(\R_{+})$ by
\begin{equation}\label{eq:stability coefficient}
\gamma_{p,c} := \sum_{k=1}^{l}(p-2 +\alpha_{k} + \beta_{k})c_{k}^{\beta_{k}}\big[\null_{k}\eta\big]_{\frac{p}{2-\alpha_{k}}}
\end{equation}
and
\begin{equation}\label{eq:stability drift coefficient}
\delta_{p,c} := \sum_{k=1}^{l}(2-\alpha_{k} - \beta_{k})c_{k}^{\beta_{k}}\zeta_{k}^{\frac{p}{2-\alpha_{k}-\beta_{k}}}\big[\null_{k}\eta\big]_{\frac{p}{2-\alpha_{k}}},
\end{equation}
which yield an explicit $p$-th moment estimate. As a direct consequence, we can provide sufficient conditions for boundedness and convergence in $L^{p}(\Omega,\mathscr{F},P)$.

\begin{Theorem}\label{th:stability moment estimate}
Let~\eqref{con:assumption 2} hold, $E[|Y_{t_{0}}|_{V_{t_{0}}}^{p}] < \infty$ and $\sum_{k=1,\,\beta_{k} > 0}^{l}\zeta_{k}[\null_{k}\eta]_{p/(2-\alpha_{k})}\theta_{k}^{\beta_{k}}$ be locally integrable. Then
\begin{equation}\label{eq:general p-th moment stability estimate}
E\big[|Y_{t}|_{V_{t}}^{p}\big] \leq e^{\int_{t_{0}}^{t}\gamma_{p,c}(s)\,ds}E\big[|Y_{t_{0}}|_{V_{t_{0}}}^{p}\big] + \int_{t_{0}}^{t}e^{\int_{s}^{t}\gamma_{p,c}(\tilde{s})\,d\tilde{s}}\delta_{p,c}(s)\,ds
\end{equation}
for all $t\in [t_{0},\infty)$. In particular, if $\gamma_{p,c}^{+}$ and $\delta_{p,c}$ are integrable, then $E[|Y|_{V}^{p}]$ is bounded. If additionally $\int_{t_{0}}^{\infty}\gamma_{p,c}^{-}(s)\,ds = \infty$, then $\lim_{t\uparrow\infty} E[|Y_{t}|_{V_{t}}^{p}] = 0$.
\end{Theorem}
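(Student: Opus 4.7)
The plan is to derive~\eqref{eq:general p-th moment stability estimate} in three steps: apply Lemma~\ref{le:auxiliary p-th moment estimate} to the process stopped at $\tau_{n}:=\inf\{t\geq t_{0}:|Y_{t}|_{V_{t}}\geq n\}$, estimate the resulting integrand via inequality~\eqref{eq:essential inequality}, and close with Gr\oe nwall. For the first step, condition~\eqref{con:assumption 2} provides the upper bound
\[
Z_{s} := \sum_{k=1}^{l}\zeta_{k}(s)\,\null_{k}\eta_{s}\,|Y_{s}|_{V_{s}}^{\alpha_{k}}\theta_{k}(s)^{\beta_{k}} - c_{p}|\hat{\Sigma}_{s}|_{V_{s}}^{2}
\]
on $(V_{s}Y_{s})'(\dot{V}_{s}Y_{s} + V_{s}\hat{\B}_{s}) + c_{p}|\hat{\Sigma}_{s}|_{V_{s}}^{2}$, so with $u\equiv 1$ and the abbreviations $f_{n}(t):=E[|Y_{t}^{\tau_{n}}|_{V_{t}^{\tau_{n}}}^{p}]$ and $g(s):=E[|Y_{s}|_{V_{s}}^{p}]$, Lemma~\ref{le:auxiliary p-th moment estimate} yields
\[
f_{n}(t) \leq f_{n}(t_{0}) + p\sum_{k=1}^{l}E\bigg[\int_{t_{0}}^{t\wedge\tau_{n}}\zeta_{k}\,\null_{k}\eta\,|Y|_{V}^{p-(2-\alpha_{k})}\theta_{k}^{\beta_{k}}\,ds\bigg].
\]
The integrability prerequisite of the lemma holds for each fixed $n$ because $|Y|_{V}\leq n$ on $[t_{0},\tau_{n})$.

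The heart of the argument is the pointwise bound on each summand: replacing $\null_{k}\eta$ by $\null_{k}\eta^{+}$ and applying~\eqref{eq:essential inequality} with $X=\null_{k}\eta_{s}^{+}$, $Y=|Y_{s}|_{V_{s}}$, $\gamma=2-\alpha_{k}$, $\beta=\beta_{k}$ and $x=\zeta_{k}(s)$ produces
\[
p\zeta_{k}\,E[\null_{k}\eta_{s}^{+}|Y_{s}|_{V_{s}}^{p-(2-\alpha_{k})}]\,g(s)^{\beta_{k}/p} \leq [\null_{k}\eta_{s}]_{p/(2-\alpha_{k})}\big((2-\alpha_{k}-\beta_{k})\zeta_{k}^{p/(2-\alpha_{k}-\beta_{k})} + (p-2+\alpha_{k}+\beta_{k})g(s)\big).
\]
Multiplying through by $c_{k}^{\beta_{k}}$ and invoking $\theta_{k}^{\beta_{k}}\leq c_{k}^{\beta_{k}}g^{\beta_{k}/p}$ (automatic for $\beta_{k}=0$ by the convention $\theta_{k}^{0}=1$; for $\beta_{k}>0$ valid on the set where $\zeta_{k}[\null_{k}\eta_{s}]_{p/(2-\alpha_{k})}>0$, off which the summand vanishes) bounds the LHS from below by the required expression, while the boundary case $\alpha_{k}+\beta_{k}=2$, where $\zeta_{k}=1$ by hypothesis, is handled by the convention $1^{\infty}=1$. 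Summing over $k$ and integrating delivers
\[
f_{n}(t) \leq f_{n}(t_{0}) + \int_{t_{0}}^{t}\big(\delta_{p,c}(s) + \gamma_{p,c}(s)g(s)\big)\,ds.
\]

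Sending $n\to\infty$, using $\tau_{n}\uparrow\infty$ a.s., continuity of $Y$ and $V$, Fatou's lemma and $f_{n}(t_{0})=g(t_{0})$ gives the same inequality with $g$ on the left. The main obstacle is to exclude finite-time blow-up of $g$ so that classical Gr\oe nwall applies: since $g(t_{0})<\infty$, $\gamma_{p,c},\delta_{p,c}\in\mathscr{L}_{loc}^{1}(\R_{+})$ and the dependence on $g$ is linear, a standard bootstrap argument (if $T$ were a minimal blow-up time, Gr\oe nwall on $[t_{0},T-\varepsilon]$ would yield a uniform finite bound, contradicting $g(T)=\infty$) shows $g$ is locally bounded. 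Gr\oe nwall's inequality then produces~\eqref{eq:general p-th moment stability estimate}. The remaining two assertions follow immediately: integrability of $\gamma_{p,c}^{+}$ and $\delta_{p,c}$ renders the right-hand side of~\eqref{eq:general p-th moment stability estimate} uniformly bounded in $t$, and under the additional hypothesis $\int_{t_{0}}^{\infty}\gamma_{p,c}^{-}(s)\,ds=\infty$ the factor $e^{\int_{t_{0}}^{t}\gamma_{p,c}}$ tends to $0$, whereupon splitting the convolution integral at a large threshold together with $\delta_{p,c}\in L^{1}$ and dominated convergence give $\lim_{t\uparrow\infty}E[|Y_{t}|_{V_{t}}^{p}]=0$.
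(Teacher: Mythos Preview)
Your argument has a genuine gap at the local-boundedness step. After Fatou you arrive at $g(t)\leq g(t_{0})+\int_{t_{0}}^{t}(\delta_{p,c}+\gamma_{p,c}\,g)\,ds$, but this inequality alone does not force $g$ to be locally bounded: if $g$ fails to be locally integrable the right-hand side is $+\infty$ and the inequality is vacuous, so your ``standard bootstrap'' is circular (applying Gronwall on $[t_{0},T-\varepsilon]$ already presupposes local boundedness there). A telling symptom is that you never use the hypothesis that $\sum_{k:\beta_{k}>0}\zeta_{k}[\null_{k}\eta]_{p/(2-\alpha_{k})}\theta_{k}^{\beta_{k}}$ is locally integrable. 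The paper closes the gap by a two-pass argument: in a first pass one applies~\eqref{eq:essential inequality} with $\beta=0$ and keeps $\theta_{k}^{\beta_{k}}$ as a coefficient (rather than estimating it by $c_{k}^{\beta_{k}}g^{\beta_{k}/p}$), obtaining $pE[|Y_{s}|_{V_{s}}^{p-2}\hat{Z}_{s}\mathbbm{1}_{\{\tau>s\}}]\leq\hat{\delta}_{p}(s)+\hat{\gamma}_{p}(s)E[|Y_{s}|_{V_{s}}^{p}\mathbbm{1}_{\{\tau>s\}}]$ with $\hat{\gamma}_{p},\hat{\delta}_{p}\in\mathscr{L}_{loc}^{1}$ precisely by that hypothesis; choosing $u=\exp(-\int\hat{\gamma}_{p})$ in Lemma~\ref{le:auxiliary p-th moment estimate} then yields, after Fatou, a closed bound on $g$ not involving $g$ on the right, hence local boundedness. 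Only in the second pass, now with $\tau=\infty$ and $g$ locally bounded, does one substitute $\theta_{k}\leq c_{k}g^{1/p}$ to obtain the sharp coefficients $\gamma_{p,c},\delta_{p,c}$ and conclude~\eqref{eq:general p-th moment stability estimate}.

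A smaller point: the blanket replacement $\null_{k}\eta\to\null_{k}\eta^{+}$ is harmless when $\alpha_{k}<2$ (since then $[\null_{k}\eta^{+}]_{p/(2-\alpha_{k})}=[\null_{k}\eta]_{p/(2-\alpha_{k})}$ by definition), but for $\alpha_{k}=2$ it gives $[\null_{k}\eta^{+}]_{\infty}=([\null_{k}\eta]_{\infty})^{+}$, discarding exactly the negative contribution that can make $\gamma_{p,c}<0$ and drive stability. For such $k$ one should instead use $E[\null_{k}\eta\,|Y|_{V}^{p}]\leq[\null_{k}\eta]_{\infty}\,E[|Y|_{V}^{p}]$ directly.
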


\begin{proof}
According to Lemma~\ref{le:auxiliary p-th moment estimate} and~\eqref{con:assumption 2}, the process $\hat{Z}:= (VY)'(\dot{V}Y + V\hat{\B}) + c_{p}|\hat{\Sigma}|_{V}^{2}$
and any stopping time $\tau$ for which $E[|Y^{\tau}|_{V^{\tau}}^{p}]$ is locally bounded satisfy
\begin{equation}\label{eq:stability moment inequality}
E\big[u(t\wedge\tau)|Y_{t}^{\tau}|_{V_{t}^{\tau}}^{p}\big] \leq E\big[|Y_{t_{0}}|_{V_{t_{0}}}^{p}\big] + \int_{t_{0}}^{t}E\big[|Y_{s}|_{V_{s}}^{p-2}\big(\dot{u}(s)|Y_{s}|_{V_{s}}^{2} + u(s)p\hat{Z}_{s})\mathbbm{1}_{\{\tau > s\}}\big]\,ds
\end{equation}
for all $t\in [t_{0},\infty)$ and each locally absolutely continuous function $u:[t_{0},\infty)\rightarrow\R_{+}$. Thereby, we immediately infer from~\eqref{eq:essential inequality} that 
\begin{equation*}
p E\big[|Y_{s}|_{V_{s}}^{p-2}\hat{Z}_{s}\mathbbm{1}_{\{\tau > s\}}\big] \leq  \hat{\delta}_{p}(s) + \hat{\gamma}_{p}(s)E\big[|Y_{s}|_{V_{s}}^{p}\mathbbm{1}_{\{\tau > s\}}\big]
\end{equation*}
for a.e.~$s\in [t_{0},\infty)$ with  $\hat{\gamma}_{p}\in\mathscr{L}_{loc}^{1}(\R)$ and $\hat{\delta}_{p}\in\mathscr{L}_{loc}^{1}(\R_{+})$ given by
\begin{equation*}
\hat{\gamma}_{p}:=\sum_{k=1}^{l}(p-2+\alpha_{k})\zeta_{k}[\null_{k}\eta]_{\frac{p}{2-\alpha_{k}}}\theta_{k}^{\beta_{k}}\quad\text{and}\quad\hat{\delta}_{p}:=\sum_{k=1}^{l}(2-\alpha_{k})\zeta_{k}\big[\null_{k}\eta\big]_{\frac{p}{2-\alpha_{k}}}\theta_{k}^{\beta_{k}}.
\end{equation*}
Thus, if we choose the function $u(t) = \exp(-\int_{t_{0}}^{t}\hat{\gamma}_{p}(s)\,ds)$ for all $t\in [t_{0},\infty)$ and the stopping time $\tau=\inf\{t\in [t_{0},\infty)\,|\, |Y_{t}|_{V_{t}}\geq n\}$ in~\eqref{eq:stability moment inequality} for any $n\in\N$, then
\begin{equation*}
e^{-\int_{t_{0}}^{t}\hat{\gamma}_{p}(s)\,ds}E\big[|Y_{t}|_{V_{t}}^{p}\big] \leq E\big[|Y_{t_{0}}|_{V_{t_{0}}}^{p}\big] + \int_{t_{0}}^{t}e^{-\int_{t_{0}}^{s}\hat{\gamma}_{p}(s_{0})\,ds_{0}}\hat{\delta}_{p}(s)\,ds
\end{equation*}
for each $t\in [t_{0},\infty)$, by an application of Fatou's lemma. In particular, $E[|Y|_{V}^{p}]$ is locally bounded. Thus, a second estimation by means of~\eqref{eq:essential inequality} shows that
\begin{equation*}
pE\big[|Y_{s}|_{V_{s}}^{p-2}\hat{Z}_{s}\big] \leq \delta_{p,c}(s) + \gamma_{p,c}(s)E\big[|Y_{s}|_{V_{s}}^{p}\big]
\end{equation*}
for a.e.~$s\in [t_{0},\infty)$, since $[\null_{k}\eta]_{p/(2-\alpha_{k})}\geq 0$ whenever $\beta_{k} > 0$ for all $k\in\{1,\dots,l\}$. Now we take $u(t) = \exp(-\int_{t_{0}}^{t}\gamma_{p,c}(s)\,ds)$ for any $t\in [t_{0},\infty)$ and $\tau = \infty$ in~\eqref{eq:stability moment inequality} to obtain the asserted estimate after dividing by $u(t)$.
\end{proof}

\begin{Remark}\label{re:moment convergence}
Assume that $\delta_{p,c}=0$ a.e., which is the case whenever $\alpha_{k} + \beta_{k} = 2$ for all $k\in\{1,\dots,l\}$. If $\gamma_{p,c}^{+}$ is integrable, then
\begin{equation*}
\sup_{t\in [t_{0},\infty)} e^{\int_{t_{0}}^{t}\gamma_{p,c}^{-}(s)\,ds}E\big[|Y_{t}|_{V_{t}}^{p}\big] < \infty,
\end{equation*}
because $\beta \gamma_{p,c}^{-} + \gamma_{p,c} = \gamma_{p,c}^{+} - (1-\beta)\gamma_{p,c}^{-}$ for each $\beta\in [0,1]$. Thus, if in addition $\gamma_{p,c}^{-}$ fails to be integrable, then
\begin{equation*}
\lim_{t\uparrow\infty}e^{\alpha\int_{t_{0}}^{t}\gamma_{p,c}^{-}(s)\,ds}E\big[|Y_{t}|_{V_{t}}^{p}\big] = 0\quad\text{for any $\alpha\in [0,1)$}.
\end{equation*}
This describes the rate of convergence.
\end{Remark}

\subsection{Moment estimates in a supremum seminorm}\label{se:4.3}

This section provides a general method to obtain $L^{pq}$-moment estimates in a supremum seminorm for $p,q\in [2,\infty)$ under the following abstract mixed power condition on $Y$, which implies~\eqref{con:assumption 2} when $p$ is replaced by $pq$ and $\zeta_{k}=1$ for all $k\in\{1,\dots,l\}$ holds there:
\begin{enumerate}[label=(A.\arabic*), ref=A.\arabic*, leftmargin=\widthof{(A.3)} + \labelsep]
\setcounter{enumi}{2}
\item\label{con:assumption 3} There are $\alpha,\hat{\alpha},\beta,\hat{\beta}\in [0,2]^{l}$ with $\alpha+\beta,\hat{\alpha}+\hat{\beta}\in [0,2]^{l}$, an $\R_{+}^{l}$-valued measurable map $\theta$ on $[t_{0},\infty)$, $c\in\R_{+}^{l}$ and for each $k\in\{1,\dots,l\}$ there exist
\begin{equation*}
\null_{k}\eta\in\mathscr{S}_{loc}^{\frac{pq}{2-\alpha_{k}}}(\R)\quad\text{and}\quad\null_{k}\hat{\eta}\in\mathscr{S}(\R_{+})
\end{equation*}
such that $(VY)'(\dot{V}Y + V\hat{\B}) + c_{pq}|\hat{\Sigma}|_{V}^{2} \leq \sum_{k=1}^{l}\null_{k}\eta |Y|_{V}^{\alpha_{k}}\theta^{\beta_{k}}$ and
\begin{equation*}
|\hat{\Sigma}|_{V}^{2} \leq \sum_{k=1}^{l}\null_{k}\hat{\eta}|Y|_{V}^{\hat{\alpha}_{k}}\theta_{k}^{\hat{\beta}_{k}}\quad\text{a.e.~on $[t_{0},\infty)$ a.s.}
\end{equation*}
Every $k\in\{1,\dots,l\}$ satisfies $\alpha_{k} = 2$ if and only if $k = l$, and it holds that $\theta_{k}(s)$ $\leq c_{k}E[|Y_{s}|_{V_{s}}^{pq}]^{1/(pq)}$ for a.e.~$s\in [t_{0},\infty)$ with
\begin{equation*}
[\null_{k}\eta_{s}]_{\frac{pq}{2-\alpha_{k}}}\mathbbm{1}_{(0,2]}(\beta_{k}) > 0\quad\text{or}\quad [\null_{k}\hat{\eta}_{s}]_{\frac{pq}{2-\hat{\alpha}_{k}}}\mathbbm{1}_{(0,2]}(\hat{\beta}_{k}) > 0.
\end{equation*}
Further, for any $j\in\{1,\dots,l\}$ with $\alpha_{j} > 0$ and each $k\in\{1,\dots,l\}$ there are $\eta_{j,1},\hat{\eta}_{k,1}\in\mathscr{L}_{loc}^{1}(\R)$ with $\hat{\eta}_{k,1}\geq 0$ and $\null_{j,2}\eta,\null_{k,2}\hat{\eta}\in\mathscr{S}(\R_{+})$ satisfying
\begin{equation}\label{eq:additional condition}
\null_{j}\eta = \eta_{j,1}\,\null_{j,2}\eta\quad\text{and}\quad\null_{k}\hat{\eta} = \hat{\eta}_{k,1}\,\null_{k,2}\hat{\eta}
\end{equation}
such that $\eta_{j,1}^{+}[\null_{j,2}\eta]_{pq/(2-\alpha_{k})}^{q}$ and $\hat{\eta}_{k,1}[\null_{k,2}\hat{\eta}]_{pq/(2-\hat{\alpha}_{k})}^{q/2}$ are locally integrable.
\end{enumerate}

\begin{Remark}
If there are $\zeta\in\mathscr{L}_{loc}^{1}(\R^{l})$ and $\hat{\zeta}\in\mathscr{L}_{loc}^{1}(\R_{+}^{l})$ such that $\zeta_{k} = \null_{k}\eta$ and $\hat{\zeta}_{k} = \null_{k}\hat{\eta}$ for all $k\in\{1,\dots,l\}$, then the condition~\eqref{eq:additional condition} holds automatically.
\end{Remark}

Given~\eqref{con:assumption 3} is satisfied, we introduce $\null_{p}\alpha,\null_{p}\hat{\alpha}\in [0,1]^{l}$ coordinatewise by
\begin{equation*}
\null_{p}\alpha_{k} := \frac{p - 2 + \alpha_{k}}{p}\quad\text{and}\quad\null_{p}\hat{\alpha}_{k} := \frac{2p - 2 + \hat{\alpha}_{k}}{2p}.
\end{equation*}
For any $j\in\{1,\dots,l\}$ with $\alpha_{j} > 0$ and each $k\in\{1,\dots,l\}$ let the two $\R_{+}$-valued continuous functions $c_{j,q}$ and $\hat{c}_{k,q}$ on the set of all $(t_{1},t)\in [t_{0},\infty)^{2}$ with $t_{1}\leq t$ be given by
\begin{equation*}
c_{j,q}(t_{1},t) := \bigg(\int_{t_{1}}^{t}\eta_{j,1}^{+}(s)\,ds\bigg)^{1-\frac{1}{q}}\quad\text{and}\quad \hat{c}_{k,q}(t_{1},t):=\bigg(\int_{t_{1}}^{t}\hat{\eta}_{k,1}(s)\,ds\bigg)^{\frac{1}{2}-\frac{1}{q}}.
\end{equation*}
Further, we define three $[0,\infty]$-valued measurable functions $f_{p,q}$, $g_{p,q}$ and $h_{p,q}$ on the set of all $(t_{1},t)\in [t_{0},\infty)^{2}$ with $t_{1}\leq t$ by
\begin{align*}
f_{p,q}(t_{1},t) &:= p E\bigg[\bigg(\int_{t_{1}}^{t}|Y_{s}|_{V_{s}}^{p-2}\sum_{k=1,\,\alpha_{k}=0}^{l}\null_{k}\eta_{s}^{+}\theta_{k}^{\beta_{k}}(s)\,ds\bigg)^{q}\bigg]^{\frac{1}{q}},\\
g_{p,q}(t_{1},t) &:= f_{p,q}(t_{1},t) + p\sum_{k=1,\,\alpha_{k} > 0}^{l}c_{k,q}(t_{1},t)\bigg(\int_{t_{1}}^{t}\eta_{k,1}^{+}(s)E\big[\null_{k,2}\eta_{s}^{q}|Y_{t_{1}}|_{V_{t_{1}}}^{\null_{p}\alpha_{k}pq}\big]\theta_{k}(s)^{\beta_{k}q}\,ds\bigg)^{\frac{1}{q}}\\
&\quad + p\sum_{k=1}^{l}\hat{c}_{k,q}(t_{1},t)\bigg(\overline{w}_{q}\int_{t_{1}}^{t}\hat{\eta}_{k,1}(s)E\big[\null_{k,2}\hat{\eta}_{s}^{\frac{q}{2}}|Y_{t_{1}}|_{V_{t_{1}}}^{\null_{p}\hat{\alpha}_{k}pq}\big]\theta_{k}(s)^{\hat{\beta}_{k}\frac{q}{2}}\,ds\bigg)^{\frac{1}{q}}
\end{align*}
and
\begin{align*}
h_{p,q}(t_{1},t) &:= p^{q} \sum_{k=1,\,\alpha_{k} > 0}^{l}c_{k,q}(t_{1},t)^{q}\int_{t_{1}}^{t}\eta_{k,1}^{+}(s)\big[\null_{k,2}\eta_{s}\big]_{\frac{pq}{2-\alpha_{k}}}^{q}\theta_{k}(s)^{\beta_{k}q}\,ds\\
&\quad + p^{q} \sum_{k=1}^{l}\hat{c}_{k,q}(t_{1},t)^{q}\overline{w}_{q}\int_{t_{1}}^{t}\hat{\eta}_{k,1}(s)\big[\null_{k,2}\hat{\eta}_{s}\big]_{\frac{pq}{2-\hat{\alpha}_{k}}}^{\frac{q}{2}}\theta_{k}(s)^{\hat{\beta}_{k}\frac{q}{2}}\,ds,
\end{align*}
which are finite if $E[|Y|_{V}^{pq}]$ is locally bounded. This fact follows from H\oe lder's inequality, applied to any two $\R_{+}$-valued random variables $\zeta$ and $X$ as follows:
\begin{equation}\label{eq:application of Hoelder's inequality}
E\big[\zeta^{q}X^{\null_{p}\alpha_{k}pq}\big] \leq \big[\zeta\big]_{\frac{pq}{2-\alpha_{k}}}^{q} E\big[X^{pq}\big]^{\null_{p}\alpha_{k}}\quad\text{and}\quad E\big[\zeta^{\frac{q}{2}}X^{\null_{p}\hat{\alpha}_{k}pq}\big] \leq \big[\zeta\big]_{\frac{pq}{2-\hat{\alpha}_{k}}}^{\frac{q}{2}} E\big[X^{pq}\big]^{\null_{p}\hat{\alpha}_{k}}
\end{equation}
for each $k\in\{1,\dots,l\}$. Finally, let us introduce the numbers $\underline{\alpha}:=\min_{k\in\{1,\dots,l\}} (\null_{p}\alpha_{k}\wedge\null_{p}\hat{\alpha}_{k})$ and $\overline{\alpha}:=\max_{k\in\{1,\dots,l\}}(\null_{p}\alpha_{k}\vee\null_{p}\hat{\alpha}_{k})$.

\begin{Proposition}\label{pr:general pq-th moment estimate}
Let~\eqref{con:assumption 3} be valid, $\sum_{k=1,\,\beta_{k} > 0}^{l}[\null_{k}\eta]_{pq/(2-\alpha_{k})}\theta_{k}^{\beta_{k}}$ be locally integrable and $\rho_{0}\in C(\R_{+})$ be given by $\rho_{0}(v):=v^{\underline{\alpha}}\mathbbm{1}_{(0,1]}(v) + v^{\overline{\alpha}}_{(1,\infty)}(v)$. If
\begin{equation}\label{eq:finite expectations}
E\big[|Y_{t_{0}}|_{V_{t_{0}}}^{pq}\big]\quad\text{and}\quad E\bigg[\bigg(\int_{t_{0}}^{t}|Y_{s}|_{V_{s}}^{p-2}\sum_{k=1,\,\alpha_{k} = 0}^{l}\null_{k}\eta_{s}^{+}\,ds\bigg)^{q}\bigg]
\end{equation}
are finite, then $\sup_{s\in [t_{0},t]} |Y_{s}|_{V_{s}}$ is $pq$-fold integrable and
\begin{align*}
E\bigg[\bigg(\sup_{s\in [t_{1},t]} |Y_{s}|_{V_{s}}^{p} - |Y_{t_{1}}|_{V_{t_{1}}}^{p}\bigg)^{q}\bigg] \leq \Psi_{\rho_{0}}\bigg((2l+1)^{q-1}g_{p,q}(t_{1},t)^{q},(2l+1)^{q-1}h_{p,q}(t_{1},t)\bigg)
\end{align*}
for all $t_{1},t\in [t_{0},\infty)$ with $t_{1}\leq t$. In particular, $E[|Y|_{V}^{pq}]$ is continuous.
\end{Proposition}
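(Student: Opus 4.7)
The plan is to derive the claim from Proposition~\ref{pr:auxiliary p-th moment estimate in supremum seminorm} by reducing it to a scalar integral inequality of Bihari type. First, I would apply Proposition~\ref{pr:auxiliary p-th moment estimate in supremum seminorm} with $u\equiv 1$, with the localising sequence $\tau_{n}:=\inf\{t\in [t_{0},\infty):|Y_{t}|_{V_{t}}\geq n\}$, and with $t_{1}$ playing the role of $t_{0}$. Because $c_{pq}\geq c_{p}$, condition~\eqref{con:assumption 3} yields $(V_{s}Y_{s})'(\dot{V}_{s}Y_{s}+V_{s}\hat{\B}_{s})+c_{p}|\hat{\Sigma}_{s}|_{V_{s}}^{2}\leq\sum_{k=1}^{l}\null_{k}\eta_{s}|Y_{s}|_{V_{s}}^{\alpha_{k}}\theta_{k}(s)^{\beta_{k}}$, while the independent bound $|\hat{\Sigma}|_{V}^{2}\leq\sum_{k=1}^{l}\null_{k}\hat{\eta}|Y|_{V}^{\hat{\alpha}_{k}}\theta_{k}^{\hat{\beta}_{k}}$ controls the Burkholder--Davis--Gundy term.

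Next, I would split each sum $\sum_{k=1}^{l}$ according to whether $\alpha_{k}=0$ or $\alpha_{k}>0$ (and analogously for $\hat{\alpha}_{k}$): the $\alpha_{k}=0$ drift indices together produce exactly $f_{p,q}(t_{1},t)$, while the remaining drift and all the BDG indices are treated via the factorisations $\null_{k}\eta=\eta_{k,1}\null_{k,2}\eta$ and $\null_{k}\hat{\eta}=\hat{\eta}_{k,1}\null_{k,2}\hat{\eta}$ from~\eqref{eq:additional condition}. An application of H\oe lder's inequality in time (with conjugate exponents $(q/(q-1),q)$ for the drift integrals and $(q/(q-2),q/2)$ for the BDG integrals) extracts precisely the deterministic prefactors $c_{k,q}(t_{1},t)$ and $\hat{c}_{k,q}(t_{1},t)$ and leaves integrands of the form $\eta_{k,1}^{+}E[\null_{k,2}\eta_{s}^{q}|Y_{s}|_{V_{s}}^{pq\,\null_{p}\alpha_{k}}]\theta_{k}^{\beta_{k}q}$ and $\hat{\eta}_{k,1}E[\null_{k,2}\hat{\eta}_{s}^{q/2}|Y_{s}|_{V_{s}}^{pq\,\null_{p}\hat{\alpha}_{k}}]\theta_{k}^{\hat{\beta}_{k}q/2}$.

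Writing $M_{t_{1},s}:=\sup_{\tilde{s}\in[t_{1},s\wedge\tau_{n}]}|Y_{\tilde{s}}|_{V_{\tilde{s}}}^{p}-|Y_{t_{1}}|_{V_{t_{1}}}^{p}$ and $y(s):=E[M_{t_{1},s}^{q}]$, I would then use the monotonicity bound $|Y_{s}|_{V_{s}}^{p}\leq|Y_{t_{1}}|_{V_{t_{1}}}^{p}+M_{t_{1},s}$ together with~\eqref{eq:application of Hoelder's inequality}, applied once with $X=|Y_{t_{1}}|_{V_{t_{1}}}$ to produce the terms of $g_{p,q}(t_{1},t)$, and once with $X=M_{t_{1},s}^{1/p}$ to produce terms of the form $[\null_{k,2}\eta_{s}]_{pq/(2-\alpha_{k})}^{q}y(s)^{\null_{p}\alpha_{k}}$ and $[\null_{k,2}\hat{\eta}_{s}]_{pq/(2-\hat{\alpha}_{k})}^{q/2}y(s)^{\null_{p}\hat{\alpha}_{k}}$. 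By the very definition of $\underline{\alpha}$, $\overline{\alpha}$ and $\rho_{0}$, each factor $y(s)^{\null_{p}\alpha_{k}}$ and $y(s)^{\null_{p}\hat{\alpha}_{k}}$ is uniformly dominated by $\rho_{0}(y(s))$. Taking $L^{q}$-norms, combining via Minkowski and the power-mean bound $(\sum_{i=1}^{2l+1}a_{i})^{q}\leq (2l+1)^{q-1}\sum_{i}a_{i}^{q}$ applied to the $1+l_{+}+l\leq 2l+1$ blocks (the $f$-block, the $l_{+}$ drift blocks with $\alpha_{k}>0$, and the $l$ BDG blocks), one arrives at a scalar inequality of Bihari form
\[
y(t)\leq (2l+1)^{q-1}g_{p,q}(t_{1},t)^{q}+(2l+1)^{q-1}\int_{t_{1}}^{t}\lambda(s)\rho_{0}(y(s))\,ds,
\]
with $\int_{t_{1}}^{t}\lambda(s)\,ds=h_{p,q}(t_{1},t)$. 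Applying Bihari's lemma gives the asserted bound in terms of $\Psi_{\rho_{0}}$, and letting $n\uparrow\infty$ via Fatou removes the localisation. Setting $t_{1}=t_{0}$ and invoking the hypothesis~\eqref{eq:finite expectations} yields $pq$-fold integrability of $\sup_{[t_{0},t]}|Y|_{V}$, from which the continuity of $t\mapsto E[|Y_{t}|_{V_{t}}^{pq}]$ follows by dominated convergence and the continuity of $Y$.

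The main obstacle I anticipate is the bookkeeping needed in the third step to ensure that the constant collapses to $(2l+1)^{q-1}$: the split of $|Y_{s}|_{V_{s}}^{p\,\null_{p}\alpha_{k}}$ into an initial part (feeding $g_{p,q}$) and an excess part (feeding the Bihari integrand) must be arranged so that~\eqref{eq:application of Hoelder's inequality} is applied directly to $M_{t_{1},s}^{1/p}$, avoiding spurious $2^{q-1}$-type constants from $(a+b)^{r}\leq 2^{r-1}(a^{r}+b^{r})$ when $r=q\,\null_{p}\alpha_{k}>1$; and the two-regime definition of $\rho_{0}$ must be shown to uniformly dominate all the powers $v^{\null_{p}\alpha_{k}},v^{\null_{p}\hat{\alpha}_{k}}$ simultaneously for both $v\leq 1$ and $v>1$, which is precisely the content of the choice $\rho_{0}(v)=v^{\underline{\alpha}}\mathbbm{1}_{(0,1]}(v)+v^{\overline{\alpha}}\mathbbm{1}_{(1,\infty)}(v)$.
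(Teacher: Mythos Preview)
Your proposal is correct and follows essentially the same route as the paper's proof: start from Proposition~\ref{pr:auxiliary p-th moment estimate in supremum seminorm} with $u\equiv 1$ and the localisation $\tau_{n}$, split the drift sum by $\alpha_{k}=0$ versus $\alpha_{k}>0$, extract the prefactors $c_{k,q}$, $\hat{c}_{k,q}$ via Jensen, use the subadditivity $(a+b)^{\null_{p}\alpha_{k}}\leq a^{\null_{p}\alpha_{k}}+b^{\null_{p}\alpha_{k}}$ (valid since $\null_{p}\alpha_{k},\null_{p}\hat{\alpha}_{k}\leq 1$) together with Minkowski and~\eqref{eq:application of Hoelder's inequality} to separate the $g_{p,q}$-part from the Bihari integrand, and close with Bihari and Fatou. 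The only difference is organisational: the paper first invokes Theorem~\ref{th:stability moment estimate} (which applies because~\eqref{con:assumption 3} entails~\eqref{con:assumption 2} with $pq$ in place of $p$) to know a priori that $E[|Y|_{V}^{pq}]$ is locally bounded, hence that $g_{p,q}(t_{1},t)$ and $h_{p,q}(t_{1},t)$ are finite for \emph{every} $t_{1}$, whereas you bootstrap this from the case $t_{1}=t_{0}$ via the hypothesis~\eqref{eq:finite expectations}. Both arguments work; the paper's ordering is slightly more direct, while yours avoids appealing to the separate moment estimate.
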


\begin{proof}
Under the integrability assertion, dominated convergence yields $\lim_{n\uparrow\infty} E[|Y_{t_{n}}|_{V_{t_{n}}}^{pq}]$ $= E[|Y_{t}|_{V_{t}}^{pq}]$ for each sequence $(t_{n})_{n\in\N}$ in $[t_{0},\infty)$ converging to some $t\in [t_{0},\infty)$. Hence, we merely need to show the first two claims.

As~\eqref{con:assumption 3} implies~\eqref{con:assumption 2} when $p$ is replaced by $pq$ in the latter assumption, we know from Theorem~\ref{th:stability moment estimate} that $E[|Y|_{V}^{pq}]$ is locally bounded. Thus, Proposition~\ref{pr:auxiliary p-th moment estimate in supremum seminorm}, the triangle inequality in the $L^{q}$-norm and Jensen's inequality yield that
\begin{align*}
E\bigg[\bigg(\sup_{s\in [t_{1},t]} &|Y_{s}^{\tau_{n}}|_{V_{s}^{\tau_{n}}}^{p} - |Y_{t_{1}}|_{V_{t_{1}}}^{p}\bigg)^{q}\bigg]^{\frac{1}{q}}\leq f_{p,q}(t_{1},t)\\
&\quad + p\sum_{k=1,\,\alpha_{k} > 0}^{l}c_{k,q}(t_{1},t)\bigg(\int_{t_{1}}^{t}\eta_{k,1}^{+}(s)E\big[\null_{k,2}\eta_{s}^{q}|Y_{s}^{\tau_{n}}|_{V_{s}^{\tau_{n}}}^{\null_{p}\alpha_{k}pq}\big]\theta_{k}(s)^{\beta_{k}q}\,ds\bigg)^{\frac{1}{q}}\\
&\quad + p\sum_{k=1}^{l}\hat{c}_{k,q}(t_{1},t)\bigg(\overline{w}_{q}\int_{t_{1}}^{t}\hat{\eta}_{k,1}(s)E\big[\null_{k,2}\hat{\eta}_{s}^{\frac{q}{2}}|Y_{s}^{\tau_{n}}|_{V_{s}^{\tau_{n}}}^{\null_{p}\hat{\alpha}_{k}pq}\big]\theta_{k}(s)^{\hat{\beta}_{k}\frac{q}{2}}\,ds\bigg)^{\frac{1}{q}}
\end{align*}
for any fixed $t_{1},t\in [t_{0},\infty)$ with $t_{1}\leq t$ and $n\in\N$, where $\tau_{n}:=\inf\{t\in [t_{1},\infty)\,|\,|Y_{t}|_{V_{t}}\geq n\}$. Hence, if $E[|Y_{t_{1}}|_{V_{t_{1}}}^{pq}] < \infty$, then Minkowski's inequality,~\eqref{eq:application of Hoelder's inequality} and Bihari's inequality give the claimed estimate for
\begin{equation*}
E\bigg[\bigg(\sup_{s\in [t_{1},t]} |Y_{s}^{\tau_{n}}|_{V_{s}^{\tau_{n}}}^{p} - |Y_{t_{1}}|_{V_{t_{1}}}^{p}\bigg)^{q}\bigg]^{\frac{1}{q}}.
\end{equation*}
Afterwards, Fatou's lemma implies the asserted bound. Moreover, if we take $t_{1}=t_{0}$, then the $pq$-fold integrability of $|Y_{t_{0}}|_{V_{t_{0}}}$ implies that of $\sup_{s\in [t_{0},t]} |Y_{s}|_{V_{s}}$. For this reason, the proposition is proven.
\end{proof}

\begin{Remark}\label{re:general pq-th moment estimate remark}
The second expectation in~\eqref{eq:finite expectations} is finite if for each $k\in\{1,\dots,l\}$ with $\alpha_{k}=0$ there are $\eta_{k,1}\in\mathscr{L}_{loc}^{1}(\R)$ and $\null_{k,2}\eta\in\mathscr{S}(\R_{+})$ such that $\null_{k}\eta = \eta_{k,1}\,\null_{k,2}\eta$ and $\eta_{k,1}^{+}[\null_{k,2}\eta]_{pq/2}^{q}$ is locally integrable, by the inequalities of Jensen and H\oe lder.
\end{Remark}

If~\eqref{con:assumption 3} is satisfied, then for $\gamma\in\mathscr{L}_{loc}^{1}(\R)$ we define an $[0,\infty]$-valued measurable function $h_{\gamma,p,q}$ on the set of all $(t_{1},t)\in [t_{0},\infty)^{2}$ with $t_{1} \leq t$ via
\begin{align*}
&h_{\gamma,p,q}(t_{1},t) := p E\bigg[\bigg(\int_{t_{1}}^{t}e^{-\int_{t_{1}}^{s}\gamma(s_{0})\,ds_{0}}|Y_{s}|_{V_{s}}^{p-2}\sum_{k=1,\,\alpha_{k}=0}^{l}\null_{k}\eta_{s}^{+}\theta_{k}^{\beta_{k}}(s)\,ds\bigg)^{q}\bigg]^{\frac{1}{q}}\\
& + p\sum_{k=1,\,
\alpha_{k}\in (0,2)}^{l}c_{k,q}(t_{1},t)\bigg(\int_{t_{1}}^{t}\eta_{k,1}^{+}(s)E\big[\null_{k,2}\eta_{s}\big]_{\frac{pq}{2-\alpha_{k}}}^{q}e^{-q\int_{t_{1}}^{s}\gamma(s_{0})\,ds_{0}}E\big[|Y_{s}|_{V_{s}}^{pq}\big]^{\null_{p}\alpha_{k}}\theta(s)^{\beta_{k}q}\,ds\bigg)^{\frac{1}{q}}\\
& + p\sum_{k=1}^{l}\hat{c}_{k,q}(t_{1},t)\bigg(\overline{w}_{q}\int_{t_{0}}^{t}\hat{\eta}_{k,1}(s)\big[\null_{k,2}\hat{\eta}_{s}\big]_{\frac{pq}{2-\hat{\alpha}_{k}}}^{\frac{q}{2}}e^{-q\int_{t_{1}}^{s}\gamma(s_{0})\,ds_{0}}E\big[|Y_{s}|_{V_{s}}^{pq}\big]^{\null_{p}\hat{\alpha}_{k}}\theta_{k}(s)^{\hat{\beta}_{k}\frac{q}{2}}\,ds\bigg)^{\frac{1}{q}},
\end{align*}
then an auxiliary $pq$-th moment stability estimate in a supremum seminorm follows.

\begin{Lemma}\label{le:general pq-th moment stability estimate}
Let~\eqref{con:assumption 3} be valid and $\sum_{k=1,\,\beta_{k} > 0}^{l}[\null_{k}\eta]_{pq/(2-\alpha_{k})}\theta_{k}^{\beta_{k}}$ be locally integrable. If the expectations in~\eqref{eq:finite expectations} are finite, then
\begin{align*}
&E\bigg[\bigg(\sup_{s\in [t_{1},t]} e^{-\int_{t_{1}}^{s}\gamma(s_{0})\,ds_{0}}|Y_{s}|_{V_{s}}^{p} - |Y_{t_{1}}|_{V_{t_{1}}}^{p}\bigg)^{q}\bigg]^{\frac{1}{q}} \leq  h_{\gamma,p,q}(t_{1},t)\\
&\quad + \bigg(\int_{t_{1}}^{t}(p\big[\null_{l}\eta_{s}\big]_{\infty} - \gamma)^{+}(s)\,ds\bigg)^{1 - \frac{1}{q}}\bigg(\int_{t_{1}}^{t}(p\big[\null_{l}\eta_{s}\big]_{\infty} - \gamma)^{+}(s)e^{-q\int_{t_{1}}^{s}\gamma(s_{0})\,ds_{0}}E\big[|Y_{s}|_{V_{s}}^{pq}\big]\,ds\bigg)^{\frac{1}{q}}
\end{align*}
for any $\gamma\in\mathscr{L}_{loc}^{1}(\R)$ and every $t_{1},t\in [t_{0},\infty)$ with $t_{1} \leq t$.
\end{Lemma}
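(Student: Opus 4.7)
The plan is to invoke Proposition~\ref{pr:auxiliary p-th moment estimate in supremum seminorm} on the interval $[t_{1},t]$ with exponents $p$ and $q\vee 2=q$, and with the locally absolutely continuous weight $u(s):=\exp\bigl(-\int_{t_{1}}^{s}\gamma(s_{0})\,ds_{0}\bigr)$, so that $\dot u(s)=-\gamma(s)u(s)$ and $u(t_{1})=1$. As bounding process one takes $Z_{s}:=\sum_{k=1}^{l}\null_{k}\eta_{s}|Y_{s}|_{V_{s}}^{\alpha_{k}}\theta_{k}(s)^{\beta_{k}}-c_{pq}|\hat{\Sigma}_{s}|_{V_{s}}^{2}$, which is admissible by the first bound of~\eqref{con:assumption 3}. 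Since $c_{p}\le c_{pq}$, the resulting $\hat{Z}=Z+c_{p}|\hat{\Sigma}|_{V}^{2}$ is dominated by $\sum_{k=1}^{l}\null_{k}\eta|Y|_{V}^{\alpha_{k}}\theta_{k}^{\beta_{k}}$, and Theorem~\ref{th:stability moment estimate} already ensures the local boundedness of $E[|Y|_{V}^{pq}]$ used throughout.

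To estimate the first summand of Proposition~\ref{pr:auxiliary p-th moment estimate in supremum seminorm}, I would isolate the unique index $k=l$ (where $\alpha_{l}=2$ and thus $\beta_{l}=0$) and write
\[
\dot u|Y|_{V}^{2}+up\hat Z\le u\Big((p\null_{l}\eta-\gamma)|Y|_{V}^{2}+p\sum_{k<l}\null_{k}\eta|Y|_{V}^{\alpha_{k}}\theta_{k}^{\beta_{k}}\Big),
\]
pass to the positive part termwise, use $\null_{l}\eta\le[\null_{l}\eta]_{\infty}$ a.s., multiply by $|Y|_{V}^{p-2}$ and apply Minkowski's inequality in $L^{q}$. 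This splits the contribution into three pieces: a $k=l$ piece weighted by $(p[\null_{l}\eta]_{\infty}-\gamma)^{+}$, a block integrand over the indices $k<l$ with $\alpha_{k}=0$, and individual summands for each $k<l$ with $\alpha_{k}\in(0,2)$. For the $k=l$ piece, H\oe lder's inequality in time with respect to the measure $(p[\null_{l}\eta]_{\infty}-\gamma)^{+}\,ds$, combined with Fubini, produces exactly the second term on the right-hand side of the claim. The $\alpha_{k}=0$ block reproduces the first summand of $h_{\gamma,p,q}(t_{1},t)$ verbatim, and for each $\alpha_{k}\in(0,2)$ the decomposition $\null_{k}\eta=\eta_{k,1}\null_{k,2}\eta$ from~\eqref{con:assumption 3} (giving $\null_{k}\eta^{+}=\eta_{k,1}^{+}\null_{k,2}\eta$) together with H\oe lder in time relative to $\eta_{k,1}^{+}\,ds$ extracts $c_{k,q}(t_{1},t)$, while a subsequent H\oe lder bound in $\omega$ with conjugate exponents $p/(2-\alpha_{k})$ and $1/\null_{p}\alpha_{k}$ turns $E[\null_{k,2}\eta^{q}|Y|_{V}^{\null_{p}\alpha_{k}pq}]$ into $[\null_{k,2}\eta]_{pq/(2-\alpha_{k})}^{q}E[|Y|_{V}^{pq}]^{\null_{p}\alpha_{k}}$.

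The diffusion-supremum contribution $pE[\overline{w}_{q}(\int u^{2}|Y|_{V}^{2p-2}|\hat\Sigma|_{V}^{2}\,ds)^{q/2}]^{1/q}$ (with $q_{0}=q$ since $q\ge 2$) is handled analogously: the second bound of~\eqref{con:assumption 3} dominates $|\hat\Sigma|_{V}^{2}$ by $\sum_{k=1}^{l}\null_{k}\hat\eta|Y|_{V}^{\hat\alpha_{k}}\theta_{k}^{\hat\beta_{k}}$, and Minkowski in $L^{q/2}$ together with $\sqrt{a+b}\le\sqrt a+\sqrt b$ separates this into $l$ summands. For each one, Jensen's inequality in time relative to $\hat\eta_{k,1}\,ds$ pulls out the prefactor $\hat c_{k,q}(t_{1},t)=(\int\hat\eta_{k,1}\,ds)^{1/2-1/q}$, and H\oe lder in $\omega$ with exponents tuned by $\null_{p}\hat\alpha_{k}=(2p-2+\hat\alpha_{k})/(2p)$ yields $E[\null_{k,2}\hat\eta^{q/2}|Y|_{V}^{\null_{p}\hat\alpha_{k}pq}]\le[\null_{k,2}\hat\eta]_{pq/(2-\hat\alpha_{k})}^{q/2}E[|Y|_{V}^{pq}]^{\null_{p}\hat\alpha_{k}}$, recovering the third summand of $h_{\gamma,p,q}$.

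One then truncates by $\tau_{n}:=\inf\{s\ge t_{1}:|Y_{s}|_{V_{s}}\ge n\}$, assembles the above estimates on $[t_{1},t\wedge\tau_{n}]$ and lets $n\to\infty$ by Fatou's lemma on the supremum and monotone convergence on the majorants (each finite thanks to~\eqref{eq:finite expectations} and the local boundedness of $E[|Y|_{V}^{pq}]$). The main technical obstacle is the $\alpha_{l}=2$ summand: the term $-\gamma|Y|_{V}^{2}$ must be combined with $p\null_{l}\eta|Y|_{V}^{2}$ \emph{before} taking the positive part, and only then replaced by the deterministic weight $(p[\null_{l}\eta]_{\infty}-\gamma)^{+}$, so that the resulting integral can be split via H\oe lder between the two factors appearing on the right-hand side of the announced inequality.
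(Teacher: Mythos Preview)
Your proposal is correct and follows essentially the same route as the paper: apply Proposition~\ref{pr:auxiliary p-th moment estimate in supremum seminorm} with the weight $u(s)=e^{-\int_{t_{1}}^{s}\gamma\,ds_{0}}$, combine the $k=l$ contribution (where $\alpha_{l}=2$) with $-\gamma|Y|_{V}^{2}$ before passing to the positive part, and then split via the triangle inequality in $L^{q}$, Jensen's inequality in time, and the H\"older bound~\eqref{eq:application of Hoelder's inequality}. The only cosmetic difference is that the paper, instead of localising by $\tau_{n}$ and invoking Fatou, directly cites Proposition~\ref{pr:general pq-th moment estimate} for the $pq$-fold integrability of $\sup_{s\in[t_{0},t]}|Y_{s}|_{V_{s}}$ and Theorem~\ref{th:stability moment estimate} for the finiteness of $h_{\gamma,p,q}$, which lets one apply Proposition~\ref{pr:auxiliary p-th moment estimate in supremum seminorm} with $\tau=\infty$ at once.
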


\begin{proof}
From Theorem~\ref{th:stability moment estimate} we deduce that  $h_{\gamma,p,q}$ is finite, Proposition~\ref{pr:general pq-th moment estimate} gives the $pq$-fold integrability of $\sup_{s\in [t_{0},t]} |Y_{s}|_{V_{s}}$ and we readily observe that
\begin{equation*}
-\gamma |Y|_{V}^{2} + p\sum_{k=1}^{l}\null_{k}\eta|Y|_{V}^{\alpha_{k}}\theta_{k}^{\beta_{k}} = p\bigg(\sum_{k=1}^{l-1}\null_{k}\eta |Y|_{V}^{\alpha_{k}}\theta_{k}^{\beta_{k}}\bigg) + (p\,\null_{l}\eta - \gamma)|Y|_{V}^{2}.
\end{equation*}
Hence, we may infer the assertion from Proposition~\ref{pr:auxiliary p-th moment estimate in supremum seminorm}, by using the triangle inequality in the $L^{q}$-norm, Jensen's inequality and~\eqref{eq:application of Hoelder's inequality}.
\end{proof}

At last, we introduce a condition that forces the function $\delta_{pq,c}$ in~\eqref{eq:stability drift coefficient} when $p$ is replaced by $pq$ and $\zeta_{k}=1$ for all $k\in\{1,\dots,l\}$ to vanish a.e.~on $[t_{1},\infty)$ for some $t_{1}\in [t_{0},\infty)$.
\begin{enumerate}[label=(A.\arabic*), ref=A.\arabic*, leftmargin=\widthof{(A.4)} + \labelsep]
\setcounter{enumi}{3}
\item\label{con:assumption 4} Assumption~\eqref{con:assumption 3} is satisfied and for any $k\in\{1,\dots,l\}$ with $\alpha_{k}=0$ there are $\eta_{k,1}\in\mathscr{L}_{loc}^{1}(\R)$ and $\null_{k,2}\eta\in\mathscr{S}(\R_{+})$ so that $\eta_{k,1}^{+}[\null_{k,2}\eta]_{pq/2}^{q}$ is locally integrable and
\begin{equation*}
\null_{k}\eta = \eta_{k,1}\,\null_{k,2}\eta.
\end{equation*}
Moreover, there are $t_{1}\in [t_{0},\infty)$, $\hat{\delta} > 0$ and $\overline{c}_{0}\in\R_{+}$ such that $\null_{k}\eta$ (resp.~$\null_{k}\hat{\eta}$) vanishes a.e.~on $[t_{1},\infty)$ for any $k\in\{1,\dots,l\}$ with $\alpha_{k} + \beta_{k} < 2$ (resp.~$\hat{\alpha}_{k} + \hat{\beta}_{k} < 2$) and
\begin{align*}
\bigg(\int_{t}^{t+\hat{\delta}}\eta_{j,1}^{+}(s)\max\big\{1,\big[\null_{j,2}\eta_{s}\big]_{\frac{pq}{2-\alpha_{j}}}^{q}\big\}\,ds\bigg)\vee\bigg(\int_{t}^{t+\hat{\delta}}\hat{\eta}_{k,1}(s)\max\big\{1,\big[\null_{k,2}\hat{\eta}_{s}\big]_{\frac{pq}{2-\hat{\alpha}_{k}}}^{\frac{q}{2}}\big\}\,ds\bigg)
\end{align*}
is bounded by $\overline{c}_{0}$ for each $t\in [t_{1},\infty)$ and any $j,k\in\{1,\dots,l\}$ with $j\leq l-1$.
\end{enumerate}

Then the pathwise asymptotic behaviour of $Y$ in the next section can be handled with the subsequent $pq$-th moment estimate in a supremum seminorm.

\begin{Proposition}\label{pr:pq-th moment stability estimate in supremum norm}
Let~\eqref{con:assumption 4} hold, $E[|Y_{t_{0}}|_{V_{t_{0}}}^{pq}] < \infty$ and $\sum_{k=1,\,\beta_{k} > 0}^{l}[\null_{k}\eta]_{pq/(2-\alpha_{k})}\theta_{k}^{\beta_{k}}$ be locally integrable and assume there are $\gamma\in\mathscr{L}_{loc}^{1}(\R)$ and $\overline{c}_{\gamma,-1},\overline{c}_{\gamma,0},\overline{c}_{\gamma,q},\hat{c}_{\gamma,0}\in\R_{+}$ so that
\begin{equation*}
\int_{t_{2}}^{t}(p[\null_{l}\eta_{s}]_{\infty} - \gamma)^{+}(s)\,ds \leq \overline{c}_{\gamma,-1},\quad \int_{t_{2}}^{t}(\gamma_{pq,c} - q_{0}\gamma)(s)\,ds \leq \overline{c}_{\gamma,q_{0}},\quad \int_{t_{2}}^{t}\gamma(s)\,ds \leq\hat{c}_{\gamma,0}
\end{equation*}
for any $t_{2},t\in [t_{1},\infty)$ with $t_{2}\leq t < \hat{\delta}$ and each $q_{0}\in\{0,q\}$. Then there is $\overline{c} > 0$ such that
\begin{equation}\label{eq:auxiliary pq-th moment stability estimate}
E\bigg[\sup_{s\in [t_{2},t]} |Y_{s}|_{V_{s}}^{pq}\bigg]^{\frac{1}{q}} \leq \overline{c}\bigg(E\big[|Y_{t_{0}}|_{V_{t_{0}}}^{pq}\big] + \int_{t_{0}}^{t_{1}}\delta_{pq,c}(s)\,ds\bigg)^{\frac{1}{q}}e^{\frac{1}{q}\int_{t_{1}}^{t_{2}}\gamma_{pq,c}(s)\,ds}
\end{equation}
for every $t_{2},t\in [t_{1},\infty)$ with $t_{2} \leq t < \hat{\delta}$.
\end{Proposition}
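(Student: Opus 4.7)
The plan is to combine the pointwise $pq$-th moment bound of Theorem~\ref{th:stability moment estimate} (with $p$ replaced by $pq$) with the supremum-seminorm estimate of Lemma~\ref{le:general pq-th moment stability estimate}, exploiting that~\eqref{con:assumption 4} forces the drift coefficient $\delta_{pq,c}$ from~\eqref{eq:stability drift coefficient} to vanish a.e.~on $[t_{1},\infty)$. Indeed, with $\zeta_{k}=1$ for all $k$ and $\alpha_{l}=2$, the only potentially non-vanishing summands in $\delta_{pq,c}$ come from indices with $\alpha_{k}+\beta_{k}<2$, and for these~\eqref{con:assumption 4} yields $\null_{k}\eta=0$ a.e.~on $[t_{1},\infty)$. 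Applying Theorem~\ref{th:stability moment estimate} first on $[t_{0},t_{1}]$ and then on $[t_{1},s]$ and absorbing the resulting $t_{1}$-dependent prefactor $e^{\int_{t_{0}}^{t_{1}}\gamma_{pq,c}^{+}}$ into a constant $C_{0}$, I obtain
\[
E\bigl[|Y_{s}|_{V_{s}}^{pq}\bigr] \le C_{0}\,e^{\int_{t_{1}}^{s}\gamma_{pq,c}(u)\,du}\,\Bigl(E\bigl[|Y_{t_{0}}|_{V_{t_{0}}}^{pq}\bigr] + \int_{t_{0}}^{t_{1}}\delta_{pq,c}(u)\,du\Bigr)\quad\text{for all } s\in[t_{1},\infty).
\]

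Next I would invoke Lemma~\ref{le:general pq-th moment stability estimate} with the interval $[t_{1},t]$ in the lemma replaced by $[t_{2},t]$ and with the $\gamma$ supplied by the hypotheses. The upper bound $\int_{t_{2}}^{s}\gamma\le\hat{c}_{\gamma,0}$ gives $\sup_{s\in[t_{2},t]}|Y_{s}|_{V_{s}}^{pq}\le e^{q\hat{c}_{\gamma,0}}\sup_{s\in[t_{2},t]}e^{-q\int_{t_{2}}^{s}\gamma}|Y_{s}|_{V_{s}}^{pq}$, so Minkowski's inequality yields
\[
E\bigl[\sup_{s\in[t_{2},t]}|Y_{s}|_{V_{s}}^{pq}\bigr]^{1/q} \le e^{\hat{c}_{\gamma,0}}\Bigl(E\bigl[|Y_{t_{2}}|_{V_{t_{2}}}^{pq}\bigr]^{1/q} + E\Bigl[\Bigl(\sup_{s\in[t_{2},t]}e^{-\int_{t_{2}}^{s}\gamma}|Y_{s}|_{V_{s}}^{p} - |Y_{t_{2}}|_{V_{t_{2}}}^{p}\Bigr)^{q}\Bigr]^{1/q}\Bigr).
\]
The first summand is controlled directly by the display in the previous paragraph applied at $s=t_{2}$, already producing the desired factor $e^{(1/q)\int_{t_{1}}^{t_{2}}\gamma_{pq,c}}(E[|Y_{t_{0}}|^{pq}]+\int_{t_{0}}^{t_{1}}\delta_{pq,c})^{1/q}$ times a constant.

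The bulk of the work is bounding the second summand via $h_{\gamma,p,q}(t_{2},t)$ together with the extra term from Lemma~\ref{le:general pq-th moment stability estimate}. On the interval $[t_{2},t]\subset[t_{1},\infty)$ only indices satisfying $\alpha_{k}+\beta_{k}=2$ (respectively $\hat{\alpha}_{k}+\hat{\beta}_{k}=2$ in the $\null_{k}\hat{\eta}$-sum) survive, and for these the bound $\theta_{k}(s)\le c_{k}E[|Y_{s}|^{pq}]^{1/(pq)}$ is available. Combined with the decompositions $\null_{k}\eta=\eta_{k,1}\,\null_{k,2}\eta$ and $\null_{k}\hat{\eta}=\hat{\eta}_{k,1}\,\null_{k,2}\hat{\eta}$, the H\"older bounds in~\eqref{eq:application of Hoelder's inequality}, and the algebraic identity $\null_{p}\alpha_{k}+\beta_{k}/p=(p-2+\alpha_{k}+\beta_{k})/p=1$ (and its hatted counterpart), each integrand collapses to a constant multiple of $\eta_{k,1}^{+}[\null_{k,2}\eta]_{pq/(2-\alpha_{k})}^{q}\,e^{-q\int_{t_{2}}^{s}\gamma}E[|Y_{s}|^{pq}]$. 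Inserting the pointwise bound from the first display produces the factor $e^{\int_{t_{1}}^{t_{2}}\gamma_{pq,c}}\,e^{\int_{t_{2}}^{s}(\gamma_{pq,c}-q\gamma)}$; the hypothesis $\int_{t_{2}}^{s}(\gamma_{pq,c}-q\gamma)\le\overline{c}_{\gamma,q}$ bounds the exponent, and the~\eqref{con:assumption 4} uniform bound (applicable because $t-t_{2}<\hat{\delta}$) controls the outer $s$-integral of $\eta_{k,1}^{+}[\null_{k,2}\eta]^{q}$ by $\overline{c}_{0}$. The extra term is handled identically with $\int_{t_{2}}^{t}(p[\null_{l}\eta]_{\infty}-\gamma)^{+}\le\overline{c}_{\gamma,-1}$ (from the hypothesis with $q_{0}=0$) playing the role of~\eqref{con:assumption 4}.

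Since every bound carries the common factor $e^{(1/q)\int_{t_{1}}^{t_{2}}\gamma_{pq,c}}(E[|Y_{t_{0}}|^{pq}]+\int_{t_{0}}^{t_{1}}\delta_{pq,c})^{1/q}$, summing and absorbing all numerical constants into a single $\overline{c}$ yields~\eqref{eq:auxiliary pq-th moment stability estimate}. The main technical obstacle is the careful bookkeeping of H\"older exponents inside $h_{\gamma,p,q}$: for each surviving $k$ one must verify that the factor $E[|Y_{s}|^{pq}]^{\null_{p}\alpha_{k}}$ (or its hatted counterpart) coming from~\eqref{eq:application of Hoelder's inequality} combined with the $\theta_{k}^{\beta_{k}q}$-factor always produces exactly $E[|Y_{s}|^{pq}]$ to the first power, which is precisely what ensures that the single exponential propagator $e^{\int_{t_{1}}^{s}\gamma_{pq,c}}$ from the pointwise $pq$-th moment estimate suffices to close the argument.
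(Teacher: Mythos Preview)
Your proposal is correct and follows essentially the same route as the paper: invoke Lemma~\ref{le:general pq-th moment stability estimate} on $[t_{2},t]$, use that under~\eqref{con:assumption 4} only indices with $\alpha_{k}+\beta_{k}=2$ (resp.~$\hat{\alpha}_{k}+\hat{\beta}_{k}=2$) contribute on $[t_{1},\infty)$ so that the H\"older exponent $\null_{p}\alpha_{k}+\beta_{k}/p$ collapses to $1$, feed in the pointwise $pq$-th moment bound from Theorem~\ref{th:stability moment estimate} (which reduces to a pure exponential since $\delta_{pq,c}=0$ a.e.\ on $[t_{1},\infty)$), and absorb all remaining integrals via the hypotheses and~\eqref{con:assumption 4}. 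One small slip: the bound $\int_{t_{2}}^{t}(p[\null_{l}\eta]_{\infty}-\gamma)^{+}\le\overline{c}_{\gamma,-1}$ is a separate hypothesis, not the $q_{0}=0$ case; the $q_{0}=q$ bound $\overline{c}_{\gamma,q}$ is what controls the exponential $e^{\int_{t_{2}}^{s}(\gamma_{pq,c}-q\gamma)}$ inside the extra term as well, while $\overline{c}_{\gamma,0}$ is used by the paper (somewhat redundantly) only when bounding $E[|Y_{t_{2}}|_{V_{t_{2}}}^{pq}]$.
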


\begin{proof}
As $\null_{k}\eta = 0$ (resp.~$\null_{k}\hat{\eta} = 0$) a.e.~on $[t_{1},\infty)$ for any $k\in\{1,\dots,l\}$ with $\alpha_{k} + \beta_{k} < 2$ (resp.~$\hat{\alpha}_{k} + \hat{\beta}_{k} < 2$), it follows from Remark~\ref{re:general pq-th moment estimate remark} together with Lemma~\ref{le:general pq-th moment stability estimate} and Jensen's inequality that
\begin{equation}\label{eq:auxiliary pq-th moment stability estimate 1}
\begin{split}
&E\bigg[\sup_{s\in [t_{2},t]} e^{-q\int_{t_{2}}^{s}\gamma(s_{0})\,ds_{0}}|Y_{s}|_{V_{s}}^{pq}\bigg]^{\frac{1}{q}} \leq E\big[|Y_{t_{2}}|_{V_{t_{2}}}^{pq}\big]^{\frac{1}{q}}\\
&\quad + p \overline{c}_{0}^{1-\frac{1}{q}}\sum_{k=1}^{l-1} c_{k}^{2-\alpha_{k}}\bigg(\int_{t_{2}}^{t}\eta_{k,1}^{+}(s)\big[\null_{k,2}\eta_{s}\big]_{\frac{pq}{2-\alpha_{k}}}^{q}e^{-q\int_{t_{2}}^{s}\gamma(s_{0})\,ds_{0}} E\big[|Y_{s}|_{V_{s}}^{pq}\big]\,ds\bigg)^{\frac{1}{q}} \\
&\quad + \overline{c}_{\gamma,-1}^{1-\frac{1}{q}}\bigg(\int_{t_{2}}^{t}(p\big[\null_{l}\eta_{s}\big]_{\infty} - \gamma)^{+}(s)e^{-q\int_{t_{2}}^{s}\gamma(s_{0})\,ds_{0}}E\big[|Y_{s}|_{V_{s}}^{pq}\big]\,ds\bigg)^{\frac{1}{q}}\\
&\quad + p\overline{c}_{0}^{\frac{1}{2}-\frac{1}{q}}\sum_{k=1}^{l}c_{k}^{1-\frac{\hat{\alpha}_{k}}{2}}\bigg(\overline{w}_{q}\int_{t_{2}}^{t}\hat{\eta}_{k,1}(s)\big[\null_{k,2}\hat{\eta}_{s}\big]_{\frac{pq}{2-\hat{\alpha}_{k}}}^{\frac{q}{2}}e^{-q\int_{t_{2}}^{s}\gamma(s_{0})\,ds_{0}}E\big[|Y_{s}|_{V_{s}}^{pq}\big]\,ds\bigg)^{\frac{1}{q}}.
\end{split}
\end{equation}
In this context, we used that $E[\null_{k}\eta^{q}|Y|_{V}^{\null_{p}\alpha_{0}pq}] \leq [\null_{k}\eta]_{pq/2}^{q}E[|Y|_{V}^{pq}]^{\null_{p}\alpha_{0}}$ for any $k\in\{1,\dots,l\}$ with $\alpha_{k} = 0$ and $\alpha_{p,0}:=(p-2)/p$, by H\oe lder's inequality, and
\begin{equation*}
\frac{p - 2 + \alpha_{0}}{p} + \frac{\beta_{0}}{p} = \frac{2p - 2 + \alpha_{0}}{2p} + \frac{\beta_{0}}{2p} = 1
\end{equation*}
for each $\alpha_{0},\beta_{0}\in [0,1]$ with $\alpha_{0} + \beta_{0} = 1$. Moreover, as $\delta_{pq,c} = 0$ a.e.~on $[t_{1},\infty)$, the moment stability estimate of Theorem~\ref{th:stability moment estimate} gives us that
\begin{equation*}
e^{-q_{0}\int_{t_{2}}^{s}\gamma(s_{0})\,ds_{0} - \overline{c}_{\gamma,q_{0}}} E\big[|Y_{s}|_{V_{s}}^{pq}\big] \leq e^{\int_{t_{0}}^{t_{2}}\gamma_{pq,c}(s_{0})\,ds_{0}}E\big[|Y_{t_{0}}|_{V_{t_{0}}}^{pq}\big] + \int_{t_{0}}^{t_{1}}e^{\int_{s_{0}}^{t_{2}}\gamma_{pq,c}(s_{1})\,ds_{1}}\delta_{pq,c}(s_{0})\,ds_{0}
\end{equation*}
for all $s\in [t_{2},t]$ and $q_{0}\in\{0,q\}$. Thus, the sum of the four right-hand terms in~\eqref{eq:auxiliary pq-th moment stability estimate 1} is bounded by the right-hand expression in~\eqref{eq:auxiliary pq-th moment stability estimate} when $\overline{c}$ is replaced by the constant
\begin{equation*}
\overline{c}_{1} := e^{\frac{1}{q}\int_{t_{0}}^{t_{1}}\gamma_{pq,c}^{+}(s)\,ds}\bigg(e^{\frac{1}{q}\overline{c}_{\gamma,0}} + e^{\frac{1}{q}\overline{c}_{\gamma,q}}\bigg(p\overline{c}_{0}\bigg(\sum_{k=1}^{l-1}c_{k}^{2-\alpha_{k}}\bigg) + \overline{c}_{\gamma,-1} + p\overline{c}_{0}^{\frac{1}{2}}\overline{w}_{q}^{\frac{1}{q}}\sum_{k=1}^{l}c_{k}^{1-\frac{\hat{\alpha}_{k}}{2}}\bigg)\bigg).
\end{equation*}
Because $\exp(-\int_{t_{2}}^{s}\gamma(s_{0})\,ds_{0})\geq \exp(-\hat{c}_{\gamma,0})$ for any $s\in [t_{2},t]$, the claimed bound holds for $\overline{c}:=\exp(\hat{c}_{\gamma,0})\overline{c}_{1}$.
\end{proof}

\subsection{Pathwise asymptotic behaviour}\label{se:4.4}

Finally, we derive a limiting bound for $Y$ under the random seminorm $|\cdot|_{V}$ from the moment estimate of Proposition~\ref{pr:pq-th moment stability estimate in supremum norm}. For this purpose, we recall an application of the Borel-Cantelli Lemma in~\cite{KalMeyPro21}[Lemma 4.11 and Remark 4.12].

Namely, let $A\in\mathscr{F}$ and $X$ be an $\R_{+}$-valued right-continuous process for which there are a strictly increasing sequence $(t_{n})_{n\in\N}$ in $[t_{0},\infty)$ with $\lim_{n\uparrow\infty} t_{n}=\infty$, a sequence $(c_{n})_{n\in\N}$ in $(0,\infty)$, $\hat{c} > 0$ and $\hat{\varepsilon}\in (0,1)$ such that
\begin{equation*}
E\bigg[\sup_{s\in (t_{n},t_{n+1}]}X_{s}\mathbbm{1}_{A}\bigg] \leq \hat{c}c_{n}\quad\text{for all $n\in\N$}\quad\text{and}\quad \sum_{n=1}^{\infty} c_{n}^{\varepsilon} < \infty
\end{equation*}
for each $\varepsilon\in (0,\hat{\varepsilon})$. Then any $(0,\infty)$-valued lower semicontinuous function $\varphi$ on $(t_{1},\infty)$ satisfies
\begin{equation}\label{eq:pathwise result}
\limsup_{t\uparrow\infty} \frac{\log(X_{t})}{\varphi(t)} \leq \limsup_{n\uparrow\infty} \frac{\log(c_{n})}{\inf_{s\in (t_{n},t_{n+1}]}\varphi(s)}\quad\text{a.s.~on $A$.}
\end{equation}

\begin{Theorem}\label{th:pathwise stability}
Let~\eqref{con:assumption 4} be satisfied and $\sum_{k=1,\,\beta_{k} > 0}^{l}[\null_{k}\eta]_{pq/(2-\alpha_{k})}\theta_{k}^{\beta_{k}}$ be locally integrable. Assume that $\gamma_{pq,c} \leq 0$ a.e.~on $[t_{1},\infty)$ and there is an increasing sequence $(t_{n})_{n\in\N\setminus\{1\}}$ in $[t_{1},\infty)$ such that
\begin{equation*}
\sup_{n\in\N} (t_{n+1} - t_{n}) < \hat{\delta},\quad \lim_{n\uparrow\infty} t_{n} = \infty
\end{equation*}
and $\sum_{n=1}^{\infty}\exp(\varepsilon(pq)^{-1}\int_{t_{1}}^{t_{n}}\gamma_{pq,c}(s)\,ds) < \infty$ for all $\varepsilon\in (0,\hat{\varepsilon})$ and some $\hat{\varepsilon}\in (0,1)$. If $E[|Y_{t_{0}}|_{V_{t_{0}}}^{pq}]$ is finite or $\beta=\hat{\beta}=0$, then
\begin{equation*}
\limsup_{t\uparrow\infty} \frac{1}{\varphi(t)}\log\big(|Y_{t}|_{V_{t}}\big)  \leq \frac{1}{pq}\limsup_{n\uparrow\infty}\frac{1}{\varphi(t_{n})}\int_{t_{1}}^{t_{n}}\gamma_{pq,c}(s)\,ds\quad\text{a.s.}
\end{equation*}
for each increasing function $\varphi:[t_{1},\infty)\rightarrow\R_{+}$ that is positive on $(t_{1},\infty)$.
\end{Theorem}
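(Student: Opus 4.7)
The plan is to combine the $pq$-th moment estimate in the supremum seminorm from Proposition~\ref{pr:pq-th moment stability estimate in supremum norm}, applied over each interval $[t_{n},t_{n+1}]$, with the Borel--Cantelli style pathwise result recalled in~\eqref{eq:pathwise result}. The entire hypothesis framework of the theorem has been tailored to this combination.

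First, I would fix a suitable $\gamma\in\mathscr{L}_{loc}^{1}(\R)$ and verify the three uniform-in-$(t_{2},t)$ bounds that the hypothesis of Proposition~\ref{pr:pq-th moment stability estimate in supremum norm} demands. A convenient choice is $\gamma := p[\null_{l}\eta]_{\infty}^{+}$: the truncation $(p[\null_{l}\eta_{s}]_{\infty}-\gamma)^{+}$ then vanishes identically, so one may take $\overline{c}_{\gamma,-1}=0$. The assumption $\gamma_{pq,c}\leq 0$ a.e.~on $[t_{1},\infty)$, together with the uniform length-$\hat{\delta}$ integral bounds supplied by~\eqref{con:assumption 4} for the quantities $\eta_{j,1}^{+}[\null_{j,2}\eta]_{pq/(2-\alpha_{j})}^{q}$ and $\hat{\eta}_{k,1}[\null_{k,2}\hat{\eta}]_{pq/(2-\hat{\alpha}_{k})}^{q/2}$ ($j\leq l-1$), then produces finite constants $\overline{c}_{\gamma,q_{0}}$ and $\hat{c}_{\gamma,0}$ that work uniformly for all $t_{2},t\in[t_{1},\infty)$ with $t_{2}\leq t<t_{2}+\hat{\delta}$.

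Second, I apply Proposition~\ref{pr:pq-th moment stability estimate in supremum norm} with $t_{2}=t_{n}$ and $t=t_{n+1}$, which is legitimate because $\sup_{n\in\N}(t_{n+1}-t_{n})<\hat{\delta}$. Raising~\eqref{eq:auxiliary pq-th moment stability estimate} to the $q$-th power produces a single constant $\tilde{c}>0$ (independent of $n$) such that
\begin{equation*}
E\bigg[\sup_{s\in[t_{n},t_{n+1}]}|Y_{s}|_{V_{s}}^{pq}\bigg]\leq\tilde{c}\exp\bigg(\int_{t_{1}}^{t_{n}}\gamma_{pq,c}(s)\,ds\bigg)\quad\text{for every $n\geq 2$.}
\end{equation*}
I then invoke~\eqref{eq:pathwise result} with $A=\Omega$, $X_{t}:=|Y_{t}|_{V_{t}}^{pq}$, and $c_{n}:=\tilde{c}\exp(\int_{t_{1}}^{t_{n}}\gamma_{pq,c}(s)\,ds)$. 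The summability condition $\sum_{n}c_{n}^{\varepsilon'}<\infty$ is exactly the hypothesis of the theorem upon substituting $\varepsilon'=\varepsilon/(pq)$ with $\hat{\varepsilon}'=\hat{\varepsilon}/(pq)$. Hence~\eqref{eq:pathwise result} yields
\begin{equation*}
\limsup_{t\uparrow\infty}\frac{\log|Y_{t}|_{V_{t}}^{pq}}{\varphi(t)}\leq\limsup_{n\uparrow\infty}\frac{\log c_{n}}{\inf_{s\in(t_{n},t_{n+1}]}\varphi(s)}\quad\text{a.s.}
\end{equation*}
The contribution $\log\tilde{c}/\varphi(t_{n})\to 0$ is absorbed, division by $pq$ is performed, and monotonicity of $\varphi$ is used to pass from $\inf_{s\in(t_{n},t_{n+1}]}\varphi(s)$ to $\varphi(t_{n})$ in the final ratio, yielding the claimed pathwise bound.

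The main obstacle will be the careful book-keeping in the first step: making sure that the chosen $\gamma$ does not feed an unwanted additive term of order $q\gamma$ into $\gamma_{pq,c}-q\gamma$ that would ruin the uniform bound for $\overline{c}_{\gamma,q}$ over length-$\hat{\delta}$ windows, and simultaneously ensuring that $\gamma$ absorbs the $k=l$ contribution in the drift $(VY)'(\dot{V}Y+V\hat{\B})$. A secondary point is handling the alternative ``$\beta=\hat{\beta}=0$'' hypothesis in place of $E[|Y_{t_{0}}|_{V_{t_{0}}}^{pq}]<\infty$: in that regime $\Theta\equiv 0$ and $\delta_{pq,c}$ vanishes on $[t_{1},\infty)$, so one can shift the starting time from $t_{0}$ to $t_{1}$ and absorb an almost-surely finite random initial value $|Y_{t_{1}}|_{V_{t_{1}}}^{pq}$ into $\tilde{c}$ at the price of restricting the Borel--Cantelli conclusion to $n\geq n_{0}(\omega)$, which does not affect the $\limsup$. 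Finally, a minor technicality is the passage from $\inf_{s\in(t_{n},t_{n+1}]}\varphi(s)=\varphi(t_{n}^{+})$ to $\varphi(t_{n})$: since $\log c_{n}\leq 0$ eventually and $\varphi$ is monotone, either a right-continuity argument or the observation that the $\limsup$ is insensitive to the at-most-countably-many discontinuities of $\varphi$ handles this step.
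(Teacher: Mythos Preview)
Your plan for the case $E[|Y_{t_{0}}|_{V_{t_{0}}}^{pq}]<\infty$ is essentially the paper's proof. One clarification worth making explicit: the reason all the uniform constants $\overline{c}_{\gamma,-1}$, $\overline{c}_{\gamma,0}$, $\overline{c}_{\gamma,q}$, $\hat{c}_{\gamma,0}$ come out finite is the identity
\[
\gamma_{pq,c}=pq\Big([\null_{l}\eta]_{\infty}+\sum_{k=1}^{l-1}c_{k}^{2-\alpha_{k}}[\null_{k}\eta]_{pq/(2-\alpha_{k})}\Big)\quad\text{a.e.~on }[t_{1},\infty),
\]
which together with $\gamma_{pq,c}\le 0$ forces $[\null_{l}\eta]_{\infty}\le 0$ a.e.~there. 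In particular your choice $\gamma=p[\null_{l}\eta]_{\infty}^{+}$ equals $0$ a.e.~on $[t_{1},\infty)$, and the paper's choice $\gamma=p[\null_{l}\eta]_{\infty}$ is nonpositive there; both work, but without this observation you cannot bound $\int_{t_{2}}^{t}\gamma$ uniformly from the data in~\eqref{con:assumption 4}, since that condition only controls $\eta_{j,1}^{+}$-integrals for $j\le l-1$.

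The genuine gap is your treatment of the alternative hypothesis $\beta=\hat{\beta}=0$. Proposition~\ref{pr:pq-th moment stability estimate in supremum norm} bounds an \emph{expectation} and the Borel--Cantelli result~\eqref{eq:pathwise result} requires a deterministic constant $\hat{c}$; you cannot ``absorb an almost-surely finite random initial value into $\tilde{c}$'' or introduce a random $n_{0}(\omega)$ within that framework. The paper's device is different: set $A_{k}:=\{|Y_{t_{0}}|_{V_{t_{0}}}\le k\}$ and observe that $Y\mathbbm{1}_{A_{k}}$ is again a random It{\^o} process (drift $\hat{\B}\mathbbm{1}_{A_{k}}$, diffusion $\hat{\Sigma}\mathbbm{1}_{A_{k}}$) satisfying~\eqref{con:assumption 3} and~\eqref{con:assumption 4}. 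The crucial point is that $\beta=\hat{\beta}=0$ makes the constraints $\theta_{k}(s)\le c_{k}E[|Y_{s}|_{V_{s}}^{pq}]^{1/(pq)}$ vacuous, so they transfer to $Y\mathbbm{1}_{A_{k}}$ for free; with $\beta\neq 0$ or $\hat{\beta}\neq 0$ this localization would fail. Since $E[|Y_{t_{0}}\mathbbm{1}_{A_{k}}|_{V_{t_{0}}}^{pq}]<\infty$, the first case applies to each $Y\mathbbm{1}_{A_{k}}$, yielding the pathwise bound a.s.~on $A_{k}$, and $\bigcup_{k}A_{k}=\Omega$ finishes the argument.
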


\begin{proof}
Since $\gamma_{pq,c} = pq([\null_{l}\eta]_{\infty} +  \sum_{k=1}^{l-1}c_{k}^{2-\alpha_{k}}[\null_{k}\eta]_{pq/(2-\alpha_{k})})$ a.e.~on $[t_{1},\infty)$, it follows that $[\null_{l}\eta]_{\infty} \leq 0$ a.e.~on the same interval. Consequently, if $E[|Y_{t_{0}}|_{V_{t_{0}}}^{pq}] < \infty$, then 
\begin{equation*}
E\bigg[\sup_{s\in [t_{n},t_{n+1}]} |Y_{s}|_{V_{s}}^{pq}\bigg] \leq \hat{c}e^{\int_{t_{1}}^{t_{n}}\gamma_{pq,c}(s)\,ds}
\end{equation*}
for each $n\in\N$ and some $\hat{c} > 0$, by taking $\gamma = p[\null_{l}\eta]_{\infty}$ in Proposition~\ref{pr:pq-th moment stability estimate in supremum norm}. Indeed, from Jensen's inequality and~\eqref{con:assumption 4} we immediately obtain that
\begin{equation*}
\int_{t_{2}}^{t}\gamma_{pq,c}(s) - pq\big[\null_{l}\eta_{s}\big]_{\infty}\,ds \leq pq\sum_{k=1}^{l-1}c_{k}^{2-\alpha_{k}}\overline{c}_{0}
\end{equation*}
for any $t_{2},t\in [t_{1},\infty)$ with $t_{2}\leq t < \hat{\delta}$. Next, we suppose that $\beta = \hat{\beta} = 0$ and set $A_{k}:=\{|Y_{t_{0}}|_{V_{t_{0}}}\leq k\}$ for given $k\in\N$. Then Proposition~\ref{pr:pq-th moment stability estimate in supremum norm} yields $\hat{c}_{k} > 0$ such that
\begin{equation*}
E\bigg[\sup_{s\in [t_{n},t_{n+1}]} |Y_{s}\mathbbm{1}_{A_{k}}|_{V_{s}}^{pq}\bigg] \leq \hat{c}_{k}e^{\int_{t_{1}}^{t_{n}}\gamma_{pq,c}(s)\,ds}
\end{equation*}
for all $n\in\N$, as the random It{\^o} process $Y\mathbbm{1}_{A_{k}}$ with drift $\hat{\B}\mathbbm{1}_{A_{k}}$ and diffusion $\hat{\Sigma}\mathbbm{1}_{A_{k}}$ satisfies~\eqref{con:assumption 3} and~\eqref{con:assumption 4}. Hence, in both cases the claimed pathwise inequality follows from the result recalled at~\eqref{eq:pathwise result} and the fact that $\bigcup_{k\in\N} A_{k} = \Omega$.
\end{proof}

\section{Proofs of the main results}\label{se:5}

\subsection{Proofs of the moment estimates, uniqueness and moment stability}\label{se:5.1}

\begin{proof}[Proof of Proposition~\ref{pr:specific abstract second moment estimate}]
Let $\hat{\B}\in\mathscr{S}(\R^{m})$ and $\hat{\Sigma}\in\mathscr{S}(\R^{m\times d})$ be defined via
\begin{equation}\label{eq:specific drift and diffusion coefficients}
\hat{\B}_{s} := \B_{s}(X_{s},P_{X_{s}}) - \tilde{\B}_{s}(\tilde{X}_{s},P_{\tilde{X}_{s}})\quad\text{and}\quad \hat{\Sigma}_{s} :=\Sigma_{s}(X_{s},P_{X_{s}}) - \tilde{\Sigma}_{s}(\tilde{X}_{s},P_{\tilde{X}_{s}}).
\end{equation}
Then $Y$ is a random It{\^o} process with drift $\hat{\B}$ and diffusion $\hat{\Sigma}$ satisfying
\begin{equation*}
2Y'\hat{\B} + |\hat{\Sigma}|^{2} \leq \varepsilon + \eta\rho(|Y|^{2}) + \lambda\varrho\circ\theta^{2}
\end{equation*}
a.e.~on $[t_{0},\infty)$ a.s.~with the measurable function $\theta:=\vartheta(P_{X},P_{\tilde{X}})$. For this reason, the proposition is a special case of Proposition~\ref{pr:abstract second moment estimate} for the constant choice $V=\mathbbm{I}_{m}$.
\end{proof}

\begin{proof}[Proof of Corollary~\ref{co:pathwise uniqueness}]
For both claims in (i) and (ii) let $X$ and $\tilde{X}$ be two solutions to~\eqref{eq:McKean-Vlasov} with $X_{t_{0}} = \tilde{X}_{t_{0}}$ a.s. Suppose first that~\eqref{con:2} holds and the expression
\begin{equation*}
E\big[\lambda\big]\varrho\big(\vartheta(P_{X},P_{\tilde{X}})^{2}\big) + \eta E\big[\rho(|X-\tilde{X}|^{2})\big]\mathbbm{1}_{\R_{+}}\big(\Phi_{\rho}(\infty)\big)
\end{equation*}
is locally integrable. Then Proposition~\ref{pr:specific abstract second moment estimate} gives $E[|X_{t} -\tilde{X}_{t}|^{2}] = 0$ for any $t\in [t_{0},\infty)$, as $\varrho_{0}\in C(\R_{+})$ given by $\varrho_{0}(v) := \rho(v)\vee\varrho(c_{2,\mathscr{P}}^{2}v)$ satisfies $(0,w)\in D_{\varrho_{0}}$ and $\Psi_{\varrho_{0}}(0,w) = 0$ for all $w\in\R_{+}$. Hence, path continuity implies that $X$ and $\tilde{X}$ are indistinguishable.

Now let~\eqref{con:3} hold and set $\tau_{n}:=\inf\{t\in [t_{0},\infty)\,|\,|X_{t}|\geq n\text{ or } |\tilde{X}_{t}|\geq n\}$ for fixed $n\in\N$. Then $X^{\tau_{n}}$ and $\tilde{X}^{\tau_{n}}$ are solutions to~\eqref{eq:McKean-Vlasov} when $\B$ and $\Sigma$ are replaced by the admissible maps $\null_{n}\B$ and $\null_{n}\Sigma$ with values in $\R^{m}$ and $\R^{m\times d}$, respectively, given by
\begin{equation*}
\null_{n}\B_{s}(x) := \hat{\B}_{s}(\varphi_{n}(x))\mathbbm{1}_{\{\tau_{n} > s\}}\quad\text{and}\quad\null_{n}\Sigma_{s}(x) :=\hat{\Sigma}_{s}(\varphi_{n}(x))\mathbbm{1}_{\{\tau_{n} > s\}},
\end{equation*}
where $\varphi_{n}$ is the radial retraction of the ball $\{x\in\R^{m}\,|\,|x|\leq n\}$. This $\R^{m}$-valued map on $\R^{m}$ is defined by $\varphi(x) = x$, if $|x|\leq n$, and $\varphi_{n}(x) = (n/|x|)x$, if $|x| > n$. 

Because~\eqref{con:1} holds when $(\B,\Sigma) = (\tilde{\B},\tilde{\Sigma})=(\null_{n}\B,\null_{n}\Sigma)$, $\varepsilon =0$ and $(\eta,\lambda,\rho)$ $= (\eta_{n},0,\rho_{n})$, Proposition~\ref{pr:specific abstract second moment estimate} yields $X^{\tau_{n}}=\tilde{X}^{\tau_{n}}$ a.s., which implies that $X = \tilde{X}$ a.s.
\end{proof}

\begin{proof}[Proof of Proposition~\ref{pr:specific stability moment estimate}]
We define $\hat{\B}\in\mathscr{S}(\R^{m})$ and $\hat{\Sigma}\in\mathscr{S}(\R^{m\times d})$ by~\eqref{eq:specific drift and diffusion coefficients} and observe that
\begin{equation}\label{eq:stability estimates}
Y'\hat{\B} \leq \sum_{k=1}^{l}\zeta_{k}\,\null_{k}\eta|Y|^{1 + \alpha_{k}}\theta^{\beta_{k}}\quad\text{and}\quad |\hat{\Sigma}|^{2} \leq \sum_{j,k=1}^{l}\hat{\zeta}_{j}\hat{\zeta}_{k}\,\null_{j}\hat{\eta}\,\null_{k}\hat{\eta}|Y|^{\alpha_{j} + \alpha_{k}}\theta^{\beta_{j} + \beta_{k}}
\end{equation}
a.e.~on $[t_{0},\infty)$ a.s.~with $\theta:=\vartheta(P_{X},P_{\tilde{X}})$. So, all claims follow from Theorem~\ref{th:stability moment estimate}, as our considerations succeeding the definitions~\eqref{eq:specific stability coefficient} and~\eqref{eq:drift stability coefficient} explain.

Thereby, Young's inequality shows that the local integrability of $\Theta(\cdot,P_{X},P_{\tilde{X}})$ entails that of $\hat{\zeta}_{j}\hat{\zeta}_{k}\big[\null_{j}\hat{\eta}\,\null_{k}\hat{\eta}\big]_{p/(2-\alpha_{j} - \alpha_{k})}\theta^{\beta_{j} + \beta_{k}}$ for each $j,k\in\{1,\dots,l\}$, since
\begin{equation*}
\big[\null_{j}\hat{\eta}\,\null_{k}\hat{\eta}\big]_{\frac{p}{2-\alpha_{j} - \alpha_{k}}}\leq \big[\null_{j}\hat{\eta}\big]_{\frac{p}{1-\alpha_{j}}}\big[\null_{k}\hat{\eta}\big]_{\frac{p}{1-\alpha_{k}}}\quad\text{and}\quad p\hat{\zeta}_{k}^{2}\leq p-1+\alpha_{k}+\beta_{k} + (1-\alpha_{k}-\beta_{k})\hat{\zeta}_{k}^{\frac{2p}{1-\alpha_{k}-\beta_{k}}}.
\end{equation*}
\end{proof}

\begin{proof}[Proof of Corollary~\ref{co:moment stability}]
Definition~\ref{de:stability} clarifies that both stability claims are immediate consequences of Proposition~\ref{pr:specific stability moment estimate}.
\end{proof}

\begin{proof}[Proof of Corollary~\ref{co:exponential moment stability}]
From Remark~\ref{re:moment convergence} we infer the first two assertions, by replacing $\gamma_{p,c}$ by $\gamma_{p,\mathscr{P}}$ there. Regarding the exponential $p$-th moment stability, we note that
\begin{equation*}
E\big[|X_{t} - \tilde{X}_{t}|^{p}\big] \leq e^{\int_{t_{0}}^{t}\gamma_{p,\mathscr{P}}(s)\,ds}E\big[|X_{t_{0}} - \tilde{X}_{t_{0}}|^{p}\big]
\end{equation*}
for each $t\in [t_{0},\infty)$, according to Proposition~\ref{pr:specific stability moment estimate}. Thus, let us directly exclude the case that $X_{t_{0}} = \tilde{X}_{t_{0}}$ a.s. Then~\eqref{con:6} implies that
\begin{equation*}
\limsup_{t\uparrow\infty}\frac{1}{t^{\alpha_{l}}}\log\big(E\big[|X_{t} -\tilde{X}_{t}|^{p}\big]\big) \leq \limsup_{t\uparrow\infty}\sum_{k=1}^{l}\hat{\lambda}_{k}\frac{(t-s_{k})^{\alpha_{k}} - (t_{1} - s_{k})^{\alpha_{k}}}{t^{\alpha_{l}}} = \hat{\lambda}_{l}
\end{equation*}
and we obtain the first assertion from Remark~\ref{re:exponential moment stability}. To prove the second claim, we may suppose that $l=1$ and readily check that
\begin{equation}\label{eq:exponential moment stability auxiliary estimate}
E\big[|X_{t} - \tilde{X}_{t}|^{p}\big] \leq \hat{c}_{0}e^{\hat{\lambda}_{1}(t-t_{0})^{\alpha_{1}}}E\big[|X_{t_{0}} - \tilde{X}_{t_{0}}|^{p}\big]
\end{equation}
for any $t\in [t_{0},t_{1}]$ with $\hat{c}_{0}:=\max_{t\in [t_{0},t_{1}]}\exp(\int_{t_{0}}^{t}\gamma_{p,\mathscr{P}}(s)\,ds - \hat{\lambda}_{1}(t-t_{0})^{\alpha_{1}})$. To ensure that~\eqref{eq:exponential moment stability auxiliary estimate} is satisfied for all $t\in [t_{0},\infty)$ we take the constant
\begin{equation*}
\hat{c} := \hat{c}_{0}\vee\exp\bigg(\int_{t_{0}}^{t_{1}}\gamma_{p,\mathscr{P}}(s)\,ds - \hat{\lambda}_{1}(t_{1}-s_{1})^{\alpha_{1}}\bigg)
\end{equation*}
instead of $\hat{c}_{0}$, because $\int_{t_{1}}^{t}\gamma_{p,\mathscr{P}}(s)\,ds \leq \hat{\lambda}_{1}((t - s_{1})^{\alpha_{1}} - (t_{1} - s_{1})^{\alpha_{1}})$ for each $t\in [t_{1},\infty)$.
\end{proof}

\subsection{Proofs for pathwise stability and moment growth bounds}\label{se:5.2}

\begin{proof}[Proof of Proposition~\ref{pr:pathwise stability}]
Because $Y$ is random It{\^o} process with drift $\hat{\B}\in\mathscr{S}(\R^{m})$ and diffusion $\hat{\Sigma}\in\mathscr{S}(\R^{m\times d})$ defined via~\eqref{eq:specific drift and diffusion coefficients} for the choice $(\tilde{\B},\tilde{\Sigma}) = (\B,\Sigma)$, it follows from the estimates~\eqref{eq:stability estimates} that the assertion is a special case of Theorem~\ref{th:pathwise stability}.
\end{proof}

\begin{proof}[Proof of Corollary~\ref{co:pathwise stability}]
To show both claims simultaneously, we may argue as in the proof of Corollary~3.19 in~\cite{KalMeyPro21}. Namely, we take $\hat{t}_{1}\in [t_{1},\infty)$ and $\tilde{\delta} > 0$ such that $\gamma_{pq,\mathscr{P}}\leq 0$ a.e.~on $[\hat{t}_{1},\infty)$ and set $t_{n} := \hat{t}_{1} + \tilde{\delta}(n-1)$ for all $n\in\N$. Then
\begin{equation*}
\int_{0}^{\infty}\exp\bigg(\frac{\varepsilon}{pq}\int_{\hat{t}_{1}}^{\hat{t}_{1} + \tilde{\delta} t}\gamma_{pq,\mathscr{P}}(s)\,ds\bigg)\,dt \leq \int_{0}^{\infty}\exp\bigg(\frac{\varepsilon}{pq}\sum_{k=1}^{l}\hat{\lambda}_{k}\int_{\hat{t}_{1}}^{\hat{t}_{1} + \tilde{\delta}t}\alpha_{k}(s-s_{k})^{\alpha_{k-1}}\,ds\bigg)\,dt
\end{equation*}
for any given $\varepsilon > 0$. Since $\hat{\lambda}_{l} < 0$, the integral on the right-hand side is finite, see Lemma~5.1 in~\cite{KalMeyPro21}, for instance. Thus, the integral test for the convergence of series implies that $\sum_{n=1}^{\infty}\exp(\varepsilon(pq)^{-1}\int_{\hat{t}_{1}}^{t_{n}}\gamma_{pq,\mathscr{P}}(s)\,ds) < \infty$.

Consequently,~\eqref{con:9} follows as soon as $\tilde{\delta} < \hat{\delta}$ and Proposition~\ref{pr:pathwise stability} entails that the difference $Y$ of any two solutions $X$ and $\tilde{X}$ to~\eqref{eq:McKean-Vlasov} for which $\Theta(\cdot,P_{X},P_{\tilde{X}})$ is locally integrable satisfies
\begin{equation*}
\limsup_{t\uparrow\infty} \frac{\log(|Y_{t}|)}{t^{\alpha_{l}}} \leq \frac{1}{pq}\limsup_{n\uparrow\infty}\sum_{k=1}^{l}\hat{\lambda}_{k} \frac{(t_{n} - s_{k})^{\alpha_{k}} - (\hat{t}_{1} - s_{k})^{\alpha_{k}}}{t_{n}^{\alpha_{l}}} = \frac{\hat{\lambda}_{l}}{pq}\quad\text{a.s.,}
\end{equation*}
provided $E[|Y_{t_{0}}|^{pq}] < \infty$ or both $\B$ and $\Sigma$ are independent of $\mu\in\mathscr{P}$.
\end{proof}

\begin{proof}[Proof of Lemma~\ref{le:2nd moment growth estimate}]
By hypothesis, $X$ is a random It{\^o} process with drift $\hat{\B}\in\mathscr{S}(\R^{m})$ and diffusion $\hat{\Sigma}\in\mathscr{S}(\R^{m\times d})$ given by $\hat{\B}_{s}:=\B(X_{s},P_{X_{s}})$ and $\hat{\Sigma}_{s}:=\Sigma_{s}(X_{s},P_{X_{s}})$ so that
\begin{equation*}
2X'\hat{\B} + |\hat{\Sigma}|^{2} \leq \kappa + \upsilon\phi(|X|^{2}) + \chi\varphi(\theta^{2})
\end{equation*}
a.e.~on $[t_{0},\infty)$ a.s.~with the measurable function $\theta:=\vartheta(P_{X},\delta_{0})$. For this reason, the lemma is a special case of Proposition~\ref{pr:abstract second moment estimate}.
\end{proof}

\begin{proof}[Proof of Lemma~\ref{le:p-th moment growth estimate}]
We let $\hat{\B}\in\mathscr{S}(\R^{m})$ and $\hat{\Sigma}\in\mathscr{S}(\R^{m\times d})$ be given as in the proof of Lemma~\ref{le:2nd moment growth estimate} and readily see that
\begin{equation*}
X'\hat{\B}\leq \sum_{k=1}^{l}\kappa_{k}\,\null_{k}\upsilon |X|^{1+\alpha_{k}}\vartheta(P_{X},\delta_{0})^{\beta_{k}}\quad\text{and}\quad|\hat{\Sigma}|^{2}\leq \sum_{j,k=1}^{l}\hat{\kappa}_{j}\hat{\kappa}_{k}\,\null_{j}\hat{\upsilon}\,\null_{k}\hat{\upsilon}|X|^{\alpha_{k}}\vartheta(P_{X},\delta_{0})^{\beta_{k}}
\end{equation*}
a.e.~on $[t_{0},\infty)$ a.s. Hence, Theorem~\ref{th:stability moment estimate} yields all assertions.
\end{proof}

\subsection{Proofs for unique strong solutions}\label{se:5.3}

\begin{proof}[Proof of Proposition~\ref{pr:results on mu-SDE}]
(i) Because~\eqref{con:3} is satisfied when $(\B,\Sigma) = (b_{\mu},\sigma_{\mu})$, pathwise uniqueness for~\eqref{eq:mu-SDE} follows from Corollary~\ref{co:pathwise uniqueness}.

(ii) In essence, we may proceed as in the proof of Proposition 3.26 in~\cite{KalMeyPro21}. First, let $\xi$ be essentially bounded. Then Theorem 2.3 in~\cite{IkeWat89}[Chapter IV] yields a local weak solution $\tilde{X}$ to~\eqref{eq:mu-SDE}.

That is, $\tilde{X}$ is an $\R^{m}\cup\{\infty\}$-valued adapted continuous process on a filtered probability space $(\tilde{\Omega},\tilde{\mathscr{F}},(\tilde{\mathscr{F}}_{t})_{t\in\R_{+}},\tilde{P})$ on which there is an $(\tilde{\mathscr{F}}_{t})_{t\in\R_{+}}$-Brownian motion $\tilde{W}$ such that the usual and the following conditions hold:
\begin{enumerate}[(1)]
\item If $(s,\omega)\in [t_{0},\infty)\times\tilde{\Omega}$ satisfies $\tilde{X}_{s}(\omega) = \infty$, then $\tilde{X}_{t}(\omega)=\infty$ for any $t\in [s,\infty)$.
\item $\mathscr{L}(\tilde{X}_{t_{0}}) = \mathscr{L}(\xi)$ and for the supremum $\tau$ of the sequence $(\tau_{n})_{n\in\N}$ of stopping times defined by $\tau_{n}:=\inf\{t\in [t_{0},\infty)\,|\,|\tilde{X}_{t}|\geq n\}$ we have $\tau > t_{0}$ a.s. 
\item $\tilde{X}^{\tau_{n}}$ solves~\eqref{eq:McKean-Vlasov} on $(\tilde{\Omega},\tilde{\mathscr{F}},(\tilde{\mathscr{F}}_{t})_{t\in\R_{+}},\tilde{P})$ relative to $\tilde{W}$ when $\B$ and $\Sigma$ are replaced by the admissible maps $b_{\mu}\mathbbm{1}_{\{\tau_{n} > \cdot\}}$ and $\sigma_{\mu}\mathbbm{1}_{\{\tau_{n} > \cdot\}}$, respectively, for every $n\in\N$.
\end{enumerate}

Clearly,~\eqref{con:det.1} ensures that~\eqref{con:11} holds for $(\B,\Sigma)=(b_{\mu},\sigma_{\mu})$, as $\kappa_{\mu}:=\kappa + \chi\varphi(\vartheta(\mu,\delta_{0})^{2})$ is locally integrable. Consequently, Lemma~\ref{le:2nd moment growth estimate} and Fatou's lemma assert that
\begin{equation*}
\tilde{E}\big[|\tilde{X}_{t}^{\tau}|^{2}\big] \leq\liminf_{n\uparrow\infty}\tilde{E}\big[|\tilde{X}_{t}^{\tau_{n}}|^{2}\big]\leq \Psi_{\phi}\bigg(E\big[|\xi|^{2}\big] + \int_{t_{0}}^{t}\kappa_{\mu}(s)\,ds, \int_{t_{0}}^{t}\upsilon(s)\,ds\bigg)
\end{equation*}
for each $t\in [t_{0},\infty)$, which implies that $\tau=\infty$ and $\tilde{X}\in\R^{m}$ $\tilde{P}$-a.s. Thus, $\tilde{X}\mathbbm{1}_{\{\tau = \infty\}}$ serves as weak solution to~\eqref{eq:deterministic McKean-Vlasov} in the standard sense.

In particular, this derivation applies to the case when $\xi$ is deterministic. Therefore, Remark~2.1 in~\cite{IkeWat89}[Chapter IV] entails that there is a weak solution $X$ to~\eqref{eq:deterministic McKean-Vlasov} with $X_{t_{0}}=\xi$ a.s., regardless of whether $\xi$ is essentially bounded.

(iii) By what we have just shown, pathwise uniqueness for~\eqref{eq:mu-SDE} holds and there exists a weak solution for any $\R^{m}$-valued $\mathscr{F}_{t_{0}}$-measurable random vector used as initial condition. Hence, Theorem 1.1 in~\cite{IkeWat89}[Chapter IV] entails the assertion.
\end{proof}

\begin{proof}[Proof of Theorem~\ref{th:strong existence}]
(i) and (ii) Pathwise uniqueness with respect to $\Theta$ follows from Proposition~\ref{pr:specific stability moment estimate}, as the underlying filtered probability space and Brownian motion were arbitrarily chosen. In particular, there exists at most a unique solution $X$ to~\eqref{eq:deterministic McKean-Vlasov} such that $X_{t_{0}} = \xi$ a.s.~and $E[|X|^{p}]$ is locally bounded.

Next, for any $\mu\in B_{b,loc}(\mathscr{P})$ Proposition~\ref{pr:results on mu-SDE} gives a unique strong solution $X^{\xi,\mu}$ to~\eqref{eq:mu-SDE} with $X_{t_{0}}^{\xi,\mu} = \xi$ a.s., and $E[|X^{\xi,\mu}|^{p}]$ is locally bounded, by Lemmas~\ref{le:2nd moment growth estimate} and~\ref{le:p-th moment growth estimate}. We note that this process is also a strong solution to~\eqref{eq:deterministic McKean-Vlasov} if $\mu$ is a fixed-point of the operator
\begin{equation*}
\Psi:B_{b,loc}(\mathscr{P})\rightarrow B_{b,loc}(\mathscr{P}_{p}(\R^{m})),\quad \Psi(\nu)(t) :=\mathscr{L}(X_{t}^{\xi,\nu}).
\end{equation*}
For given $\mu,\tilde{\mu}\in B_{b,loc}(\mathscr{P})$ we immediately check that~\eqref{con:4} is valid when $(\B,\Sigma)$ and $(\tilde{\B},\tilde{\Sigma})$ are replaced by $(b_{\mu},\sigma_{\mu})$ and $(b_{\tilde{\mu}},\sigma_{\tilde{\mu}})$, respectively. Hence, Proposition~\ref{pr:specific stability moment estimate} implies that
\begin{equation}\label{eq:auxiliary error estimate}
\vartheta_{p}(\Psi(\mu),\Psi(\tilde{\mu}))(t)^{p} \leq E\big[|X_{t}^{\xi,\mu} - X_{t}^{\xi,\tilde{\mu}}|^{p}\big]\leq \int_{t_{0}}^{t}e^{\int_{s}^{t}\gamma_{p}(\tilde{s})\,d\tilde{s}}\delta(s)\vartheta(\mu,\tilde{\mu})(s)^{p}\,ds
\end{equation}
for every $t\in [t_{0},\infty)$. In particular, this shows that there is at most a unique fixed-point of $\Psi$, due to Gronwall's inequality. 

Further, because $B_{b,loc}(\mathscr{P}_{p}(\R^{m}))$ is completely metrisable, the fixed-point theorem for time evolution operators in~\cite{Kal22} yields the existence of a fixed-point and the error estimate~\eqref{eq:error estimate}. Namely, it follows inductively that
\begin{equation}\label{eq:auxiliary error estimate 2}
\sup_{s\in [t_{0},t]}\vartheta_{p}(\mu_{m},\nu_{n})(s) \leq \Delta(t)\sum_{i=n}^{m-1}\bigg(\frac{c_{p,\mathscr{P}}^{i}}{i!}\bigg)^{\frac{1}{p}}\bigg(\int_{t_{0}}^{t}e^{\int_{s}^{t}\gamma_{p}^{+}(\tilde{s})\,d\tilde{s}}\delta(s)\,ds\bigg)^{\frac{i}{p}}
\end{equation}
for any $m,n\in\N$ with $m > n$ and each $t\in [t_{0},\infty)$. Hence, $(\mu_{n})_{n\in\N}$ is a Cauchy sequence in $B_{b,loc}(\mathscr{P}_{p}(\R^{m}))$ and from~\eqref{eq:auxiliary error estimate} we infer that its limit $\mu$ must be a fixed-point of $\Psi$. Now we may take the limit $m\uparrow\infty$ in~\eqref{eq:auxiliary error estimate 2} to get the desired bound~\eqref{eq:error estimate}.

(iii) The set $M_{p}$ is closed and convex, since the estimate in~\eqref{eq:growth estimate} does not depend on $\mu\in B_{b,loc}(\mathscr{P}_{p}(\R^{m}))$. An application of Lemma~\ref{le:p-th moment growth estimate} entails that
\begin{equation}\label{eq:p-th moment measure estimate}
\vartheta_{p}(\Psi(\mu)(t),\delta_{0})^{p} \leq e^{\int_{t_{0}}^{t}f_{p,\mathscr{P},0}(s)\,ds}E\big[|\xi|^{p}\big] + \int_{t_{0}}^{t}e^{\int_{s}^{t}f_{p,\mathscr{P},0}(\tilde{s})\,d\tilde{s}}g_{p,\mu}(s)\,ds
\end{equation}
for every $\mu\in B_{b,loc}(\mathscr{P})$ and all $t\in [t_{0},\infty)$ with the two measurable and locally integrable functions
\begin{equation*}
f_{p,\mathscr{P},0}:= p \sum_{\substack{k=1,\\ \alpha_{k} = 1}}^{l}\upsilon_{k} + \sum_{\substack{k=1,\\ \alpha_{k} < 1}}^{l} (p-1 + \alpha_{k})c_{p,\mathscr{P}}^{\beta_{k}}\upsilon_{k}^{+} + c_{p}\sum_{j,k=1}^{l}(p-2+\alpha_{j}+\alpha_{k})c_{p,\mathscr{P}}^{\beta_{j} + \beta_{k}}\hat{\upsilon}_{j}\hat{\upsilon}_{k}
\end{equation*}
and
\begin{align*}
g_{p,\mu} &:= \sum_{\substack{k=1,\\ \alpha_{k} < 1}}^{l} (1 - \alpha_{k})\vartheta_{p}(\mu,\delta_{0})^{\frac{\beta_{k}}{1-\alpha_{k}}p}c_{p,\mathscr{P}}^{\beta_{k}}\upsilon_{k}^{+}\\
&\quad + c_{p}\sum_{\substack{j,k=1,\\ \alpha_{j} < 1 \text{ or } \alpha_{k} < 1}}^{l}(2 - \alpha_{j} - \alpha_{k})\vartheta_{p}(\mu,\delta_{0})^{\frac{\beta_{j}+\beta_{k}}{2-\alpha_{j}-\alpha_{k}}p}c_{p,\mathscr{P}}^{\beta_{j} + \beta_{k}}\hat{\upsilon}_{j}\hat{\upsilon}_{k}.
\end{align*}
Thereby, we used the fact that $(b_{\mu},\sigma_{\mu})$ satisfies~\eqref{con:12} for the choice $\beta = 0$. Next, Young's inequality gives us that
\begin{equation*}
(1-\alpha_{k})\vartheta_{p}(\mu,\delta_{0})^{\frac{\beta_{k}}{1-\alpha_{k}}p} \leq 1 - \alpha_{k} - \beta_{k} + \beta_{k}\vartheta_{p}(\mu,\delta_{0})^{p}
\end{equation*}
for every $k\in\{1,\dots,l\}$ with $\alpha_{k} < 1$ and
\begin{equation*}
(2 - \alpha_{j} - \alpha_{k})\vartheta_{p}(\mu,\delta_{0})^{\frac{\beta_{j} + \beta_{k}}{2-\alpha_{j}-\alpha_{k}}p} \leq 2 - \alpha_{j} - \alpha_{k} - \beta_{j} - \beta_{k} + (\beta_{j}+\beta_{k})\vartheta_{p}(\mu,\delta_{0})^{p}
\end{equation*}
for any $j,k\in\{1,\dots,l\}$ with $\alpha_{j} < 1$ or $\alpha_{k} < 1$. From these estimates we infer that $f_{p,\mathscr{P},1} := f_{p,\mathscr{P}} - f_{p,\mathscr{P},0}$ satisfies
\begin{equation*}
g_{p,\mu} \leq g_{p,\mathscr{P}} + f_{p,\mathscr{P},1}\vartheta_{p}(\mu,\delta_{0})^{p}.
\end{equation*}
So, the inequality~\eqref{eq:p-th moment measure estimate}, the Fundamental Theorem of Calculus for Lebesgue-Stieltjes integrals and Fubini's theorem show that $\Psi$ maps $M_{p}$ into itself, which implies the claim.
\end{proof}

\let\OLDthebibliography\thebibliography
\renewcommand\thebibliography[1]{
  \OLDthebibliography{#1}
  \setlength{\parskip}{0pt}
  \setlength{\itemsep}{2pt}}


\end{document}